\documentclass[pdflatex,sn-mathphys-num]{sn-jnl}

\usepackage{geometry}
\usepackage{graphicx}%
\usepackage{threeparttable}
\usepackage{multirow}%
\usepackage{amssymb,amsmath,amsfonts,amsthm,mathrsfs,mathtools}
\usepackage{bm}
\usepackage[title]{appendix}%
\usepackage{xcolor}
\usepackage{textcomp}
\usepackage{manyfoot}
\usepackage{booktabs}%
\usepackage{algorithm}%
\usepackage{algorithmicx}%
\usepackage{algpseudocode}%
\usepackage{listings}%
\usepackage{enumitem}
\numberwithin{equation}{section}

\usepackage{tabularx}
\usepackage{microtype}
\usepackage{ragged2e}

\definecolor{journalblue}{RGB}{22,58,110}
\definecolor{accentteal}{RGB}{0,112,118}
\definecolor{pitchgray}{RGB}{78,84,92}
\definecolor{rulegray}{RGB}{190,197,205}
\definecolor{panelblue}{RGB}{239,244,250}

\theoremstyle{thmstyleone}%
\newtheorem{theorem}{Theorem}[section]
\newtheorem{proposition}[theorem]{Proposition}

\newtheorem{corollary}[theorem]{Corollary}
\theoremstyle{definition}

\newtheorem{assumption}[theorem]{Assumption}

\theoremstyle{remark}
\newtheorem{remark}[theorem]{Remark}

\newcommand{\R}{\mathbb{R}}
\newcommand{\dd}{\,\mathrm{d}}
\newcommand{\e}[1]{{\times}10^{#1}}
\DeclareMathOperator{\dist}{dist}
\DeclareMathOperator{\diag}{diag}
\DeclareMathOperator{\Var}{Var}

\begin{document}
\small
\title{A transport-only null model for apparent heterogeneity in diffusively dosed organoid arrays}

\author[1]{\fnm{Jiguang} \sur{Yu}}
\email{jyu678@bu.edu}
\equalcont{These authors contributed equally to this work.}

\author*[2]{\fnm{Louis Shuo} \sur{Wang}}
\email{wang.s41@northeastern.edu}
\equalcont{These authors contributed equally to this work.}

\affil[1]{
  \orgdiv{College of Engineering}, 
  \orgname{Boston University}, 
  \orgaddress{
    \city{Boston}, 
    \state{MA}, 
    \postcode{02215}, 
    \country{United States}
  }
}

\affil[2]{
  \orgdiv{Department of Mathematics}, 
  \orgname{Northeastern University}, 
  \orgaddress{
    \city{Boston}, 
    \state{MA}, 
    \postcode{02115}, 
    \country{United States}
  }
}

\abstract{
Spatial transport can create apparent biological heterogeneity even when organoids are intrinsically identical. We develop a transport-to-phenotype null model for diffusively dosed liver-cancer organoid arrays. The model couples bulk diffusion and clearance to partially accessible adsorption, reversible surface residence, productive internalization, and intracellular state dynamics. Matched asymptotics reduce the perforated-domain problem to a Green-function system, while renewal resolvents describe desorption, re-adsorption, and residence-time effects. Across $2000$ random ten-organoid arrays with localized dosing, the predicted transport-only maturation coefficient of variation has median $0.623$; one-factor design changes move this median between $0.27$ and $0.86$. After matching array-mean exposure, distributed dosing reduces the baseline spread approximately fivefold. The analysis also shows that, in a conservative reflecting chamber, desorption changes uptake timing and allocation but not total eventual uptake; reductions in total uptake require a competing loss channel. Residence laws with equal means can nevertheless produce different transient phenotypes. The spatial reduction is verified against finite-element solutions of the full PDE, and the time reconstruction against numerical Laplace inversion. Finally, a large-batch theorem shows that increasing batch size averages independent process variation but not shared line or batch effects. The framework provides a geometry-specific null against which measured organoid heterogeneity can be assessed.
}

\keywords{
organoid arrays; 
transport-induced heterogeneity;
diffusion-mediated uptake; 
partially reactive boundaries; 
renewal theory;
matched asymptotics; 
dosing geometry; 
liver cancer organoids}

\pacs[MSC Classification]{92C45; 35B25, 60J70, 60K05, 92C37}

\maketitle

\section{Introduction}
\label{sec:introduction}

Patient-derived liver-cancer organoids are increasingly used to study tumour biology and treatment response \cite{broutier2017human,artegiani2019probing,wang2025analysis,sun2019modelling,tang2022human,drost2017translational,zhao2022organoids,fatehullah2016organoids,hofer2021engineering}. When nominally identical organoids are subjected to the same dosing protocol, however, their measured phenotypes can differ substantially. Such variation may reflect biological heterogeneity, measurement error, or unequal intracellular exposure. The third mechanism is easily overlooked: organoids compete for a shared diffusible pool, and their positions relative to the dosing source, the chamber boundary, and one another determine how much material reaches and enters each compartment. Consequently, spatially heterogeneous delivery can produce heterogeneous phenotypes even when every organoid has identical biological parameters.

Extracellular concentration is not itself a measure of biologically effective exposure. A soluble molecule must diffuse to an organoid, encounter an accessible surface site, remain bound for a finite residence time, and then either desorb or be productively internalized \cite{lauffenburger1996receptors,wang2025analysis1}. Reaction--diffusion and cellular-pharmacology models have long coupled diffusion to binding, internalization, and trafficking \cite{rippley1995effects,liang2025global,wenning1999coupled}; tumour-spheroid and hepatic models have additionally resolved penetration, membrane transport, and metabolism \cite{goodman2008spatiotemporal,gao2013predictive,wang2026algebraic,leedale2020multiscale}. Restricted penetration is especially relevant to dense liver-tumour tissue \cite{hernandez2013role,jain2014role,wang2026damage,minchinton2006drug}. What is still lacking for organoid-array experiments is a usable transport-only null: given measured organoid centres and a specified delivery protocol, how much between-organoid variation can transport generate when the biological and kinetic parameters are held fixed?

We construct such a null by coupling diffusion in a bounded chamber to encounter-dependent adsorption, reversible surface residence, productive internalization, and a five-dimensional cell-state model. The biologically effective input is the productive internalization flux, rather than the extracellular concentration or the first surface encounter. In two dimensions, matched asymptotics reduce the perforated-domain problem to an $N\times N$ Green-function system evaluated at the organoid centres; the corresponding three-dimensional reduction is expressed through monopole strengths and shape-dependent capacitances. Renewal equations then compose repeated adsorption, residence, desorption, and re-adsorption cycles, including a parameter that interpolates between well-mixed resetting and spatially
continued search. The resulting fluxes drive a positively invariant finite-dimensional phenotype system and, subsequently, a nested line--batch--process acceptance model.

For the specified ensemble of 2000 ten-organoid arrays dosed from a localized port, the model predicts a median transport-only maturation coefficient of variation of $62\%$ and a median disease-score coefficient of variation of $84\%$. In a one-factor-at-a-time design audit, the median maturation coefficient of variation ranges from $27\%$ to $86\%$. These values are conditional predictions of the stated model and layout law, not empirical estimates or universal decision thresholds. Their purpose is to define a layout-specific reference against which observed heterogeneity can be tested. A measurement that exceeds this reference is not explained by the specified transport-only model; failure to exceed it does not establish biological homogeneity.

The analysis also resolves four mechanistic ambiguities. First, in a conservative chamber with reflecting walls and no bulk loss, the total eventual productive-uptake probability satisfies $\sum_j\pi_{jm}=1$; for a single target, $\pi_{1m}=1$. Desorption can delay delivery and redistribute uptake among competing targets, but it reduces total eventual uptake only through interaction with clearance or another loss channel. Second, residence-time laws with the same mean and the same per-encounter internalization probability can produce different uptake histories and phenotypes. The transient therefore depends on
the shape of the residence law, not only on its mean; this comparison is made by numerical inversion of the exact transform rather than by a two-moment surrogate. Third, raw uptake does not isolate competition when changing target spacing also changes source distance. For the symmetric two-target reduction, a competition-normalized ratio cancels the forcing exactly and depends only on the target--target interaction. Fourth, reset and continued-search protocols can be distinguished from endpoint allocations primarily when retention is heterogeneous; time-resolved uptake is generically informative even under
homogeneous retention.

Renewal theory for partially reactive targets, encounter-based adsorption, Dirichlet-to-Neumann reductions, and narrow-capture asymptotics are established methods \cite{bressloff2025diffusion,bressloff2025asymptotic,
bressloff2026renewal,bressloff2022spectral,bressloff2025random,wang2026breakdown,
bressloff2021asymptotic,grebenkov2020diffusion}. In particular, the target-to-target re-adsorption matrix, its Neumann resummation, and its matched-asymptotic evaluation are inherited from
\cite{bressloff2025asymptotic}. The reset--continued-search interpolation is likewise built from standard resetting kernels \cite{kusmierz2014first,riascos2020random,gonzalez2021diffusive,yu2026rigorous,
janson2012hitting}. We use these ingredients rather than claim them individually. Our contribution is their composition into a verified transport-to-phenotype null model for liver-cancer organoid arrays, together with the theoretical, numerical, and experimental-design consequences of that composition.

\paragraph{Contributions.}
\begin{enumerate}[itemsep=2pt]
\item We formulate a transport-only null that maps measured organoid centres and dosing geometry to productive uptake and phenotype variation under identical biology, and give a direct recipe for comparison with an observed array (\S\ref{sec:null}).

\item We prove that spatially uniform dosing makes the transformed forcing identical at every target in the reduced model. After matching array-mean exposure, uniform dosing reduces the median maturation spread by a factor of $5.0$ in the baseline ensemble; the corresponding point-to-uniform ratio ranges
from $2.1$ to $8.4$ across the tested designs (\S\ref{subsec:dosing_design}).

\item We separate conservative uptake from loss-mediated retention, quantify residence-law-shape effects, isolate competition from source distance, and derive and exercise an identifiability result for partial remixing (Corollary~\ref{cor:remix} and \S\ref{subsec:retention}).

\item We verify the spatial reduction against an exact radial benchmark and full finite-element solutions of interacting perforated-domain problems with $N=2,5,10$ targets in conservative and lossy regimes. For circular targets the productive-flux error is $O(\varepsilon^2)$. We separately validate the time-domain reconstruction by numerical Laplace inversion and audit parameter uncertainty using Latin hypercube sampling, partial rank correlations, and Sobol indices (\S\ref{subsec:verify}--\ref{subsec:gammaverify} and \S\ref{subsec:sens}).

\item We propagate transport and biological uncertainty to a batch-acceptance criterion. Increasing batch size averages only independent within-batch variation; shared line and batch effects yield a generally nondegenerate large-batch acceptance probability (\S\ref{sec:acceptance}).
\end{enumerate}

The model is intended as a testable baseline rather than a calibrated digital twin. Its parameters are literature-scaled, and quantitative application to a specific organoid line requires calibration and uncertainty propagation. The central question is deliberately narrower: before observed heterogeneity is
attributed to biology, how much can the measured geometry and delivery protocol already explain?

\section{The model}
\label{sec:model}

Let $\Omega\subset\R^d$, $d=2,3$, be a bounded extracellular domain containing $N$ small, well-separated compartments $\mathcal U_j=\{\mathbf x:|\mathbf x-\mathbf x_j|<\varepsilon\ell_j\}$, $j=1,\dots,N$, with $0<\varepsilon\ll1$; the extracellular region is $\Omega_\varepsilon=\Omega\setminus\bigcup_j\mathcal U_j$. For species $m=1,\dots,M$ let $u_m(\mathbf x,t)$ be the extracellular concentration. The internal state of compartment $j$ is $\boldsymbol\upsilon_j=(\upsilon_{j,\mathrm{pr}},\upsilon_{j,\mathrm{df}}, \upsilon_{j,\mathrm{mt}},\upsilon_{j,\mathrm{ap}},\upsilon_{j,\mathrm{ds}})$: progenitor, differentiated and mature fractions, an apoptotic/stressed fraction, and a disease-phenotype score.

\subsection{Bulk transport and encounter-dependent adsorption}
In $\Omega_\varepsilon$, 
\begin{equation}
\partial_tu_m=D_m\Delta u_m-\gamma_mu_m+I_m(\mathbf x,t),\qquad
D_m\nabla u_m\cdot\mathbf n=0\ \text{ on }\partial\Omega,
\label{eq:bulk}
\end{equation}
with initial data $u_m(\cdot,0)=u_{m,0}$ and $q_{jm}(\cdot,0)=q_{jm,0}$, where $q_{jm}$ is the surface-bound density on $\partial\mathcal U_j$ introduced next.

Let $\mathbf n_j$ be the unit normal on $\partial\mathcal U_j$ directed into the extracellular domain. For $\mathbf y\in\partial\mathcal U_j$, 
\begin{subequations}\label{eq:surface}
\begin{align}
D_m\nabla u_m\cdot\mathbf n_j&=\kappa_{jm}u_m-\gamma^d_{jm}q_{jm},\\
\partial_tq_{jm}&=\kappa_{jm}u_m-(\gamma^d_{jm}+\bar\gamma_{jm})q_{jm},
\end{align}
\end{subequations}
with adsorption rate $\kappa_{jm}$, desorption rate $\gamma^d_{jm}$ and productive internalization rate $\bar\gamma_{jm}$. The flux balance equates the diffusive flux leaving the fluid to the net exchange; the term $\bar\gamma_{jm}q_{jm}$ removes bound molecules without returning them, and that asymmetry is the modelling distinction on which everything else rests. With amount in moles and length in $L$, $[u_m]=\mathrm{mol}\,L^{-d}$, $[q_{jm}]=\mathrm{mol}\,L^{-(d-1)}$, $[\kappa_{jm}]=LT^{-1}$, and $[\gamma_m]=[\gamma^d_{jm}]=[\bar\gamma_{jm}]=T^{-1}$.

The biologically effective input is the productive internalization flux
\begin{equation*}
\mathcal J_{jm}(t)=\bar\gamma_{jm}\int_{\partial\mathcal U_j}q_{jm}\dd S,
\qquad[\mathcal J_{jm}]=\mathrm{mol}\,T^{-1}.
\end{equation*}

\begin{remark}[Mass balance]\label{rem:mass}
With $B_m^{\mathrm{bulk}}=\int_{\Omega_\varepsilon}u_m$, $B_{jm}^{\mathrm{surf}}=\int_{\partial\mathcal U_j}q_{jm}$ and $C_{jm}^{\mathrm{int}}=\int_0^t\mathcal J_{jm}$, the divergence theorem and \eqref{eq:surface} give $\dot B_{jm}^{\mathrm{surf}}=\kappa_{jm}\int_{\partial\mathcal U_j}u_m-(\gamma^d_{jm}+\bar\gamma_{jm})B_{jm}^{\mathrm{surf}}$ and $\dot C_{jm}^{\mathrm{int}}=\bar\gamma_{jm}B_{jm}^{\mathrm{surf}}$; adsorption and desorption cancel on summation, so $\mathcal S_m=B_m^{\mathrm{bulk}}+\sum_jB_{jm}^{\mathrm{surf}}$ obeys 
\begin{equation}
\dot{\mathcal S}_m=\int_{\Omega_\varepsilon}I_m-\gamma_m\!\int_{\Omega_\varepsilon}\!u_m-\sum_j\mathcal J_{jm}.
\label{eq:massbal}
\end{equation}
After a pulse with $I_m=0$, $\mathcal S_m$ is nonincreasing, whereas the free state need not be nonincreasing, because desorption returns molecules to the free state. 
\end{remark}

We introduce the encounter-dependent generalization.
A constant adsorption rate $\kappa_{jm}$ in \eqref{eq:surface} implicitly assumes memoryless surface contacts. To incorporate progressive receptor activation or surface remodelling, we follow \cite{bressloff2025diffusion,yu2026beyond} by introducing the boundary local time $\mathcal L^\partial_j(t)$ at $\partial\mathcal U_j$. Adsorption occurs when $\mathcal L^\partial_j(t)$ exceeds an independent threshold $\widehat\Lambda^\partial_{jm}$ with survival function $\Psi^\partial_{jm}(\lambda)$ and density $\psi^\partial_{jm}(\lambda)$. An exponential threshold $\Psi^\partial_{jm}(\lambda)=e^{-\kappa_{jm}\lambda/D_m}$ recovers the classical Robin law \eqref{eq:surface}, whereas a non-exponential threshold captures non-Markovian encounter dynamics. Spectrally, if $\mu_{n,m}(s)$ is a Dirichlet-to-Neumann eigenvalue for an isolated extended target \cite{bressloff2022spectral,yu2026microscopic}, the standard Robin factor $\vartheta_{n,jm}(s)=(\kappa_{jm}/D_m)/(\mu_{n,m}(s)+\kappa_{jm}/D_m)$ is replaced by $\vartheta^\partial_{n,jm}(s)=\widetilde\psi^\partial_{jm}(\mu_{n,m}(s))$, which reduces to the former in the exponential case. Stochastically gated accessibility is a distinct mechanism acting before adsorption; no result below uses a gated calculation, and we collect it in Appendix~\ref{app:gating}.

\subsection{Surface residence, re-adsorption, and resetting}
\label{subsec:surface_kinetics}

After adsorption, a molecule stays bound for a random residence time with density $\phi_{jm}$, after which it either desorbs with probability $\sigma_{jm}$ or is productively internalized with probability $1-\sigma_{jm}$. In the Markovian limit, $\phi_{jm}(\tau)=(\gamma^d_{jm}+\bar\gamma_{jm})e^{-(\gamma^d_{jm}+\bar\gamma_{jm})\tau}$ and $\sigma_{jm}=\gamma^d_{jm}/(\gamma^d_{jm}+\bar\gamma_{jm})$, so that
\begin{equation}
\langle\tau\rangle_{jm}=\frac1{\gamma^d_{jm}+\bar\gamma_{jm}},\qquad
\gamma^d_{jm}=\frac{\sigma_{jm}}{\langle\tau\rangle_{jm}},\qquad
\bar\gamma_{jm}=\frac{1-\sigma_{jm}}{\langle\tau\rangle_{jm}} .
\label{eq:kinetic}
\end{equation}

More generally, multistate transient kinetics can be represented via an underlying Markov chain. If an adsorbed molecule enters the first of $K_{jm}$ transient states with generator $\mathbf K^{\mathrm{res}}_{jm}$ (where $\mathbf 1^\top\mathbf K^{\mathrm{res}}_{jm}+(\mathbf r^d_{jm}+\mathbf r^i_{jm})^\top=\mathbf 0^\top$) and exit-rate vectors $\mathbf r^{d}_{jm}$, $\mathbf r^{i}_{jm}$, then the Laplace-transformed residence densities are $\widetilde\phi^{x}_{jm}(s)=(\mathbf r^{x}_{jm})^\top(s\mathbf I-\mathbf K^{\mathrm{res}}_{jm})^{-1}\mathbf e_1$ for $x\in\{d,i\}$.

We introduce spatial rebinding and search interpolation.
When a desorbed molecule leaves the surface, it re-enters the fluid to resume searching. Under complete, well-mixed resetting, the molecule forgets its previous encounter location. Conditioning on the first search cycle with kernels $\mathfrak S_{jm}=(1-\sigma_{jm})\phi_{jm}*J_{jm}$ and $\mathfrak F_m=\sum_k\sigma_{km}\phi_{km}*J_{km}$ gives $\mathcal J_{jm}=\mathfrak S_{jm}+\mathfrak F_m*\mathcal J_{jm}$, yielding the transformed productive flux
\begin{equation}
\widetilde{\mathcal J}_{jm}(s)
=\frac{(1-\sigma_{jm})\widetilde\phi_{jm}(s)\widetilde J_{jm}(s)}
{1-\sum_{k}\sigma_{km}\widetilde\phi_{km}(s)\widetilde J_{km}(s)} ,
\label{eq:reset}
\end{equation}
where $J_{jm}$ is the irreversible adsorption flux, and $\widetilde J_{jm}(0)=\overline\pi_{jm}$ is the first-adsorption splitting probability.

To retain spatial memory of the encounter location, let $P_{jk,m}(s)$ be the transformed irreversible adsorption flux into organoid $j$ for a search originating from a surface-averaged desorption point on organoid $k$; the surface average is the leading monopole approximation, exact to $O(\varepsilon)$ for
circular targets with homogeneous surface kinetics. Defining $\bm\Sigma_m=\diag(\sigma_{jm})$, $\bm W_m=\mathbf I-\bm\Sigma_m$, and $\bm\Phi_m(s)=\diag(\widetilde\phi_{jm}(s))$, summing over successive desorption and rebinding events gives \cite{bressloff2025asymptotic,cai2026optimal}
\begin{equation}
\widetilde{\bm{\mathcal J}}^{\mathrm{cont}}_m
=\bm W_m\bm\Phi_m\bigl[\mathbf I-\mathbf P_m\bm\Sigma_m\bm\Phi_m\bigr]^{-1}
\widetilde{\mathbf J}_m ,
\label{eq:cont}
\end{equation}
which converges because $\|\mathbf P_m(s)\bm\Sigma_m\|_1\le\max_k\sigma_{km}<1$ for $s\ge0$: the entries of $\mathbf P_m(s)$ are Laplace transforms of nonnegative sub-probability densities, so its column sums are bounded by their values at $s=0$. Equality $\|\mathbf P_m(s)\bm\Sigma_m\|_1 =\max_k\sigma_{km}$ is in general justified only at $s=0$ under Assumption~\ref{ass:cons}, where $\mathbf P_m(0)$ is column stochastic; for $s>0$, or in the lossy case, the column sums are strictly smaller and the bound is strict.  

To account for partial spatial remixing near organoid surfaces, we interpolate between well-mixed resetting ($\mathbf P^{\mathrm{reset}}_m(s)=\widetilde{\mathbf J}_m(\mathbf x_0,s)\mathbf1^\top$) and spatial continuation \eqref{eq:cont} via
\begin{equation*}
\mathbf P^{(\eta)}_m(s)=(1-\eta_m)\mathbf P^{\mathrm{reset}}_m(s)
+\eta_m\mathbf P^{\mathrm{cont}}_m(s),\qquad 0\le\eta_m\le1 ,
\end{equation*}
where $1-\eta_m$ represents the fraction of dissociations leading to true separation \cite{goldstein1995approximating,goldstein1999influence,wang2026elliptic}. For well-separated two-dimensional targets ($\nu=-1/\log\varepsilon$), same-organoid rebinding scales as $P_{kk,m}(0)=1-O(\nu)$ while inter-target transfer scales as $P_{jk,m}(0)=O(\nu)$ ($j\neq k$) \cite{bressloff2025asymptotic,liu2025bidirectional}. Thus, local re-adsorption remains $O(1)$ and cannot be neglected as $\varepsilon\to0$.

\subsection{Intracellular state dynamics and phenotype coupling}
\label{subsec:state}

Productive internalization provides the coupling from extracellular transport to the intracellular phenotype. In general, we write the state dynamics of organoid $j$ as
\begin{equation*}
\frac{\dd\boldsymbol\upsilon_j}{\dd t}
=
\mathbf G\!\left(
\boldsymbol\upsilon_j,
\{\mathcal J_{jm}(t)\}_{m=1}^{M};
\boldsymbol\theta_j
\right),
\end{equation*}
where $\boldsymbol\theta_j$ collects the biological parameters and $\mathbf G$ is locally Lipschitz in the state and may depend nonlinearly on the productive fluxes. This formulation includes both flux-linear responses and the saturating Hill response used below.

If the model contains a single active species, the species index $m$ is suppressed and we write $\mathcal J_j(t)$, or simply $\mathcal J(t)$ when the organoid index is fixed. If several active species contribute to a common phenotypic response, the effective input must be defined explicitly. Here we use the weighted total productive flux
\begin{equation*}
\mathcal J_j(t)
=
\sum_{m=1}^{M}\omega_m\mathcal J_{jm}(t),
\qquad
\omega_m\geq0,
\end{equation*}
where the fixed weights $\omega_m$ account for relative potency and, when necessary, unit conversion. The single-species case corresponds to $M=1$ and $\omega_1=1$.

The concrete model used throughout consists of the maturation cascade
\[
\upsilon_{j,\mathrm{pr}}
\longrightarrow
\upsilon_{j,\mathrm{df}}
\longrightarrow
\upsilon_{j,\mathrm{mt}},
\]
coupled to an apoptotic/stress burden $\upsilon_{j,\mathrm{ap}}$ and a dimensionless disease score $\upsilon_{j,\mathrm{ds}}$: 
\begin{subequations}
\label{eq:transformed_problem}
\begin{align}
\dot\upsilon_{j,\mathrm{pr}}
&=
r_{\mathrm{grow}}\upsilon_{j,\mathrm{pr}}
\left[
1-
\left(
\upsilon_{j,\mathrm{pr}}
+\upsilon_{j,\mathrm{df}}
+\upsilon_{j,\mathrm{mt}}
\right)
\right]
-
(k_{PD}+b_{PD}\mathcal J_j)\upsilon_{j,\mathrm{pr}},\\
\dot\upsilon_{j,\mathrm{df}} &=
(k_{PD}+b_{PD}\mathcal J_j)\upsilon_{j,\mathrm{pr}}
-
(k_{DM}+b_{DM}\mathcal J_j)\upsilon_{j,\mathrm{df}}
-
d_0\upsilon_{j,\mathrm{ds}}\upsilon_{j,\mathrm{df}},\\
\dot\upsilon_{j,\mathrm{mt}} &=
(k_{DM}+b_{DM}\mathcal J_j)\upsilon_{j,\mathrm{df}}
-
d_0\upsilon_{j,\mathrm{ds}}\upsilon_{j,\mathrm{mt}}, \label{eq:state}\\
\dot\upsilon_{j,\mathrm{ap}} &=
d_0\upsilon_{j,\mathrm{ds}}
\left(
\upsilon_{j,\mathrm{df}}
+\upsilon_{j,\mathrm{mt}}
\right)
+
k_{\mathrm{stress}}\upsilon_{j,\mathrm{ds}}
-
k_{\mathrm{ap,clr}}\upsilon_{j,\mathrm{ap}},\\
\dot\upsilon_{j,\mathrm{ds}} &=
\alpha_Q\upsilon_{j,\mathrm{ds}}
(1-\upsilon_{j,\mathrm{ds}})
-
\alpha_R
\frac{\mathcal J_j^{\,n_{\mathrm H}}}
{\Theta^{n_{\mathrm H}}+\mathcal J_j^{\,n_{\mathrm H}}}
\upsilon_{j,\mathrm{ds}}.
\end{align}
\end{subequations}
The first three equations describe proliferation followed by irreversible differentiation and maturation. Productive uptake increases both transition rates through $b_{PD}\mathcal J_j$ and $b_{DM}\mathcal J_j$, whereas the disease score induces loss of differentiated and mature cells. The fourth equation records the resulting apoptotic/stress burden and its clearance. The last equation combines logistic disease progression with a saturating rescue response. The Hill function
\[
H(\mathcal J_j)
=
\frac{\mathcal J_j^{\,n_{\mathrm H}}}
{\Theta^{n_{\mathrm H}}+\mathcal J_j^{\,n_{\mathrm H}}}
\]
has half-maximal response at $\mathcal J_j=\Theta$, maximal rescue rate $\alpha_R$, and steepness controlled by $n_{\mathrm H}$.

The rescue term depends on productive intracellular uptake $\mathcal J_j$, not directly on the extracellular concentration $u_m$. This is the central transport-to-phenotype assumption: extracellular availability affects the state only after arrival, adsorption, surface processing, and productive internalization have occurred.

For completeness, let
\[
S_j
=
\upsilon_{j,\mathrm{pr}}
+\upsilon_{j,\mathrm{df}}
+\upsilon_{j,\mathrm{mt}}.
\]
Summing the first three equations gives
\begin{equation*}
\dot S_j
=
r_{\mathrm{grow}}\upsilon_{j,\mathrm{pr}}(1-S_j)
-
d_0\upsilon_{j,\mathrm{ds}}
\left(
\upsilon_{j,\mathrm{df}}
+\upsilon_{j,\mathrm{mt}}
\right).
\end{equation*}
Hence $\dot S_j\leq0$ on $S_j=1$. Together with the inward-pointing vector field on the coordinate boundaries and
\[
\left.\dot\upsilon_{j,\mathrm{ds}}\right|_{\upsilon_{j,\mathrm{ds}}=1}
=
-\alpha_RH(\mathcal J_j)\leq0,
\]
this shows that nonnegative initial data satisfying $S_j(0)\leq1$ and $0\leq\upsilon_{j,\mathrm{ds}}(0)\leq1$ remain in those bounds. Moreover,
\[
\dot\upsilon_{j,\mathrm{ap}}
\leq
d_0+k_{\mathrm{stress}}
-k_{\mathrm{ap,clr}}\upsilon_{j,\mathrm{ap}},
\]
so $\upsilon_{j,\mathrm{ap}}$ is uniformly bounded whenever $k_{\mathrm{ap,clr}}>0$.
Choose $B_{\mathrm{ap}}\geq \dfrac{d_0+k_{\mathrm{stress}}}{k_{\mathrm{ap,clr}}}$
and define
\begin{equation*}
\mathcal K=\left\{(p,d,m,a,q)\in\mathbb R_+^5:
p+d+m\leq1,\ 0\leq q\leq1,\ 0\leq a\leq B_{\mathrm{ap}}\right\}.
\label{eq:app_invariant_set}
\end{equation*}
On $\mathcal K$, the right-hand side of \eqref{eq:transformed_problem} is Lipschitz in the state and in bounded productive inputs, providing the existence, uniqueness, and continuous-dependence properties required by Proposition~\ref{prop:lip}:
\begin{proposition}[Positive invariance]
\label{prop:app_positive_invariance}
For every initial state in $\mathcal K$ and every
$\mathcal J\in L^1_+(0,T)$, system \eqref{eq:transformed_problem} has a unique
absolutely continuous solution on $[0,T]$, and that solution remains in
$\mathcal K$.
\end{proposition}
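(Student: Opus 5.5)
The plan is to treat \eqref{eq:transformed_problem} as a Carathéodory ODE $\dot{\boldsymbol\upsilon}=\mathbf F(\boldsymbol\upsilon,\mathcal J(t))$, with measurable time dependence entering only through $\mathcal J\in L^1_+(0,T)$. We then argue in three steps: local existence and uniqueness for a modified system, invariance of $\mathcal K$ for that system, and global continuation.

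\emph{Step 1 (reduction to a bounded-input, globally Lipschitz problem).} The input enters in two ways: linearly, through $b_{PD}\mathcal J$ and $b_{DM}\mathcal J$, and through the Hill factor $H(\mathcal J)\in[0,1]$, which is measurable and bounded. Let $\Pi$ be the Euclidean projection onto the closed convex set $\mathcal K$, and set $\widehat{\mathbf F}(\boldsymbol\upsilon,t)=\mathbf F(\Pi\boldsymbol\upsilon,\mathcal J(t))$. Since $\mathcal K$ is compact and every component of $\mathbf F$ is polynomial in the state, there is an integrable function $L(t)=C(1+\mathcal J(t))$ such that, for all states $\mathbf a,\mathbf b$,
\[
|\widehat{\mathbf F}(\mathbf a,t)|\le L(t),\qquad |\widehat{\mathbf F}(\mathbf a,t)-\widehat{\mathbf F}(\mathbf b,t)|\le L(t)|\mathbf a-\mathbf b| .
\]
Here $\Pi$ is $1$-Lipschitz, and the $\mathcal J$-dependence is affine with coefficients that are bounded on $\mathcal K$. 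The Carathéodory theorem, combined with Gronwall's inequality applied with the $L^1$ weight $L$, then gives a unique absolutely continuous solution of the modified system on all of $[0,T]$.

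\emph{Step 2 (invariance).} We show that the modified solution never leaves $\mathcal K$. Then $\Pi\boldsymbol\upsilon=\boldsymbol\upsilon$, so it solves the original system, and uniqueness of the original system follows from the same Lipschitz bound on $\mathcal K$. For invariance, check the tangency (subtangential) condition on each face, following the computations preceding the statement. On $\{p=0\}$ we have $\dot p=0$. On $\{d=0\}$, $\dot d=(k_{PD}+b_{PD}\mathcal J)p\ge0$. On $\{m=0\}$, $\dot m\ge0$. On $\{q=0\}$, $\dot q=0$. On $\{a=0\}$, $\dot a\ge0$. On $\{q=1\}$, $\dot q=-\alpha_RH\le0$. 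On $\{S=1\}$, $\dot S\le0$. On $\{a=B_{\mathrm{ap}}\}$, $\dot a\le d_0+k_{\mathrm{stress}}-k_{\mathrm{ap,clr}}B_{\mathrm{ap}}\le0$, using $q\le1$ and $d+m\le1$.

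Because the right-hand side is only integrable in $t$, the pointwise Nagumo argument must be replaced by an integral one. I would use a distance-function estimate. Put $\delta(t)=\dist(\boldsymbol\upsilon(t),\mathcal K)^2$, which is absolutely continuous. Then
\[
\dot\delta=2(\boldsymbol\upsilon-\Pi\boldsymbol\upsilon)\cdot\widehat{\mathbf F}(\boldsymbol\upsilon,t)\quad\text{a.e.}
\]
Since $\widehat{\mathbf F}$ is evaluated at $\Pi\boldsymbol\upsilon\in\partial\mathcal K$, this is $2\sqrt\delta$ times the component of $\mathbf F(\Pi\boldsymbol\upsilon,\mathcal J)$ along the outward normal $\boldsymbol\upsilon-\Pi\boldsymbol\upsilon$. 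That normal lies in the normal cone at $\Pi\boldsymbol\upsilon$, which is generated by the outward normals of the active faces, and each face check above gives a nonpositive component. Hence $\dot\delta\le0$, and with $\delta(0)=0$ we get $\delta\equiv0$.

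\emph{Main obstacle.} The main difficulty is the corners of $\mathcal K$, where several constraints are active at once. There the normal cone is generated by several face normals, for example the normal $(1,1,1,0,0)$ of $\{S=1\}$ together with $-\mathbf e_p$ on $\{p=0\}$. The sign conditions must therefore be verified for every combination of active faces, that is, for nonnegative combinations of the active outward normals. Each face inequality above holds at every point of that face, including points lying on other faces. Because of this, any nonnegative combination of the active outward normals gives a nonpositive inner product with $\mathbf F$, which is what the estimate for $\dot\delta$ requires. I would write this out carefully, noting that all the required signs hold for almost every $t$, since they only use $\mathcal J(t)\ge0$.
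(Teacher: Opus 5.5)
Your proposal is correct and follows essentially the paper's route. Both arguments rest on the same face-by-face sign checks: the coordinate faces, $S_j=1$, $\upsilon_{j,\mathrm{ds}}=1$, and the bound $B_{\mathrm{ap}}$ on $\upsilon_{j,\mathrm{ap}}$, combined with Lipschitz well-posedness on $\mathcal K$. Where the paper only sketches this, you make the invariance step rigorous for merely integrable $\mathcal J$, via the projected Carath\'eodory system, the $L^1$ weight $C(1+\mathcal J(t))$, and the distance-function and normal-cone argument at corners. One small tightening: take uniqueness for the unprojected system from the local Lipschitz bound of the polynomial vector field on a ball containing both trajectories, since a competing solution is not known a priori to stay in $\mathcal K$; Gronwall then gives this immediately.
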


The Hill function is globally Lipschitz on $[0,\infty)$. Its sharp derivative
bound is
\begin{equation}
L_H=\sup_{x\geq0}|H'(x)|=
\begin{cases}
\Theta^{-1},&n_{\mathrm H}=1,\\[2mm]
\dfrac{(n_{\mathrm H}+1)^2}{4n_{\mathrm H}\Theta}
\left(\dfrac{n_{\mathrm H}-1}{n_{\mathrm H}+1}\right)^{
(n_{\mathrm H}-1)/n_{\mathrm H}},&n_{\mathrm H}>1.
\end{cases}
\label{eq:app_Hill_Lipschitz}
\end{equation}
Set
\begin{align}
L_0=\max\{&3r_{\mathrm{grow}}+k_{PD},
k_{PD}+k_{DM}+2d_0,
k_{DM}+2d_0,\nonumber\\
&3d_0+k_{\mathrm{stress}}+k_{\mathrm{ap,clr}},
\alpha_Q+\alpha_R\},\qquad
L_1=b_{PD}+b_{DM},
\label{eq:app_L_constants}\\
C_{\mathcal J}&=\max\{b_{PD}+b_{DM},\alpha_RL_H\}.
\end{align}

\begin{proposition}[Lipschitz transport-to-state map]
\label{prop:app_input_state}
Let $\boldsymbol\upsilon$ and $\widehat{\boldsymbol\upsilon}$ solve
\eqref{eq:transformed_problem} with inputs $\mathcal J,\widehat{\mathcal J}\in
L^1_+(0,T)$ and initial states in $\mathcal K$. Put
\[
\mathcal J_*(t)=\max\{\mathcal J(t),\widehat{\mathcal J}(t)\},
\qquad M_T=\int_0^T\mathcal J_*(t)\,\mathrm dt.
\]
Then
\begin{equation}
\|\boldsymbol\upsilon(t)-\widehat{\boldsymbol\upsilon}(t)\|_\infty
\leq \exp\!\left(L_0t+L_1\int_0^t\mathcal J_*(\tau)\,\mathrm d\tau\right) \times\left[
\|\boldsymbol\upsilon(0)-\widehat{\boldsymbol\upsilon}(0)\|_\infty
+C_{\mathcal J}\int_0^t|\mathcal J-\widehat{\mathcal J}|\,\mathrm d\tau
\right].
\label{eq:app_input_state_bound}
\end{equation}
Consequently,
\begin{equation}
\sup_{0\leq t\leq T}
\|\boldsymbol\upsilon(t)-\widehat{\boldsymbol\upsilon}(t)\|_\infty
\leq C_T\left[
\|\boldsymbol\upsilon(0)-\widehat{\boldsymbol\upsilon}(0)\|_\infty
+\|\mathcal J-\widehat{\mathcal J}\|_{L^1(0,T)}\right],
\label{eq:app_gronwall_bound}
\end{equation}
where $C_T=e^{L_0T+L_1M_T}\max\{1,C_{\mathcal J}\}$.
\end{proposition}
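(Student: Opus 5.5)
The plan is a standard Grönwall argument on the difference $\mathbf e(t)=\boldsymbol\upsilon(t)-\widehat{\boldsymbol\upsilon}(t)$. By Proposition~\ref{prop:app_positive_invariance}, both solutions are absolutely continuous and remain in $\mathcal K$ for all $t\in[0,T]$. Hence every coordinate is bounded: $p,d,m\in[0,1]$, $q\in[0,1]$ and $a\in[0,B_{\mathrm{ap}}]$. I will write $\mathbf G(\boldsymbol\upsilon,\mathcal J)-\mathbf G(\widehat{\boldsymbol\upsilon},\widehat{\mathcal J})=[\mathbf G(\boldsymbol\upsilon,\mathcal J)-\mathbf G(\widehat{\boldsymbol\upsilon},\mathcal J)]+[\mathbf G(\widehat{\boldsymbol\upsilon},\mathcal J)-\mathbf G(\widehat{\boldsymbol\upsilon},\widehat{\mathcal J})]$. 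The first bracket gives a state-Lipschitz term of size $(L_0+L_1\mathcal J_*(t))\|\mathbf e\|_\infty$. The second gives an input term bounded by $C_{\mathcal J}|\mathcal J-\widehat{\mathcal J}|$.

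\textbf{State term, componentwise.} For the progenitor equation, the logistic term $r_{\mathrm{grow}}p(1-S)$ has partial derivatives bounded by $r_{\mathrm{grow}}(|1-S|+p)\le 2r_{\mathrm{grow}}$ in $p$ and $r_{\mathrm{grow}}p\le r_{\mathrm{grow}}$ in $d$ and $m$. Together with the transition term, whose coefficient is $k_{PD}+b_{PD}\mathcal J$, this gives at most $(3r_{\mathrm{grow}}+k_{PD}+b_{PD}\mathcal J)\|\mathbf e\|_\infty$. Similarly:
\begin{itemize}
\item the $d$-equation gives $k_{PD}+k_{DM}+(b_{PD}+b_{DM})\mathcal J$ plus $2d_0$ from the bilinear product $q\,d$ with both factors in $[0,1]$;
\item the $m$-equation gives $k_{DM}+b_{DM}\mathcal J+2d_0$;
\item the $a$-equation gives $d_0(2+1)+k_{\mathrm{stress}}+k_{\mathrm{ap,clr}}$, using $d+m\le1$;
\item the $q$-equation gives $|\alpha_Q(1-2\xi)|\le\alpha_Q$ plus $\alpha_RH\le\alpha_R$.
\end{itemize}
Taking maxima with the $\mathcal J$-dependent pieces evaluated at $\mathcal J\le\mathcal J_*$ yields $L_0+L_1\mathcal J_*$. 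If one prefers to use $\mathcal J_*$ symmetrically, one can instead hold the input fixed at whichever of $\mathcal J,\widehat{\mathcal J}$ appears; either way the bound is dominated by $\mathcal J_*$.

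\textbf{Input term.} The input enters linearly through $b_{PD}\mathcal J\,\widehat p$ and $b_{DM}\mathcal J\,\widehat d$, with $\widehat p,\widehat d\le 1$. In the $d$-equation the contributions combine to at most $(b_{PD}+b_{DM})|\mathcal J-\widehat{\mathcal J}|$. The input also enters the $q$-equation through $\alpha_R\widehat q\,[H(\mathcal J)-H(\widehat{\mathcal J})]$, which is bounded by $\alpha_RL_H|\mathcal J-\widehat{\mathcal J}|$ using \eqref{eq:app_Hill_Lipschitz}. The constant $L_H$ there comes from maximizing $H'(x)=n\Theta^nx^{n-1}/(\Theta^n+x^n)^2$. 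This is elementary calculus that I would only sketch: the substitution $y=(x/\Theta)^n$ reduces it to maximizing $y^{(n-1)/n}/(1+y)^2$, attained at $y=(n-1)/(n+1)$. Hence each component of the input term is bounded by $C_{\mathcal J}|\mathcal J-\widehat{\mathcal J}|$.

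\textbf{Grönwall step.} The main technical care is that the coefficient $L_0+L_1\mathcal J_*$ is only $L^1$ in time, not bounded, and that $\|\cdot\|_\infty$ is not differentiable. I would handle this by working with the integral form for each component, $|e_i(t)|\le|e_i(0)|+\int_0^t|\dot e_i|$. Taking the maximum over $i$ gives
\[
\|\mathbf e(t)\|_\infty\le\|\mathbf e(0)\|_\infty+C_{\mathcal J}\int_0^t|\mathcal J-\widehat{\mathcal J}|+\int_0^t(L_0+L_1\mathcal J_*)\|\mathbf e\|_\infty .
\]
The integral Grönwall inequality with a nonnegative $L^1$ kernel, where the first two terms form a nondecreasing forcing, then gives \eqref{eq:app_input_state_bound} directly. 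Taking the supremum over $t\le T$ and bounding $\exp(L_0t+L_1\int_0^t\mathcal J_*)\le e^{L_0T+L_1M_T}$ and $\max(1,C_{\mathcal J})$ yields \eqref{eq:app_gronwall_bound}.

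The step I expect to be most delicate is the componentwise bookkeeping of the bilinear terms. The constants in $L_0$ must come out exactly as listed, and this relies on using the invariant-set bounds $p+d+m\le1$, $q\le1$ from Proposition~\ref{prop:app_positive_invariance} rather than crude bounds.
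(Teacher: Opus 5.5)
Your proposal is correct and follows the paper's argument: bound the Jacobian row sums on $\mathcal K$ by $L_0+L_1\mathcal J_*$ and the input perturbation at fixed state by $C_{\mathcal J}|\mathcal J-\widehat{\mathcal J}|$, then close with the integral Gr\"onwall inequality. The paper phrases the closing step through the upper Dini derivative of $\|\boldsymbol\upsilon-\widehat{\boldsymbol\upsilon}\|_\infty$; your componentwise integral form is equivalent and handles the merely integrable coefficient a little more transparently.

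One bookkeeping fix is needed. With your bound $2r_{\mathrm{grow}}$ on the $p$-partial, the $p$-row sum is $4r_{\mathrm{grow}}+k_{PD}+b_{PD}\mathcal J$, not the $3r_{\mathrm{grow}}+k_{PD}$ in $L_0$. The correct bound on the convex set $\mathcal K$ is $r_{\mathrm{grow}}|1-S-p|\le r_{\mathrm{grow}}(1-S+p)\le r_{\mathrm{grow}}$, because $p\le S$, and this gives the stated entry.
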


\begin{proof}
On $\mathcal K$, the maximum row sum of the state Jacobian is bounded by
$L_0+L_1\mathcal J$. At fixed state, the input perturbation is bounded by
$C_{\mathcal J}|\mathcal J-\widehat{\mathcal J}|$. Therefore the upper Dini
derivative satisfies
\[
D^+\|\boldsymbol\upsilon-\widehat{\boldsymbol\upsilon}\|_\infty
\leq(L_0+L_1\mathcal J_*)
\|\boldsymbol\upsilon-\widehat{\boldsymbol\upsilon}\|_\infty
+C_{\mathcal J}|\mathcal J-\widehat{\mathcal J}|.
\]
Apply the integral form of Gronwall's inequality.
\end{proof}

All twelve kinetic constants and the initial state are fixed in \S\ref{sec:numerics}. In the transport-only null model they are also held common across organoids, so between-organoid differences arise from the
geometry-dependent productive fluxes rather than from imposed biological heterogeneity. Figure~\ref{fig:schematic} summarizes the three layers of the model --- bulk transport, surface kinetics and the cell state --- together with the nested variability model of \S\ref{sec:acceptance}.  

\begin{figure}[htbp]
\centering
\includegraphics[width=\linewidth]{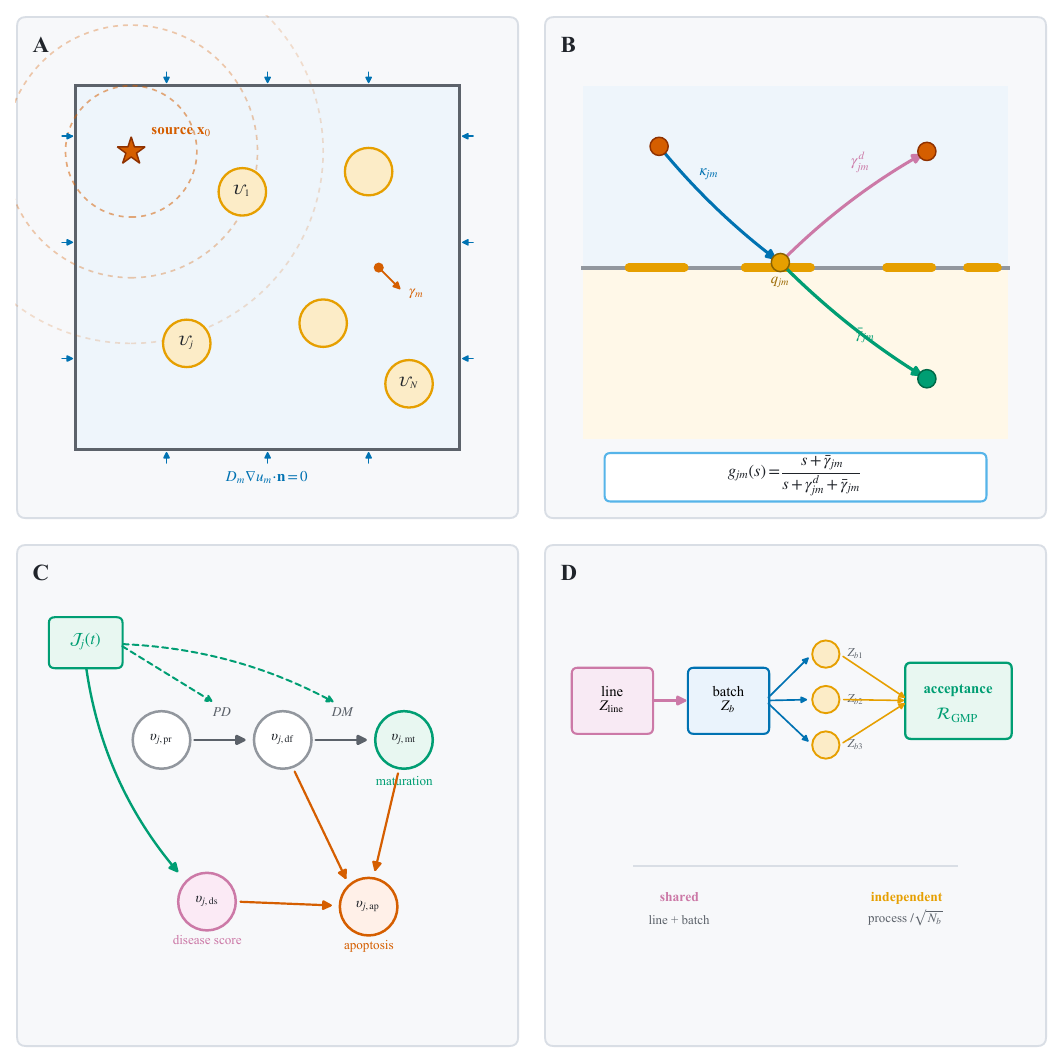}
\caption{Model schematic. (A) Small, well-separated organoids $\mathcal U_j$ in a bounded chamber are exposed to a localized dose at $\mathbf x_0$. The outer boundary is reflecting, whereas bulk clearance
$\gamma_m$ acts volumetrically. (B) At an accessible surface patch, a freely diffusing molecule adsorbs, subsequently desorbs or is productively internalized, producing the frequency-dependent filter $g_{jm}(s)$ in \eqref{eq:grobin}. (C) The productive flux $\mathcal J_j(t)$ modulates the progenitor--differentiated--mature cascade, suppresses the disease score and thereby changes apoptosis; extracellular concentration is not used directly as the cellular input. (D) Shared line and batch effects and independent organoid-level process effects propagate through the phenotype model to the batch acceptance probability $\mathcal R_{\mathrm{GMP}}$. Increasing batch size averages only the independent process component.}
\label{fig:schematic}
\end{figure}

\section{Reduction and formulae}
\label{sec:methods}
Two hypotheses recur. 
\begin{assumption}[Dilute, well separated]\label{ass:dilute}
$r_j=\varepsilon\ell_j$ with $\varepsilon\to0$, and there is a fixed
$d_*>0$, independent of $\varepsilon$, with
\begin{equation}
\min_{i\neq j}|\mathbf x_i-\mathbf x_j|\ \ge\ d_*,\qquad
\min_j\dist(\mathbf x_j,\partial\Omega)\ \ge\ d_*,\qquad
\min_j\dist(\mathbf x_j,\mathcal S_m)\ \ge\ d_*
\label{eq:sep}
\end{equation}
for every $\varepsilon$ under consideration, where the source datum is assumed to split into a bounded part and finitely many atoms, 
\begin{equation*}
f^{\mathrm{src}}_m=u_{m,0}+\widetilde I_m
=f^{\mathrm{reg}}_m+\sum_{\alpha=1}^{K_m}q_{\alpha m}\,
\delta_{\mathbf y_{\alpha m}},\qquad f^{\mathrm{reg}}_m\in L^\infty(\Omega),
\end{equation*}
and $\mathcal S_m=\{\mathbf y_{\alpha m}\}_{\alpha=1}^{K_m}$ is the set of atoms.
\end{assumption}

Three points about \eqref{eq:sep}. It is stated as a lower bound rather than as $O(1)$ because $O(1)$ is an upper bound and does not exclude separations that shrink with $\varepsilon$. The third condition is a condition on the atoms of the source only: what it controls is the boundedness of $\Gamma_m$ on the matching annuli, and a bounded $f^{\mathrm{reg}}_m$ delivers that however large its support, so for the localized port of \eqref{eq:dosing} $K_m=1$ and the condition reads $\min_j|\mathbf x_j-\mathbf x_0|\ge d_*$, while for the spatially uniform source of \S\ref{subsec:dosing_design} $K_m=0$ and it is vacuous. And that third condition is independent of the first two: an array can be perfectly well separated internally and still violate it.

\begin{assumption}[Conservative search]\label{ass:cons}
The outer boundary is reflecting and $\gamma_m=0$, so every released molecule is eventually adsorbed and $\sum_k\overline\pi_{km}=1$.
\end{assumption}

Assumption~\ref{ass:cons} is stated separately because it is exactly the hypothesis under which the normalized renewal formulae hold; experiments with $\gamma_m>0$ are governed instead by Remark~\ref{rem:lossy} and are
labelled lossy throughout.

\subsection{Robin condition and matched asymptotics in two dimensions}

To translate the surface exchange dynamics into an effective boundary condition for the bulk field under Assumption 1, we write $\chi_{jm}(s)=s+\gamma^d_{jm}+\bar\gamma_{jm}$ and transform \eqref{eq:surface} with $q_{jm,0}=0$, yielding
\begin{equation}
D_m\nabla\widetilde u_m\cdot\mathbf n_j=\kappa_{jm}g_{jm}(s)\widetilde u_m,
\qquad
g_{jm}(s)=\frac{s+\bar\gamma_{jm}}{\chi_{jm}(s)},\qquad
g_{jm}(0)=1-\sigma_{jm}.
\label{eq:grobin}
\end{equation}

The boundary is thus a frequency-dependent reactive filter, not a static sink; $\gamma^d_{jm}=0$ recovers the classical Robin condition. A nonzero initially bound population adds $ \gamma^d_{jm}q_{jm,0}/\chi_{jm}$ to the transformed flux balance and $\bar\gamma_{jm}q_{jm,0}/\chi_{jm}$ to the output.

With the effective reactive boundary established, we now construct the global multiple-target solution via matched asymptotics. Let $\nu=-1/\log\varepsilon$ and let $G_m(\mathbf x,\mathbf z;s)$ solve $D_m\Delta G_m-(s+\gamma_m)G_m=-\delta(\mathbf x-\mathbf z)$ with reflecting outer boundary, with local structure $G_m=-\log|\mathbf x-\mathbf z|/(2\pi D_m)+R_m$. Put $f_m^{\mathrm{src}}=u_{m,0}+\widetilde I_m$ and $\Gamma_m(\mathbf x,s)=\int_\Omega G_mf_m^{\mathrm{src}}$. Since the physical radius is $r_j=\varepsilon\ell_j$, the scaling $\kappa_{jm}=\kappa'_{jm}/\varepsilon$ holds the surface Damk\"ohler number fixed: \begin{equation} \mathrm{Da}^\partial_{jm}=\frac{\kappa_{jm}r_j}{D_m} =\frac{\kappa'_{jm}\ell_j}{D_m}=O(1). \label{eq:damkohler} 
\end{equation} 
This constant $\mathrm{Da}^\partial_{jm}$ ensures that the apparent divergence of $\kappa_{jm}$ precisely compensates for the shrinking radius. Consequently, the reactive term enters at the same $O(\nu)$ order as the target--target interaction. By matching the outer representation $\widetilde u_m\sim\Gamma_m-2\pi D_m\sum_kA_{km}G_m(\cdot,\mathbf x_k)$ to the inner Robin solution yields the finite-dimensional amplitude system
\begin{equation}
\mathbf A_m(s)=\nu\bigl[\mathbf I+2\pi\nu D_m\bm{\mathcal G}_m(s)
+\nu\bm\Psi_m(s)\bigr]^{-1}\bm\Gamma_m(s),
\label{eq:amp}
\end{equation}
with $\Psi_{jm}(s)=D_m/(\kappa'_{jm}g_{jm}(s)\ell_j)$ and
\begin{equation}
(\bm{\mathcal G}_m)_{jk}(s)=
\begin{cases} G_m(\mathbf x_j,\mathbf x_k;s),& j\neq k,\\
R_m(\mathbf x_j,\mathbf x_j;s)-\log\ell_j/(2\pi D_m),& j=k.\end{cases}
\label{eq:gmat}
\end{equation}

Because \eqref{eq:amp} is kept non-perturbative in $\nu$ it resums the
logarithms; the residual is algebraic in $\varepsilon$. Using
$\widetilde U_{jm}(\ell_j,s)=A_{jm}\Psi_{jm}$ and
$|\partial\mathcal U_j|=2\pi\varepsilon\ell_j$, the factors of $g_{jm}$,
$\gamma^d_{jm}$ and $\kappa'_{jm}$ cancel exactly, since
$\chi_{jm}g_{jm}=s+\bar\gamma_{jm}$:
\begin{equation}
\widetilde{\mathcal J}_{jm}(\mathbf x_0,s)\sim
\frac{2\pi D_m\bar\gamma_{jm}}{s+\bar\gamma_{jm}}\,A_{jm}(\mathbf x_0,s).
\label{eq:fluxred}
\end{equation}

The ratio of productive flux to net boundary flux is $\bar\gamma_{jm}/(s+\bar\gamma_{jm})$ exactly, at every order and in every dimension; reversible kinetics still act indirectly through $g_{jm}$ inside $\Psi_{jm}$.

\subsection{Probabilities and times}
\label{subsec:times}
To quantify the long-term behavior of a unit localized release, we evaluate the internalization probabilities and mean conditional times via the $s\to0$ limit of the Laplace-transformed fluxes. Specifically, we define $\pi_{jm}(\mathbf x_0)=\lim_{s\to0}\widetilde{\mathcal J}_{jm}$ and $\mathcal T^{\mathrm{cond}}_{jm}=-\pi_{jm}^{-1}\partial_s\widetilde{\mathcal J}_{jm}\vert{}_{s=0}$.

\begin{remark}[The $s\to0$ limit]\label{rem:pseudo}
For $\gamma_m>0$, these limits are literal. Under Assumption~\ref{ass:cons}, the operator has a zero mode and $G_m=[(s+\gamma_m)\vert{}\Omega\vert{}]^{-1}+G^0_m+O(s)$, with $G^0_m$ being the Neumann pseudo-Green's function. Because the singular mode enters $\bm\Gamma_m$ and every row of $\bm{\mathcal G}_m$ identically, it cancels out in \eqref{eq:amp}. Consequently, the limits remain finite and are evaluated through $G^0_m$ rather than factor by factor.
\end{remark}

Evaluating this limit for the reset protocol in \eqref{eq:reset} yields the steady-state probabilities. In general, taking $s\to0$ gives $\pi_{jm}=(1-\sigma_{jm})\overline\pi_{jm}/(1-\sum_k\sigma_{km}\overline\pi_{km})$. Under the conservative setting of Assumption~\ref{ass:cons}, this simplifies to:
\begin{equation}
\pi_{jm}=\frac{(1-\sigma_{jm})\overline\pi_{jm}}{\sum_k(1-\sigma_{km})\overline\pi_{km}},
\qquad
\mathcal T^{\mathrm{tot}}m=\frac{\mathcal T^{\mathrm{ads}}m
+\sum_k\overline\pi{km}\langle\tau\rangle{km}}{\sum_k(1-\sigma_{km})\overline\pi_{km}},
\label{eq:renewalpi}
\end{equation}
where $\mathcal T^{\mathrm{ads}}_m$ is the mean first-adsorption time. Crucially, the numerator of $\mathcal T^{\mathrm{tot}}_m$ explicitly accounts for the diffusive search; omitting it leads to an underestimate of the internalization time.

To further characterize the time distribution, we extract the higher moments by defining the generating functions for successful and failed cycles: $\mathfrak S_m(s)=\sum_k(1-\sigma_{km})\widetilde\phi_{km}\widetilde J_{km}$ and $\mathfrak F_m(s)=\sum_k\sigma_{km}\widetilde\phi_{km}\widetilde J_{km}$. Let $p_m=\mathfrak S_m(0)=1-\mathfrak F_m(0)$ denote the overall success probability. Furthermore, by letting $\mathfrak K_{km}(s)=\widetilde\phi_{km}\widetilde J_{km}$, we can define the auxiliary moment terms $\mathfrak m^{(1)}_{km}=-\mathfrak K'_{km}(0)=T^{(1)}_{km}+\overline\pi_{km}\langle\tau\rangle_{km}$ and $\mathfrak m^{(2)}_{km}=\mathfrak K''_{km}(0)=T^{(2)}_{km}+2T^{(1)}_{km}\langle\tau\rangle_{km}+\overline\pi_{km}\langle\tau^2\rangle_{km}$. Differentiating $\mathfrak S_m/(1-\mathfrak F_m)$ then provides the first and second moments of the total time:
\begin{equation}
\mathcal T^{\mathrm{tot}}_m=\frac{\sum_k\mathfrak m^{(1)}_{km}}{p_m},
\qquad
\mathcal T^{\mathrm{tot}}_{m,2}=\frac{\sum_k\mathfrak m^{(2)}_{km}}{p_m}
+\frac{2\bigl(\sum_k\sigma_{km}\mathfrak m^{(1)}_{km}\bigr)
\bigl(\sum_k\mathfrak m^{(1)}_{km}\bigr)}{p_m^2}.
\label{eq:moments}
\end{equation}
Notably, the second term in $\mathcal T^{\mathrm{tot}}_{m,2}$ captures the dispersion generated by a geometric number of failed cycles; omitting this term underestimates the coefficient of variation.

\begin{assumption}[Finite moments]\label{ass:moments}
Every statement below obtained by differentiating a transform at $s=0$ --- \eqref{eq:renewalpi}, \eqref{eq:moments}, the timing clause of Corollary~\ref{cor:remix} and the entries of $\partial_s\mathbf P^{(\eta)}_m(0)$ --- presupposes that the differentiated transforms are $C^1$ (respectively $C^2$) at $s=0$ from the right. Concretely: the first-adsorption laws $\widetilde J_{jm}$, the residence laws $\widetilde\phi_{jm}$ and the re-adsorption kernels $P_{jk,m}$ have finite first moments for \eqref{eq:renewalpi} and the timing clause, and finite second moments for $\mathcal T^{\mathrm{tot}}_{m,2}$ in \eqref{eq:moments}. This is not automatic: Remark~\ref{rem:tails} exhibits the heavy-tailed regime in which it fails and the corresponding derivatives do not exist.
\end{assumption}
\begin{remark}[Heavy tails]\label{rem:tails}
If $\widetilde\phi_{jm}(s)=1-c_\tau s^\beta+o(s^\beta)$ or $\widetilde\psi^\partial_{jm}(z)=1-c_\partial z^\alpha+o(z^\alpha)$ with exponents in $(0,1)$, the corresponding means diverge and $\mathcal J_{m,\mathrm{tot}}(t)\asymp t^{-1-\min(\alpha,\beta)}$ \cite{bressloff2025diffusion,yu2026pattern}; finite-horizon uptake fractions and quantiles should then replace mean-based summaries.
\end{remark}
\begin{remark}[Lossy search]\label{rem:lossy}
Without Assumption~\ref{ass:cons}, $\sum_k\overline\pi_{km}<1$, $\sum_j\pi_{jm}<1$, and the two denominators above no longer coincide. Writing $\widetilde{\mathcal F}_m=\mathfrak S_m/(1-\mathfrak F_m)$, the eventual internalization probability is $\widetilde{\mathcal F}_m(0)<1$ and $\mathbb E[T_m\mid\text{internalization}]=-\mathfrak S'_m(0)/\mathfrak S_m(0) -\mathfrak F'_m(0)/(1-\mathfrak F_m(0))$.
\end{remark}
\begin{remark}[Per-encounter consistency is not protocol equivalence]
The identity $g_{jm}(0)=1-\sigma_{jm}$ identifies the static transmission factor of the Markovian filter with the per-encounter internalization probability of the renewal model. It does not make the two protocols equivalent: the Markovian boundary condition returns a desorbed molecule at its desorption
point and so corresponds to continued search, whereas \eqref{eq:reset} assumes complete remixing. Their one-target eventual probabilities agree under Assumption~\ref{ass:cons} because both equal one; their time laws do not.
\end{remark}

\subsection{Theoretical consequences and the 3D extensions}
\label{subsec:props}
We now formalize the core theoretical guarantees of our reduced model. We refer the reader to Appendix~\ref{app:proofs} for their proofs.

\begin{proposition}[Reduced uptake matrix]\label{prop:red}
Under Assumption~\ref{ass:dilute}, the transformed productive flux is determined to all logarithmic orders in $\nu$ by \eqref{eq:amp}, and away from the targets $\widetilde u_m=\Gamma_m-2\pi D_m\sum_jA_{jm}G_m(\cdot,\mathbf x_j)+O(\varepsilon)$. If in addition each $\mathcal U_j$ is circular, then for arbitrary $O(1)$-separated source placement and fixed $s>0$
\begin{equation}
\widetilde{\mathcal J}^{\,\mathrm{full}}_{jm}(s)
-\widetilde{\mathcal J}^{\,\mathrm{red}}_{jm}(s)=O(\varepsilon^2),
\label{eq:eps2}
\end{equation}
provided the second-order composite residual satisfies
\eqref{eq:app_residual_bound}. The first inner correction is a dipole and has
zero integrated normal flux; this removes the $O(\varepsilon)$ flux term but,
by itself, does not establish the uniform remainder estimate. The pointwise
field error remains $O(\varepsilon)$. Appendix~\ref{app:matched2d} gives the
derivation and precise scope of \eqref{eq:eps2}.
\end{proposition}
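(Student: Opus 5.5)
The plan is a standard strong-localized-perturbation matched expansion in the Laplace variable, followed by a residual estimate for the composite approximation. Fix $s>0$ and drop the species index.

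\textbf{Outer problem.} Away from the targets, Assumption~\ref{ass:dilute} makes $\Gamma$ bounded and smooth on annuli $\varepsilon\ll|\mathbf x-\mathbf x_j|\ll d_*$. This holds because the source atoms stay a distance $d_*$ away and $f^{\mathrm{reg}}\in L^\infty$. The outer ansatz is $\widetilde u=\Gamma-2\pi D\sum_kA_kG(\cdot,\mathbf x_k)$.

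\textbf{Inner problem.} Near $\mathbf x_j$ set $\mathbf y=(\mathbf x-\mathbf x_j)/\varepsilon$. The leading inner problem is $\Delta_{\mathbf y}U=0$ for $|\mathbf y|>\ell_j$, with the transformed Robin condition \eqref{eq:grobin}. Since $\kappa=\kappa'/\varepsilon$, this condition becomes $D\partial_\rho U=\kappa' g_j U$ at $\rho=\ell_j$. Its radial solution is $U=A_j(\log(\rho/\ell_j)+\Psi_j)$, which fixes $U(\ell_j)=A_j\Psi_j$.

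\textbf{Matching and flux.} Write $\log\rho=\log|\mathbf x-\mathbf x_j|+1/\nu$ and match the inner solution to the local expansion of the outer solution. This uses $G=-\log|\cdot|/(2\pi D)+R$, and the $j=k$ entry of \eqref{eq:gmat} absorbs $\log\ell_j$. The result is exactly the linear system \eqref{eq:amp}. Because this system is kept unexpanded in $\nu$, all logarithmic terms are resummed, and what is left over is algebraic in $\varepsilon$. The flux formula \eqref{eq:fluxred} then follows by integrating the Robin flux over $|\partial\mathcal U_j|=2\pi\varepsilon\ell_j$. The surface relation $\bar\gamma q=\bar\gamma\kappa u/\chi$ gives the productive-to-net ratio $\bar\gamma/(s+\bar\gamma)$, with the cancellation $\chi g=s+\bar\gamma$.

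\textbf{$O(\varepsilon)$ correction and circular targets.} At the next order, the outer field seen from target $j$ contains a linear term $\varepsilon\mathbf y\cdot\nabla F_j(\mathbf x_j)$. Here $F_j$ is the regular part of $\Gamma-2\pi D\sum_{k\ne j}A_kG(\cdot,\mathbf x_k)-2\pi DA_jR(\cdot,\mathbf x_j)$. For a circular target with homogeneous Robin kinetics, the inner correction solving this problem is a pure $\cos\theta,\sin\theta$ mode, i.e.\ a dipole. Its integral over the circle vanishes, so it contributes zero net normal flux and hence zero productive flux. The dipole feeds back to the outer field only as $O(\varepsilon^2)\nabla G$, and the $-(s+\gamma)$ term enters the inner problem only at $O(\varepsilon^2)$. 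Together these leave the flux error at $O(\varepsilon^2)$, while the pointwise field error stays at $O(\varepsilon)$ because of the dipole.

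\textbf{Rigorous remainder (main obstacle).} To make this rigorous, I would build a composite approximation $u_c$: the outer solution plus the inner corrections through the dipole, minus their common parts, cut off smoothly on the matching annuli. The error $e=\widetilde u-u_c$ solves the Helmholtz-type problem $D\Delta e-(s+\gamma)e=r$ on $\Omega_\varepsilon$, with Neumann data on $\partial\Omega$ and a Robin residual on each $\partial\mathcal U_j$.

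The hard step is a stability estimate for this problem that is uniform in $\varepsilon$. Since $s>0$, the operator $D\Delta-(s+\gamma)$ with Neumann/Robin conditions is coercive, and $\mathrm{Re}\,g_j>0$. The natural first attempt is therefore an energy or maximum-principle bound for real $s$, controlling the boundary flux $\int_{\partial\mathcal U_j}\kappa g e$ by the residual. The difficulty is that naive trace bounds on small circles lose powers of $\log\varepsilon$ or $\varepsilon$.

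I would handle this with a weighted estimate using the capacity-type test function $\log(|\mathbf x-\mathbf x_j|/\varepsilon\ell_j)$. This is precisely the second-order residual bound \eqref{eq:app_residual_bound} that the statement lists as a proviso. Since that bound is a hypothesis of the proposition, I would simply assume it and then verify that it converts the $O(\varepsilon^2)$ residual into an $O(\varepsilon^2)$ flux error. I expect this reduction to be the only genuinely delicate point.
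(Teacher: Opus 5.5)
Your outer/inner/matching construction, the radial solution $U=A_j(\log(\rho/\ell_j)+\Psi_j)$, the cancellation $\chi_jg_j=s+\bar\gamma_j$ and the zero-mean dipole all coincide with the paper's derivation in Appendix~\ref{app:matched2d}. The gap is in the one step that is actually proved rather than formal. You identify \eqref{eq:app_residual_bound} with a ``weighted estimate using the capacity-type test function'' and conclude that the delicate stability step can simply be assumed. That is a misreading. Hypothesis \eqref{eq:app_residual_bound} only says that the residuals $\mathcal R_{\varepsilon,m}$ and $\mathcal B_{\varepsilon,jm}$ of the composite approximant are $O(\varepsilon^2)$ in a particular norm. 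It says nothing about how the error $w=\widetilde u^{\mathrm{full}}_m-\widetilde u^{\mathrm{app}}_m$ responds to those residuals. Turning the residual bound into a flux bound is exactly the content of Proposition~\ref{prop:app_flux_error}, and your proposal leaves it as something to ``verify''. The test-function idea you offer for it is neither specified nor needed.

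The missing idea is that no trace inequality on the small circles is used; the large Robin coefficient supplies the trace control. Your ``natural first attempt'' is in fact the whole argument. Your worry about losing powers applies only to the unweighted $H^1$ norm. Work instead in the Robin-weighted energy norm $\|v\|_{\mathcal E}^2=D_m\|\nabla v\|_2^2+a(s)\|v\|_2^2+\sum_j\frac{K_j(s)}{\varepsilon}\|v\|_{L^2(\partial\mathcal U_j)}^2$. Testing the error equation with $w$ then gives
\[
\|w\|_{\mathcal E}^2=\langle\mathcal R_{\varepsilon,m},w\rangle+\sum_j\int_{\partial\mathcal U_j}\mathcal B_{\varepsilon,jm}\,w\,\mathrm dS .
\]
The norm in \eqref{eq:app_residual_bound} is precisely dual to $\|\cdot\|_{\mathcal E}$. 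The $H^{-1}$ part pairs with the $H^1$ part, uniformly because $a\ge a_->0$. The weight $\varepsilon/K_j$ on $\|\mathcal B_{\varepsilon,jm}\|^2$ is dual to the weight $K_j/\varepsilon$. Hence $\|w\|_{\mathcal E}\le C\varepsilon^2$ immediately.

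The flux difference is $\frac{\bar\gamma_{jm}}{s+\bar\gamma_{jm}}\frac{K_j}{\varepsilon}\int_{\partial\mathcal U_j}w\,\mathrm dS$. Cauchy--Schwarz bounds it by $\frac{\bar\gamma_{jm}}{s+\bar\gamma_{jm}}\bigl(K_j|\partial\mathcal U_j|/\varepsilon\bigr)^{1/2}\|w\|_{\mathcal E}$, with $K_j|\partial\mathcal U_j|/\varepsilon=2\pi K_j\ell_j=O(1)$. The $1/\varepsilon$ reactivity exactly offsets the $O(\varepsilon)$ perimeter, so no power of $\varepsilon$ or $\log\varepsilon$ is lost.

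What remains is $\widetilde{\mathcal J}^{\mathrm{app}}_{jm}-\widetilde{\mathcal J}^{\mathrm{red}}_{jm}=O(\varepsilon^2)$. The dipole's zero mean removes the $O(\varepsilon)$ term, as you say. However, the paper takes a uniformly bounded mean trace of the second inner corrector as an explicit additional hypothesis. Your remarks on the $-(s+\gamma)$ term and the dipole feedback are only the formal motivation for that hypothesis, not a proof of it.
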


Having established that the reduced system captures the productive flux with $O(\varepsilon^2)$ accuracy, we can reliably use it to evaluate the long-time statistical observables.

\begin{proposition}[Probabilities and times]\label{prop:pt}
Under Assumption~\ref{ass:moments} for the mean and second-moment statements (the splitting probabilities themselves require no moment hypothesis), $\pi_{jm}$ and $\mathcal T^{\mathrm{cond}}_{jm}$ are the stated $s\to0$ limits, interpreted through Remark~\ref{rem:pseudo} when $\gamma_m=0$; for the reset model they reduce to \eqref{eq:renewalpi}, and for continued search the splitting vector is $\bm\pi^{(\eta)}_m=\bm W_m[\mathbf I-\mathbf P^{(\eta)}_m\bm\Sigma_m]^{-1} \overline{\bm\pi}_m$, normalized in the conservative case. In particular, for $N=1$ under Assumption~\ref{ass:cons},
\begin{equation}
\pi_{1m}=1\qquad\text{for every }0\le\sigma_{1m}<1 .
\label{eq:onetarget}
\end{equation}
\end{proposition}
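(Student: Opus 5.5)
The proof will be a direct computation of the $s\to0$ limits of the transformed productive fluxes, one protocol at a time.

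\emph{Well-posedness of the limits.} For $\gamma_m>0$ every transform in \eqref{eq:reset}, \eqref{eq:cont} and \eqref{eq:amp} is analytic in a neighbourhood of $s=0$, so the limits can be read off directly. For $\gamma_m=0$ we invoke Remark~\ref{rem:pseudo}. The singular part $[s|\Omega|]^{-1}$ of $G_m$ enters $\bm\Gamma_m$ and every entry of $\bm{\mathcal G}_m$ with the same coefficient. We therefore write $\bm{\mathcal G}_m=[s|\Omega|]^{-1}\mathbf 1\mathbf 1^\top+\bm{\mathcal G}^0_m+O(s)$ and apply Sherman--Morrison to the rank-one perturbation in \eqref{eq:amp}. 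This shows that $\mathbf A_m(s)$ has a finite limit, expressed through $G^0_m$.

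\emph{Probabilities and moments.} The splitting probabilities are values at $s=0$, not derivatives, so they need only continuity. Continuity holds because all kernels are Laplace transforms of finite measures, using monotone or dominated convergence as $s\downarrow0$. The conditional times $\mathcal T^{\mathrm{cond}}_{jm}$ and the second moments are derivatives at $s=0$, so for these we use the $C^1$ or $C^2$ regularity granted by Assumption~\ref{ass:moments}.

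\emph{Reset protocol.} Setting $s=0$ in \eqref{eq:reset} with $\widetilde\phi_{km}(0)=1$ and $\widetilde J_{km}(0)=\overline\pi_{km}$ gives $\pi_{jm}=(1-\sigma_{jm})\overline\pi_{jm}/(1-\sum_k\sigma_{km}\overline\pi_{km})$. The denominator is positive because $\sum_k\overline\pi_{km}\le1$ and $\sigma_{km}<1$. Under Assumption~\ref{ass:cons}, $\sum_k\overline\pi_{km}=1$, so the denominator equals $\sum_k(1-\sigma_{km})\overline\pi_{km}$, which is \eqref{eq:renewalpi}. The time formulas follow by differentiating $\mathfrak S_m/(1-\mathfrak F_m)$ once and twice via the quotient rule, as in \eqref{eq:moments}.

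\emph{Continued search and the $\eta$-interpolation.} Evaluate \eqref{eq:cont} with $\mathbf P^{(\eta)}_m$ in place of $\mathbf P_m$ at $s=0$, where $\bm\Phi_m(0)=\mathbf I$. This gives $\bm\pi^{(\eta)}_m=\bm W_m[\mathbf I-\mathbf P^{(\eta)}_m(0)\bm\Sigma_m]^{-1}\overline{\bm\pi}_m$. The matrix is invertible by the bound $\|\mathbf P^{(\eta)}_m\bm\Sigma_m\|_1\le\max_k\sigma_{km}<1$. This bound survives the convex combination, since $\mathbf P^{\mathrm{reset}}_m(0)=\overline{\bm\pi}_m\mathbf 1^\top$ also has column sums at most one.

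\emph{Conservation in the conservative case.} Under Assumption~\ref{ass:cons}, $\mathbf 1^\top\mathbf P^{(\eta)}_m(0)=\mathbf 1^\top$. Put $\mathbf y=[\mathbf I-\mathbf P\bm\Sigma]^{-1}\overline{\bm\pi}$. Then $\mathbf y=\overline{\bm\pi}+\mathbf P\bm\Sigma\mathbf y$, and multiplying on the left by $\mathbf 1^\top$ gives $\mathbf 1^\top\mathbf y=1+\mathbf 1^\top\bm\Sigma\mathbf y$. Hence $\mathbf 1^\top\bm W\mathbf y=1$, so $\sum_j\pi_{jm}=1$ automatically. This is the sense of ``normalized'': normalization is either automatic or imposed when the reduced kernels are only approximately stochastic.

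\emph{The $N=1$ case.} \eqref{eq:onetarget} is then immediate in both protocols. In the reset protocol, $\overline\pi_{1m}=1$ gives $\pi_{1m}=(1-\sigma_{1m})/(1-\sigma_{1m})=1$. In the continued-search protocol, $P_{11,m}(0)=1$ gives $(1-\sigma_{1m})(1-\sigma_{1m})^{-1}\cdot1=1$. The condition $\sigma_{1m}<1$ is exactly what keeps the denominators nonzero.

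\emph{Main obstacle.} I expect the main obstacle to be the $\gamma_m=0$ limit in the spatially reduced flux \eqref{eq:fluxred}. There one must verify that the pole cancellation in \eqref{eq:amp} leaves $\sum_j\lim_{s\to0}2\pi D_mA_{jm}=1$. My plan is to use Sherman--Morrison: the rank-one pole forces $\mathbf 1^\top(2\pi D_m\nu\mathbf A_m)$ to tend to the source mass normalized to one unit. This identity also links the reduced and renewal formulations.
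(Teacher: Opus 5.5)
Your proposal is correct and is essentially the paper's proof. The $\gamma_m=0$ limit is the Sherman--Morrison rank-one cancellation of Appendix~\ref{app:zeromode}, which gives $\mathbf 1^\top\bm\pi_m=Q_{m,0}$ and hence $\pi_{1m}=1$ for $N=1$. The reset formula, the continued-search vector with the conservation identity $\mathbf 1^\top[\mathbf I-\mathbf P(0)\bm\Sigma]=\mathbf 1^\top\bm W$, and the moment formulae are the $s=0$ evaluations of the renewal resolvents in Appendix~\ref{app:renewal_resolvents}.

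There are two minor slips. First, the quantity that tends to one is $\mathbf 1^\top(2\pi D_m\mathbf A_m)$, without the extra $\nu$; the paper also assumes $\mathbf B_m(s)^{-1}$ bounded and $c_m^0\neq0$ for that step. Second, the renewal transforms need not be analytic at $s=0$ even when $\gamma_m>0$, because residence laws can have polynomial tails. It is therefore your monotone-convergence continuity argument, not analyticity, that carries the splitting probabilities.
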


\begin{corollary}[What partial remixing can change]\label{cor:remix}
Assume conservative search, well-separated two-dimensional targets and $\max_j\sigma_{jm}\le1-\delta$. Writing $\mathbf P^{\mathrm{cont}}_m(0)=\mathbf I+\mathbf E_m$ with $\|\mathbf E_m\|_1=O(\nu)$ and using $\bm W_m=\mathbf I-\bm\Sigma_m$ exactly,
\begin{equation}
\bm\pi^{(1)}_m=\overline{\bm\pi}_m+O(\nu),
\qquad
\bm\pi^{(0)}_m=\frac{\bm W_m\overline{\bm\pi}_m}
{\mathbf1^\top\bm W_m\overline{\bm\pi}_m},
\label{eq:endpoints}
\end{equation}
The $\eta_m=0$ expression in \eqref{eq:endpoints} is exact, whereas $\bm\pi^{(1)}_m$ carries an $O(\nu)$ residual; the two endpoints therefore differ by the normalized reweighting $\overline\pi_{jm}\mapsto(1-\sigma_{jm})\overline\pi_{jm}$ to leading order in $\nu$, not exactly. If $\bm\Sigma_m=\sigma_m\mathbf I$ then $\bm\pi^{(\eta)}_m=\overline{\bm\pi}_m+O(\nu)$ uniformly in $\eta_m$. The implied constant is proportional to $\max_j\sigma_{jm}/(1-\sigma_{jm})$, since $\bm\Sigma_m\bm W_m^{-1}=\diag(\sigma_{jm}/(1-\sigma_{jm}))$, so the expansion is controlled only when $\nu\max_j\sigma_{jm}/(1-\sigma_{jm})\ll1$. At
$\varepsilon=0.02$ ($\nu=0.256$) we use $\max_j\sigma_{jm}\lesssim0.3$ as a working range; this is an empirical accuracy guideline read off the numerical residuals of \S\ref{subsec:retention}. Under Assumption~\ref{ass:moments}, timing generally depends on $\eta_m$, through $\partial_s\mathbf
P^{(\eta)}_m(0)$, which is not annihilated by $\bm W_m$; particular geometries and residence laws may nevertheless produce exceptional cancellations in which the $\eta_m$-dependence of a given timing functional vanishes, and we do not exclude them. Consequently $\eta_m$ is identifiable from endpoint uptake fractions only in arrays with heterogeneous retention, whereas time-resolved uptake identifies
it generically --- that is, for all but exceptional $(\text{geometry},\ \text{residence law})$ pairs --- even when retention is homogeneous.
\end{corollary}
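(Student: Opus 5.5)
The plan is to start from the splitting formula of Proposition~\ref{prop:pt}, $\bm\pi^{(\eta)}_m=\bm W_m[\mathbf I-\mathbf P^{(\eta)}_m(0)\bm\Sigma_m]^{-1}\overline{\bm\pi}_m$, and treat the two endpoints separately.

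\textbf{The reset endpoint $\eta_m=0$.} Here $\mathbf P^{\mathrm{reset}}_m(0)=\overline{\bm\pi}_m\mathbf 1^\top$ is rank one. Sherman--Morrison then gives the inverse in closed form:
\[
[\mathbf I-\overline{\bm\pi}_m\mathbf 1^\top\bm\Sigma_m]^{-1}\overline{\bm\pi}_m=\frac{\overline{\bm\pi}_m}{1-\mathbf 1^\top\bm\Sigma_m\overline{\bm\pi}_m}.
\]
Under Assumption~\ref{ass:cons} we have $\mathbf 1^\top\overline{\bm\pi}_m=1$, so $1-\mathbf 1^\top\bm\Sigma_m\overline{\bm\pi}_m=\mathbf 1^\top\bm W_m\overline{\bm\pi}_m$. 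This is the exact $\eta_m=0$ formula in \eqref{eq:endpoints}, and it agrees with \eqref{eq:renewalpi}.

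\textbf{The continued-search endpoint $\eta_m=1$.} Substituting $\mathbf P^{\mathrm{cont}}_m(0)=\mathbf I+\mathbf E_m$ gives
\[
\mathbf I-\bm\Sigma_m-\mathbf E_m\bm\Sigma_m=\bm W_m\bigl(\mathbf I-\bm W_m^{-1}\mathbf E_m\bm\Sigma_m\bigr),
\]
so that
\[
\bm\pi^{(1)}_m=\bm W_m\bigl(\mathbf I-\bm W_m^{-1}\mathbf E_m\bm\Sigma_m\bigr)^{-1}\bm W_m^{-1}\overline{\bm\pi}_m .
\]
The inverse exists and has a Neumann series as long as $\|\bm W_m^{-1}\mathbf E_m\bm\Sigma_m\|_1\le\delta^{-1}\|\mathbf E_m\|_1=O(\nu)$ is small. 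It is cleaner to conjugate by $\bm\Sigma_m$, which shows the effective small parameter is $\|\mathbf E_m\|_1\max_j\sigma_{jm}/(1-\sigma_{jm})$; this is the origin of the stated constant and of the working range $\nu\max_j\sigma_{jm}/(1-\sigma_{jm})\ll1$. At leading order the result is $\bm W_m\bm W_m^{-1}\overline{\bm\pi}_m=\overline{\bm\pi}_m$, with an $O(\nu)$ error. Comparing the two endpoints then yields the reweighting statement.

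\textbf{Intermediate $\eta_m$.} Write
\[
\mathbf P^{(\eta)}_m(0)=\mathbf I+\bigl[(1-\eta_m)(\overline{\bm\pi}_m\mathbf 1^\top-\mathbf I)+\eta_m\mathbf E_m\bigr].
\]
The first bracket is not small, which is why a direct expansion fails. In the case $\bm\Sigma_m=\sigma_m\mathbf I$, multiply the renewal equation by $\mathbf 1^\top$. Column stochasticity of both kernels gives $\mathbf 1^\top\bm\pi=1$. The reset part contributes $\sigma_m\overline{\bm\pi}_m(\mathbf 1^\top\bm\pi)=\sigma_m\overline{\bm\pi}_m$, so the fixed-point equation becomes
\[
\bm\pi=(1-\sigma_m)\overline{\bm\pi}_m+\sigma_m(1-\eta_m)\overline{\bm\pi}_m+\sigma_m\eta_m(\mathbf I+\mathbf E_m)\bm\pi .
\]
Solving this gives $(1-\sigma_m\eta_m)(\bm\pi-\overline{\bm\pi}_m)=\sigma_m\eta_m\mathbf E_m\bm\pi$. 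Hence $\bm\pi-\overline{\bm\pi}_m=O(\nu)$ uniformly in $\eta_m$, because $1-\sigma_m\eta_m\ge\delta$.

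\textbf{Timing and identifiability.} Differentiate the transform $\bm W_m\bm\Phi_m[\mathbf I-\mathbf P^{(\eta)}_m\bm\Sigma_m\bm\Phi_m]^{-1}\widetilde{\mathbf J}_m$ at $s=0$, which is licensed by Assumption~\ref{ass:moments}. The result contains the term
\[
\bm W_m\mathbf M\,\partial_s\mathbf P^{(\eta)}_m(0)\bm\Sigma_m\mathbf M\overline{\bm\pi}_m,\qquad \mathbf M=[\mathbf I-\mathbf P^{(\eta)}_m(0)\bm\Sigma_m]^{-1}.
\]
This term is affine in $\eta_m$ through $\partial_s\mathbf P^{\mathrm{cont}}_m(0)-\partial_s\mathbf P^{\mathrm{reset}}_m(0)$. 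Its $\eta_m$-derivative is a real-analytic function of the geometry and residence parameters that is not identically zero; one explicit configuration, for instance $N=1$ with a non-exponential $\phi$, shows this. It therefore vanishes only on an exceptional set, which gives the genericity statement. Endpoint non-identifiability under homogeneous retention follows from the uniform statement in the previous step.

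\textbf{Main obstacle.} I expect the hardest step to be making "generic" precise: proving the $\eta_m$-coefficient of the mean time is not identically zero. I would first try the single-target case, where reset and continuation differ explicitly through $\partial_sP_{11}$ versus $\partial_s\widetilde J_1$.
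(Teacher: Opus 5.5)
Your proposal is correct on everything the paper's proof actually establishes, and it improves on the paper at one step. The only part that does not hold up is the extra genericity argument for timing.

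\textbf{Endpoints and homogeneous retention.} Your endpoint computations are the paper's: Sherman--Morrison on the rank-one kernel $\overline{\bm\pi}_m\mathbf 1^\top$ at $\eta_m=0$, and a factorization of $\bm W_m-\mathbf E_m\bm\Sigma_m$ at $\eta_m=1$. The paper factors on the right, as $(\mathbf I-\mathbf E_m\bm\Sigma_m\bm W_m^{-1})\bm W_m$. This gives $\bm\pi^{(1)}_m=(\mathbf I-\mathbf E_m\bm\Sigma_m\bm W_m^{-1})^{-1}\overline{\bm\pi}_m$ with no outer factors. That is your expression conjugated by $\bm W_m$, not by $\bm\Sigma_m$. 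It is this form that puts $\|\mathbf E_m\bm\Sigma_m\bm W_m^{-1}\|_1\le\|\mathbf E_m\|_1\max_j\sigma_{jm}/(1-\sigma_{jm})$ into the error itself; conjugating by $\bm\Sigma_m$ needs $\bm\Sigma_m$ invertible and controls only the spectral radius.

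The genuine difference is in the homogeneous-retention claim. The paper computes exactly at $\mathbf E_m=\mathbf 0$, where $\mathbf P^{(\eta)}_m(0)\overline{\bm\pi}_m=\overline{\bm\pi}_m$ for every $\eta_m$, and then perturbs in $\mathbf E_m$; uniformity in $\eta_m$ rests implicitly on $\|\sigma_m\mathbf P^{(\eta)}_m(0)\|_1\le\sigma_m$. You instead use $\mathbf 1^\top\bm\pi^{(\eta)}_m=1$ to collapse the reset part, which gives the exact identity $(1-\sigma_m\eta_m)(\bm\pi^{(\eta)}_m-\overline{\bm\pi}_m)=\sigma_m\eta_m\mathbf E_m\bm\pi^{(\eta)}_m$. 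Hence $\|\bm\pi^{(\eta)}_m-\overline{\bm\pi}_m\|_1\le\frac{\sigma_m}{1-\sigma_m}\|\mathbf E_m\|_1$ for all $\eta_m$. This is non-perturbative and carries precisely the constant the statement names. With the quoted two-disk data it gives $\pi_1=0.7830/1.1488\approx0.6816$ at $\eta_m=1$, which is the paper's $0.6815$ up to rounding of $\mathbf P(0)$.

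\textbf{Timing.} Your core observation here is the paper's whole argument: the $\partial_s\mathbf P^{(\eta)}_m(0)$ term survives multiplication by $\bm W_m$, and cancellations are not excluded. The analytic-genericity step you add does not go through as written, for two reasons.
\begin{itemize}
\item The displayed term is not affine in $\eta_m$. The matrix $\mathbf M$ also depends on $\eta_m$, and its derivative brings in $\Delta\mathbf P_m(0)$ as well.
\item The witness is misplaced. Mean timing sees $\phi$ only through $\langle\tau\rangle$, so non-exponentiality is irrelevant. An $N=1$ configuration also lies in a different parameter space, so it cannot show that the $N$-target function is not identically zero.
\end{itemize}

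A direct computation is simpler and stronger. Take $\bm\Sigma_m=\sigma_m\mathbf I$ and a common residence law. Then $\mathbf 1^\top[\mathbf I-\sigma_m\mathbf P^{(\eta)}_m(0)]^{-1}=\mathbf 1^\top/(1-\sigma_m)$, and this yields exactly
\[
\mathbb E[T\mid\mathrm{int}]=\frac{\langle\tau\rangle}{1-\sigma_m}+\mathcal T^{\mathrm{ads}}_m+\frac{\sigma_m}{1-\sigma_m}\Bigl[(1-\eta_m)\mathcal T^{\mathrm{ads}}_m+\eta_m\,\mathbf t_m^\top\bm\pi^{(\eta)}_m\Bigr],
\qquad
t_{k,m}=-\sum_j\partial_sP^{\mathrm{cont}}_{jk,m}(0).
\]
Your identity gives $\partial_{\eta}\bm\pi^{(\eta)}_m=O(\nu)$, so the $\eta_m$-slope is $\frac{\sigma_m}{1-\sigma_m}(\mathbf t_m^\top\overline{\bm\pi}_m-\mathcal T^{\mathrm{ads}}_m)+O(\nu)$.

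In narrow capture, $t_{k,m}=O(1)$, since it is a re-adsorption time from the surface, whereas $\mathcal T^{\mathrm{ads}}_m=O(\nu^{-1})$. So for $\sigma_m>0$ the slope is nonzero for every admissible layout once $\varepsilon$ is small, with no analyticity argument needed. For $N=1$ this is visible in Table~\ref{tab:sigma}: its entries equal $1.5074+\frac{\sigma}{1-\sigma}\cdot\frac1{4\pi}$ to four decimals. That is, in the reduced model $t_1=|\Omega|/(2\pi\kappa'\ell)\approx0.080$, against $\mathcal T^{\mathrm{ads}}=0.5074$.
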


Beyond the spatial and kinetic features of the search process, evaluating biological plausibility requires analyzing how sensitive these outputs are to parameter variations:

\begin{proposition}[Well-posedness and continuous dependence]
\label{prop:lip}
Let $\Omega_\varepsilon$ be $C^2$, let $D_m>0$, and assume nonnegative
kinetic rates, nonnegative $L^2$ initial data, and
$I_m\in L^1(0,T;L^2(\Omega_\varepsilon))$. The bulk--surface system
\eqref{eq:bulk}--\eqref{eq:surface} has a unique nonnegative mild solution and
satisfies the mass identity of Remark~\ref{rem:mass}. If, in addition, the
matrix in \eqref{eq:amp} is uniformly invertible on a compact admissible
parameter set and the state remains in its compact invariant region, then every
Lipschitz output obeys
\[
\|\mathbf Z(T;\boldsymbol\theta)-\mathbf Z(T;\boldsymbol\theta_0)\|
\leq C_T\|\boldsymbol\theta-\boldsymbol\theta_0\|.
\]
Acceptance probabilities are continuous only under the additional condition
\begin{equation}
\Pr\{\mathbf Z(T;\boldsymbol\theta)\in\partial\mathcal A\}=0,
\label{eq:nullbdry}
\end{equation}
which is assumed rather than inferred from continuity of the parameter law.
\end{proposition}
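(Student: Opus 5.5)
The proof has three parts: well-posedness of the bulk--surface system, Lipschitz dependence of outputs on parameters, and continuity of acceptance probabilities under the null-boundary condition \eqref{eq:nullbdry}.

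\textbf{Well-posedness.} Since \eqref{eq:bulk}--\eqref{eq:surface} is linear for each species, we pose it as an abstract Cauchy problem on $X=L^2(\Omega_\varepsilon)\times\prod_jL^2(\partial\mathcal U_j)$ with state $(u_m,(q_{jm})_j)$. The generator is
\[
\mathcal A(u,q)=\bigl(D_m\Delta u-\gamma_mu,\ \kappa_{jm}u|_{\partial\mathcal U_j}-(\gamma^d_{jm}+\bar\gamma_{jm})q_j\bigr),
\]
with domain given by the Neumann condition on $\partial\Omega$ and the Robin-type exchange condition on each $\partial\mathcal U_j$. The plan is to work with the associated closed form
\[
a((u,q),(v,p))=D_m\!\int\nabla u\cdot\nabla v+\gamma_m\!\int uv+\sum_j\int_{\partial\mathcal U_j}\bigl(\kappa_{jm}u-\gamma^d_{jm}q_j\bigr)v+\dots
\]
The form is not symmetric, but it becomes quasi-accretive after weighting the surface component by $\kappa_{jm}/\gamma^d_{jm}$, or more simply after absorbing the boundary cross terms with the trace inequality $\|u\|^2_{L^2(\partial\mathcal U_j)}\le\delta\|\nabla u\|^2+C_\delta\|u\|^2$. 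This is where the $C^2$ regularity of $\Omega_\varepsilon$ is used. Lions' theorem, or Lumer--Phillips, then gives an analytic $C_0$-semigroup. The mild solution is the variation-of-constants formula, which is well defined because $I_m\in L^1(0,T;L^2)$.

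\textbf{Positivity and mass balance.} For positivity, we check that the semigroup is positive via the Beurling--Deny criterion: test the form with $(u^-,q^-)$ and use that the off-diagonal couplings $\kappa_{jm}u\to q$ and $\gamma^d_{jm}q\to$ flux are nonnegative. This is the one point where a careful sign computation is needed, and we regard it as the main technical obstacle. If the Beurling--Deny computation proves awkward, the fallback is a Trotter splitting into bulk diffusion with homogeneous Neumann--Robin conditions, plus an exchange ODE with a Metzler matrix; each piece is positive, so the limit is positive. The mass identity \eqref{eq:massbal} is obtained by testing with the constant function $1$, first for strong solutions with smooth data as computed in Remark~\ref{rem:mass}, and then passing to mild solutions by density and continuity of the semigroup.

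\textbf{Lipschitz dependence.} We factor the output map as $\boldsymbol\theta\mapsto\widetilde{\mathcal J}\mapsto\mathcal J\mapsto\boldsymbol\upsilon\mapsto\mathbf Z$.
\begin{itemize}
\item By \eqref{eq:amp} and \eqref{eq:fluxred}, the transformed flux is a rational function of the entries of $\bm{\mathcal G}_m$, $\bm\Psi_m$ and $\bm\Gamma_m$. These entries are smooth in the parameters, and the inverse is uniformly bounded by hypothesis. By the resolvent identity $B^{-1}-\widehat B^{-1}=B^{-1}(\widehat B-B)\widehat B^{-1}$, the map $\boldsymbol\theta\mapsto\widetilde{\mathcal J}$ is Lipschitz uniformly on the relevant Laplace contour.
\item Lipschitz control must then be transferred to $\mathcal J\in L^1(0,T)$. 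One route is directly in the time domain: take the difference of two solutions of the linear system and use the semigroup bound, since parameter changes enter as bounded perturbations of the generator or source. A Duhamel estimate with Gronwall gives $\|\mathcal J-\widehat{\mathcal J}\|_{L^1(0,T)}\le C\|\boldsymbol\theta-\boldsymbol\theta_0\|$. The trace of $q$ is controlled because the $q$-equation is an ODE driven by the trace of $u$, which lies in $L^2(0,T;H^{1/2})$ by the energy estimate.
\item Proposition~\ref{prop:app_input_state}, together with positive invariance in $\mathcal K$ (Proposition~\ref{prop:app_positive_invariance}), converts the flux bound into a bound on the state. Lipschitz dependence on the biological parameters themselves enters through the same Gronwall argument, since the right-hand side of \eqref{eq:transformed_problem} is Lipschitz in its coefficients on $\mathcal K$.
\item Composing with the Lipschitz output map gives the constant $C_T$.
\end{itemize}

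\textbf{Acceptance probabilities.} Let $\boldsymbol\theta_n\to\boldsymbol\theta$ in law (or deterministically). The Lipschitz bound gives $\mathbf Z(T;\boldsymbol\theta_n)\to\mathbf Z(T;\boldsymbol\theta)$. The Portmanteau theorem then yields $\Pr\{\mathbf Z_n\in\mathcal A\}\to\Pr\{\mathbf Z\in\mathcal A\}$, exactly when \eqref{eq:nullbdry} holds. A one-line example, an atom of $\mathbf Z$ sitting on $\partial\mathcal A$, shows the condition cannot be dropped. This justifies stating \eqref{eq:nullbdry} as an assumption rather than deriving it.
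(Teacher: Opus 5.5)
Your well-posedness, positivity, mass-balance and acceptance-probability arguments are essentially the paper's. The paper also builds the quasi-coercive form on $\mathbb H$ and uses the analytic semigroup and Duhamel's formula. It proves positivity by testing with negative parts and applying Gronwall. Your Beurling--Deny (Ouhabaz) check is the same computation, $\mathfrak a(\mathbf z^+,\mathbf z^-)=-\sum_j\int_{\partial\mathcal U_j}(\gamma^d_{jm}q_j^+u^-+\kappa_{jm}u^+q_j^-)\,\mathrm dS\le0$, and it is the route to keep. Drop the Trotter fallback: the exchange acts through the trace and is not a bounded ODE on $L^2(\Omega_\varepsilon)$. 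Your Portmanteau step is equivalent to the paper's almost-sure convergence plus dominated convergence.

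The step that fails as written is the transfer to $\mathcal J\in L^1(0,T)$ in your second bullet. The parameters that matter most are $D_m$, $\kappa_{jm}$, and $\gamma^d_{jm}$ through the exchange condition. These are not bounded perturbations of the generator. They change $D(\mathcal A)$, because the Robin/exchange condition is part of the domain. Hence $(\mathcal A_{\boldsymbol\theta}-\mathcal A_{\boldsymbol\theta_0})\mathbf z_{\boldsymbol\theta_0}(t)$ is not an element of $\mathbb H$, and the norm-based Duhamel--Gronwall estimate you describe has nothing to act on. Moreover, $\ell_j$, which the acceptance model varies, changes $\Omega_\varepsilon$ and therefore the state space itself.

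The repair is variational. Subtract the two weak formulations and test with $\mathbf w=\mathbf z_{\boldsymbol\theta}-\mathbf z_{\boldsymbol\theta_0}$. Then use $|(\mathfrak a_{\boldsymbol\theta}-\mathfrak a_{\boldsymbol\theta_0})(\mathbf z_{\boldsymbol\theta_0},\mathbf w)|\le C\|\boldsymbol\theta-\boldsymbol\theta_0\|\,\|\mathbf z_{\boldsymbol\theta_0}\|_{\mathbb V}\|\mathbf w\|_{\mathbb V}$, absorb half into the quasi-coercivity bound, and close with the energy estimate $\mathbf z_{\boldsymbol\theta_0}\in L^2(0,T;\mathbb V)$. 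Geometric parameters must first be pulled back to a fixed reference domain. Since $\mathcal J_{jm}=\bar\gamma_{jm}\int_{\partial\mathcal U_j}q_{jm}\,\mathrm dS$ is a bounded functional on $\mathbb H$, this gives the flux bound in $L^\infty(0,T)$ directly, and your $H^{1/2}$-trace remark is unnecessary.

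There is also a mismatch of models. Your time-domain route, once repaired, controls the output driven by the full PDE. It never uses the hypothesis that the matrix in \eqref{eq:amp} is uniformly invertible, so your first bullet is not actually composed with the rest.

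The paper's chain stays inside the reduced model:
\begin{itemize}
\item the resolvent identity gives Lipschitz dependence of $\mathbf A_m$;
\item \eqref{eq:fluxred} transfers this to the productive flux;
\item Proposition~\ref{prop:app_input_state}, with the Lipschitz vector field on $\mathcal K$, yields the output bound.
\end{itemize}
The paper is itself terse about how the transform bound becomes a time-domain bound. Even so, your claim of uniform Lipschitz control ``on the relevant Laplace contour'' does not follow from the hypothesis. Invertibility is assumed only on a compact set of real $s$. On an unbounded Bromwich line the reduction is not even valid once $\varepsilon\ell_j\sqrt{|s|/D_m}$ is no longer small.

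You have two options:
\begin{itemize}
\item state the result for the full-PDE output and drop the \eqref{eq:amp} hypothesis; or
\item for the reduced output actually used in the computations, transfer through the $s=0$ quantities that determine the reconstructed flux, namely $\pi_j$ and the two moments feeding \eqref{eq:gammafit}. This needs control of $s$-derivatives at $s=0$, not just invertibility at real $s$.
\end{itemize}
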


Formally, the three-dimensional analogue is regular in $\varepsilon$. For a
spherical target,
\[
\Lambda_{jm}=\frac{\kappa'_{jm}g_{jm}\ell_j^2}
{\kappa'_{jm}g_{jm}\ell_j+D_m},\qquad
\widetilde{\mathcal J}_{jm}\sim4\pi D_m\varepsilon
\frac{\bar\gamma_{jm}}{s+\bar\gamma_{jm}}\Lambda_{jm}\Gamma_m(\mathbf x_j,s).
\]
Off-diagonal Green interactions enter at $O(\varepsilon)$; for nonspherical
targets $\Lambda_{jm}$ is replaced by capacitance
\cite{smoluchowski1917versuch,collins1949diffusion,wang2025multi,grebenkov2020diffusion}.
No remainder estimate or numerical verification is claimed in three dimensions.

While the preceding propositions focus on the logarithmic expansions inherent to two-dimensional domains, the methodology naturally extends to 3D. In three dimensions, the expansion proceeds directly in $\varepsilon$. With the same reactivity scaling, the inner field $\widetilde U_{jm}=\Gamma_m(\mathbf x_j,s)(1-\Lambda_{jm}/\varrho)$ satisfies the inner Robin condition with $\Lambda_{jm}=\kappa'_{jm}g_{jm}\ell_j^2/(\kappa'_{jm}g_{jm}\ell_j+D_m)$, and the total current into $\partial\mathcal U_j$ is $4\pi D_m\varepsilon\Lambda_{jm}\Gamma_m(\mathbf x_j,s)$; multiplying by the
internalized fraction $\bar\gamma_{jm}/(s+\bar\gamma_{jm})$ gives the flux $\widetilde{\mathcal J}_{jm}\sim4\pi D_m\varepsilon\, [\bar\gamma_{jm}/(s+\bar\gamma_{jm})]\Lambda_{jm}(s)\Gamma_m(\mathbf x_j,s)$, interpolating between reaction-limited and Smoluchowski behaviour \cite{smoluchowski1917versuch,collins1949diffusion,grebenkov2020diffusion,gao2022rolling}.
The far field is a monopole of strength $4\pi D_m\varepsilon\Lambda_{jm}\Gamma_m$, and matching monopoles to the outer representation gives an $N\times N$ interaction system with diagonal terms set by $\Lambda_{jm}$ and off-diagonal terms proportional to $G_m(\mathbf x_i,\mathbf x_j;s)$, so pairwise competition enters at $O(\varepsilon)$. For non-spherical compartments $\Lambda_{jm}$ is replaced by
the shape-dependent electrostatic capacitance. Unlike two dimensions, the expansion is regular in $\varepsilon$ and no logarithmic resummation is needed. We refer the reader to Appendix~\ref{app:formal_3d} for the formal matched-asymptotic expansion for three-dimensional spherical targets.

\section{Numerical setup}
\label{sec:numerics}
Two regimes are used and never substituted for one another. The conservative regime has reflecting walls and $\gamma_m=0$; it is the regime of Assumption~\ref{ass:cons} and carries all normalized splitting-probability statements. The lossy regime has $\gamma_m>0$ (baseline $0.4$) and is used only for the annular verification, the loss-channel diagnostic, the spacing calculation and the sensitivity audits, where Remark~\ref{rem:lossy} governs.

Except for the annular benchmark the chamber is $\Omega=[0,1]^2$ with $\varepsilon=0.02$ ($\nu=0.2556$), $D=1$ and $\ell_j=1$. The Green's function is evaluated from the Neumann eigenfunction series
\begin{equation}
G(\mathbf x,\mathbf z;s)=\sum_{\mathbf n\ge\mathbf 0}
\frac{\varphi_{\mathbf n}(\mathbf x)\varphi_{\mathbf n}(\mathbf z)}
{\lambda_{\mathbf n}+s+\gamma},\quad
\varphi_{\mathbf n}=c_{n_1}c_{n_2}\cos(n_1\pi x_1)\cos(n_2\pi x_2),\quad
\lambda_{\mathbf n}=D\pi^2|\mathbf n|^2,
\label{eq:series}
\end{equation}
with $c_0=1$, $c_n=\sqrt2$; the $(0,0)$ mode is omitted in the conservative case, giving the pseudo-Green's function of Remark~\ref{rem:pseudo}. Off the diagonal this converges: at ${\bf x}=(0.5,0.5)$, ${\bf z}=(0.25,0.25)$ the change from cutoff 200 to 300 in each index is $7\times 10^{-7}$. On the diagonal it does not --- the raw sum diverges logarithmically, growing by $\approx0.11$ per doubling, which is $\log 2/2\pi$ ($3.0969$, $3.2057$, $3.3152$, $3.4251$, $3.5352$ at cutoffs $50$--$800$, evaluated at ${\bf x}={\bf z}=(0.5,0.5)$ in the lossy regime $\gamma=0.4$, where the ${\bf 0}$ mode is finite and retained; the conservative sum diverges identically once that mode is dropped) --- so the regular part is obtained by an Ewald splitting. To specify both regimes, put $a=s+\gamma>0$ and
\[
\mathcal I_a(r;t_*)=\int_0^{t_*}\frac{\exp\bigl[-at-r^2/(4Dt)\bigr]}{4\pi Dt}\dd t.
\]
For the unit square the Neumann images are $\mathbf z_{\mathbf k,\boldsymbol\epsilon}=2\mathbf k+
(\epsilon_1z_1,\epsilon_2z_2)$, $\mathbf k\in\mathbb Z^2$ and $\boldsymbol\epsilon\in\{-1,1\}^2$. The lossy Green function and its regular part are evaluated as
\begin{align}
G_a(\mathbf x,\mathbf z)&=\sum_{\mathrm{img}}
\mathcal I_a(|\mathbf x-\mathbf z_{\mathrm{img}}|;t_*)
+\sum_{\mathbf n\ge\mathbf0}\frac{\varphi_{\mathbf n}(\mathbf x)
\varphi_{\mathbf n}(\mathbf z)e^{-(\lambda_{\mathbf n}+a)t_*}}
{\lambda_{\mathbf n}+a},\nonumber\\
R_a(\mathbf z,\mathbf z)&=\frac{\log(4Dt_*)-\gamma_{\mathrm{EM}}}{4\pi D}
+\int_0^{t_*}\frac{e^{-at}-1}{4\pi Dt}\dd t
+\sum_{\mathrm{img}\ne\mathrm{self}}\mathcal I_a(
|\mathbf z-\mathbf z_{\mathrm{img}}|;t_*)\nonumber\\
&\hspace{2cm}+\sum_{\mathbf n\ge\mathbf0}
\frac{\varphi_{\mathbf n}(\mathbf z)^2e^{-(\lambda_{\mathbf n}+a)t_*}}
{\lambda_{\mathbf n}+a}.
\label{eq:ewaldloss}
\end{align}
The conservative pseudo-Green's function is the zero-mode-subtracted limit
$a\downarrow0$:
\begin{align}
G^{0}(\mathbf x,\mathbf z)&=\frac{1}{4\pi D}\sum_{\mathrm{img}}
E_1\Bigl(\tfrac{|\mathbf x-\mathbf z_{\mathrm{img}}|^2}{4Dt_*}\Bigr)
-\frac{t_*}{|\Omega|}
+\sum_{\mathbf n\neq\mathbf 0}\frac{\varphi_{\mathbf n}(\mathbf x)
\varphi_{\mathbf n}(\mathbf z)e^{-\lambda_{\mathbf n}t_*}}{\lambda_{\mathbf n}},
\nonumber\\
R^{0}(\mathbf z,\mathbf z)&=\frac{\log(4Dt_*)-\gamma_{\mathrm{EM}}}{4\pi D}
+\frac{1}{4\pi D}\!\!\sum_{\mathrm{img}\neq\mathrm{self}}\!\!
E_1\Bigl(\tfrac{|\mathbf z-\mathbf z_{\mathrm{img}}|^2}{4Dt_*}\Bigr)
-\frac{t_*}{|\Omega|}
+\sum_{\mathbf n\neq\mathbf 0}\frac{\varphi_{\mathbf n}(\mathbf z)^2
e^{-\lambda_{\mathbf n}t_*}}{\lambda_{\mathbf n}},
\label{eq:ewald}
\end{align}

All image and spectral sums now converge exponentially. Here $12$ image shells means $\|\mathbf k\|_\infty\le12$.

The splitting is not merely a convenience for the diagonal. The obvious alternative in the lossy case --- the direct modified-Bessel image sum $G_a=\sum_{\mathrm{img}}K_0(\sqrt{a/D}\,r_{\mathrm{img}})/(2\pi D)$ --- converges like $\exp(-2\sqrt{a/D}\,\|\mathbf k\|_\infty)$, so a cutoff adequate at $\gamma=O(1)$ silently fails as $\gamma\downarrow0$: a fixed $\|\mathbf k\|_\infty\le6$ reproduces $G$ to $5\times10^{-4}$ relative error at $\gamma=0.4$ but underestimates it by $64\%$ at $\gamma=0.004$, because the screening length $\sqrt{D/\gamma}$ then far exceeds the chamber. Any implementation using that representation must take $\|\mathbf k\|_\infty\gtrsim13\sqrt{D/\gamma}$. In \eqref{eq:ewaldloss} the
images enter only through $\mathcal I_a$, whose Gaussian factor $\exp[-r^2/(4Dt_*)]$ is independent of $a$, so $12$ shells suffice uniformly in $\gamma$; \eqref{eq:ewaldloss} and the converged Bessel sum agree to machine precision at every $\gamma$ in Table~\ref{tab:loss}. With $t_*=0.06$, $12$ image shells and $60$ modes per index the values are stable to ten significant figures and unchanged over $t_*\in[0.03,0.10]$; for $\mathbf x_1=(0.5,0.5)$, $\mathbf x_0=(0.25,0.25)$ they are $G^0=-0.0137897250$ and $R^0=-0.2085777932$.

\begin{table}[htbp]\centering\small
\caption{Eventual uptake $\pi_1$ against $\sigma$ as bulk loss is removed.}
\label{tab:loss}
\begin{tabular}{@{}c|rrrrr|r@{}}
\toprule
$\gamma$&$\sigma=0$&$0.30$&$0.60$&$0.90$&$0.95$&spread\\
\midrule
$0.400$&0.8236&0.8114&0.7824&0.6257&0.4938&0.330\\
$0.100$&0.9493&0.9452&0.9351&0.8703&0.7966&0.153\\
$0.020$&0.9894&0.9885&0.9863&0.9711&0.9515&0.038\\
$0.004$&0.9979&0.9977&0.9972&0.9941&0.9899&0.008\\
\bottomrule
\end{tabular}
\end{table}

To model the physical delivery, the source is configured as a constant-rate infusion over the assay window,
\begin{equation}
I_m(\mathbf x,t)=a_{\mathrm{dose}}\,\delta(\mathbf x-\mathbf x_0)
\mathbf 1_{[0,t_{\mathrm d}]}(t),
\qquad a_{\mathrm{dose}}=1.0,\qquad t_{\mathrm d}=T=6,
\label{eq:dosing}
\end{equation}
so that
\begin{equation}
\mathcal J_j(t)=a_{\mathrm{dose}}\pi_j
\bigl[H_j(t)-H_j(t-t_{\mathrm d})\bigr],
\label{eq:pulseconv}
\end{equation}
where $\pi_j=\widetilde{\mathcal J}_j(0)$ and $H_j$ is the conditional internalization-time distribution of a unit release, matched to the exact first two conditional moments of \eqref{eq:fluxred}. The splitting mass is therefore carried explicitly when $\pi_j<1$; in the conservative one-target calculation below, $\pi_1=1$. In a conservative chamber every infused molecule is eventually internalized, so for a single organoid the steady-state productive flux equals $a_{\mathrm{dose}}$ identically, independently of $\sigma$, $\kappa'$, $\ell$ and geometry: the dose rate is the plateau flux, and no dimensionless dose multiplier is required. Since $\pi_{1m}=1$ for every $\sigma$, retention acts only on the approach to that plateau, so the readout horizon is set to a small multiple of the mean first-adsorption time
$\mathcal T^{\mathrm{ads}}=0.5074$ (standard deviation $0.5014$) rather than to a long horizon, at which the terminal state is $\sigma$-independent to three decimals.

To complete the baseline numerical specification, unless a subsection explicitly overrides the transport regime, every one-target phenotype calculation uses $\kappa'=2.0$, $\ell=1$, $\bar\gamma=1$, $\gamma=0$, $\varepsilon=0.02$, $\mathbf x_1=(0.5,0.5)$, $\mathbf x_0=(0.25,0.25)$, and
\begin{gather}
(r_{\mathrm{grow}},k_{PD},b_{PD},k_{DM},b_{DM},d_0,k_{\mathrm{stress}},
k_{\mathrm{ap,clr}})=(0.35,0.08,0.60,0.05,0.45,0.10,0.02,0.15),\nonumber\\
(\alpha_Q,\alpha_R,\Theta,n_{\mathrm H})=(0.55,1.70,0.80,3),\qquad
\boldsymbol\upsilon(0)=(0.70,0.20,0.05,0.05,0.60).
\label{eq:params}
\end{gather}
Baseline audit ($\sigma=0.33$): $\upsilon_{\mathrm{mt}}(T)=0.6860$, $\upsilon_{\mathrm{ds}}(T)=0.0493$.

For the time-domain reconstruction, the transform \eqref{eq:fluxred} is not inverted exactly. Because two moments do not determine a distribution, the reconstruction proceeds through several structured steps:
\begin{enumerate}[label=(\roman*)]
\item \emph{Moments.} $\widetilde{\mathcal J}_{jm}(s)$ is available analytically.
For the conservative one-target calculation define $G_s=G(\mathbf x_1,\mathbf x_0;s)$ and $R_s=R(\mathbf x_1,\mathbf x_1;s)$. Each is expanded as $c_0+\sum_{k\ge1}(-1)^kc_ks^k$ with $c_k=\sum_{\mathbf n\neq\mathbf 0} \varphi_{\mathbf n}(\mathbf x)\varphi_{\mathbf n}(\mathbf z) \lambda_{\mathbf n}^{-(k+1)}$ for $k\ge1$ --- absolutely convergent, summed to mode cutoff $400$ --- and $c_0$ from \eqref{eq:ewald}; the series is truncated at order $10$ (radius of convergence $\lambda_{\min}=D\pi^2\approx9.87$; order $4$ already suffices to ten digits). The first two moments
\[
T^{(1)}_j=-\frac{\partial_s\widetilde{\mathcal J}_{jm}(0)}
{\widetilde{\mathcal J}_{jm}(0)},\qquad
\varsigma_j=\Bigl[\frac{\partial_s^2\widetilde{\mathcal J}_{jm}(0)}
{\widetilde{\mathcal J}_{jm}(0)}-(T^{(1)}_j)^2\Bigr]^{1/2}
\]
are taken by fourth-order central differences with $h=10^{-3}$; the values are stable to ten significant figures over $h\in[10^{-4},3\times10^{-3}]$. 
\item \emph{Family.} $H_j$ is the Gamma distribution matched to those two moments,
\begin{equation}
H_j(t)=P\!\left(k_j,\ \frac{t}{\vartheta_j}\right),\qquad
k_j=\Bigl(\frac{T^{(1)}_j}{\varsigma_j}\Bigr)^{2},\qquad
\vartheta_j=\frac{\varsigma_j^{2}}{T^{(1)}_j},
\label{eq:gammafit}
\end{equation}
with $P$ the regularized lower incomplete gamma function. For the baseline ($\sigma=0.33$) this gives $T^{(1)}=1.5466$, $\varsigma=1.1705$, hence $k=1.745888$ and $\vartheta=0.885854$. The Gamma family is chosen for support on $[0,\infty)$ and an exponential tail, matching the analytic structure of $\widetilde{\mathcal J}_{jm}$ away from the heavy-tailed regime of Remark~\ref{rem:tails}, where this reconstruction must not be used. 
\item \emph{The washout term.} $H_j$ is a distribution function supported on $[0,\infty)$, so $H_j(t-t_{\mathrm d})\equiv0$ for $t<t_{\mathrm d}$; this is imposed by clamping the argument at zero, which is exact rather than an approximation since $P(k,0)=0$. In every phenotype experiment reported in Tables~\ref{tab:null}, \ref{tab:dosing}, \ref{tab:sigma} and \ref{tab:residence}, $t_{\mathrm d}=T$, so the term is identically zero on $[0,T]$ and $\mathcal J_j(t)=a_{\mathrm{dose}}\pi_jH_j(t)$ throughout; it contributes only in the dose-then-washout diagnostic of \S\ref{subsec:retention}, where $t_{\mathrm d}=9<T=20$.
\item \emph{Tolerances.} The state equations \eqref{eq:state} are integrated by fixed-step classical Runge--Kutta with $900$ steps on $[0,T]$. This is verified against an adaptive DOP853 solution at relative tolerance $10^{-12}$ and absolute tolerance $10^{-14}$: the two agree to eight significant figures ($\upsilon_{\mathrm{mt}}(T)=0.68598423$, $\upsilon_{\mathrm{ds}}(T)=0.04932634$), and the fixed-step result is unchanged from $200$ steps upward. No quadrature enters the flux, which is closed-form
given \eqref{eq:gammafit}.
\item \emph{Uniformity across experiments.} The same reconstruction is used everywhere, with one exception. In the null model of \S\ref{sec:null} the organoids differ in their share $\pi_j$ but are given a common ramp $(T^{(1)},\varsigma)=(1.55,1.15)$ rather than a per-organoid one. We have measured what that costs. Differentiating the multi-target reduced solve in $s$ gives the conditional $(T^{(1)}_j,\varsigma_j)$ of every organoid at no extra modelling cost, and repeating the ensemble of \S\ref{subsec:nullspread} with per-organoid ramps gives median $\mathrm{CV}(\upsilon_{\mathrm{mt}})=0.609$ against $0.617$ for the common ramp, and median $\mathrm{CV}(\upsilon_{\mathrm{ds}})=0.857$ against $0.838$. The common ramp overstates the maturation CV by $1.42\%$ (median; 10th to 90th percentile $0.62\%$ to $2.52\%$) and understates the disease-score CV by
$2.25\%$. Across the whole ensemble the per-organoid $T^{(1)}$ spans $0.965$--$1.481$.

The sign is worth dwelling on, because the plausible argument runs the other
way: distant organoids have both a smaller plateau and a slower ramp, and
slower ramps lower maturation, so the two effects look as though they should
reinforce. That reasoning fails for $\upsilon_{\mathrm{mt}}$. The common ramp is slower than the typical
per-organoid ramp over most of the ensemble, which compresses the high-flux tail
where the maturation response saturates and so inflates the CV. The two signs
differ between the maturation and disease-score readouts, which is why the
direction cannot be argued from monotonicity alone. Either way the effect is
about one per cent and does not move any conclusion.
\end{enumerate}

\begin{table}[htbp]\centering\small
\caption{Transport-only spread under identical biology, $N=10$, $2000$ layouts
from \eqref{eq:ensemblelaw} at $d_{\mathrm{src}}=0.10$. Brackets are $95\%$
bootstrap intervals for the percentile itself.}
\label{tab:null}
\begin{tabular}{@{}lccc@{}}
\toprule
&10th pct&median&90th pct\\
\midrule
$\mathrm{CV}$ of $\pi_j$&$0.820$ \tiny$[0.802,0.834]$&$1.126$ \tiny$[1.113,1.142]$&$1.503$ \tiny$[1.474,1.516]$\\
$\max_j\pi_j/\min_j\pi_j$&$16.7$ \tiny$[15.5,17.9]$&$44.0$ \tiny$[42.1,45.3]$&$95.2$ \tiny$[90.2,100.1]$\\
$\mathrm{CV}$ of $\upsilon_{\mathrm{mt}}(T)$&$0.488$ \tiny$[0.478,0.497]$&$\mathbf{0.623}$ \tiny$[0.618,0.627]$&$0.739$ \tiny$[0.730,0.746]$\\
$\mathrm{CV}$ of $\upsilon_{\mathrm{ds}}(T)$&$0.681$ \tiny$[0.674,0.690]$&$\mathbf{0.835}$ \tiny$[0.829,0.843]$&$1.020$ \tiny$[1.005,1.034]$\\
\bottomrule
\end{tabular}
\end{table}

\begin{table}[htbp]\centering\small
\caption{Transport-only maturation CV by dosing geometry, after equalizing
array-mean exposure. Brackets are $95\%$ bootstrap intervals.}
\label{tab:dosing}
\begin{tabular}{@{}lccc@{}}
\toprule
&10th pct&median&90th pct\\
\midrule
point source (localized port)&$0.488$ \tiny$[0.478,0.497]$&$0.623$ \tiny$[0.618,0.627]$&$0.739$ \tiny$[0.730,0.746]$\\
uniform source (medium exchange)&$0.082$ \tiny$[0.079,0.084]$&$\mathbf{0.124}$ \tiny$[0.122,0.126]$&$0.181$ \tiny$[0.178,0.186]$\\
\bottomrule
\end{tabular}
\end{table}

\begin{table}[htbp]\centering\small
\caption{Conservative $\sigma$ sweep, continued search, $\kappa'=2.0$.}
\label{tab:sigma}
\begin{tabular}{@{}rrrrrrr@{}}
\toprule
$\sigma$&$\pi_1$&$\mathcal T^{\mathrm{tot}}_1$&sd&
$\upsilon_{\mathrm{mt}}(T)$&$\upsilon_{\mathrm{ds}}(T)$&$\upsilon_{\mathrm{ap}}(T)$\\
\midrule
0.00&1.0000&1.5074&1.1187&0.6899&0.0474&0.1038\\
0.10&1.0000&1.5162&1.1305&0.6890&0.0478&0.1042\\
0.20&1.0000&1.5273&1.1451&0.6879&0.0484&0.1046\\
0.33&1.0000&1.5466&1.1705&0.6860&0.0493&0.1053\\
0.50&1.0000&1.5870&1.2229&0.6819&0.0514&0.1069\\
0.60&1.0000&1.6268&1.2737&0.6780&0.0535&0.1083\\
0.70&1.0000&1.6931&1.3567&0.6714&0.0571&0.1108\\
0.80&1.0000&1.8257&1.5178&0.6585&0.0647&0.1155\\
0.90&1.0000&2.2236&1.9758&0.6212&0.0913&0.1291\\
0.95&1.0000&3.0194&2.8380&0.5543&0.1584&0.1534\\
\bottomrule
\end{tabular}
\end{table}

\begin{table}[htbp]\centering\small
\caption{Matched mean and matched $\sigma$; the laws differ in dispersion and
in every higher moment. $\mathcal T^{\mathrm{tot}}$ and its standard deviation
are analytic; the terminal states are from exact numerical inversion of
the reduced transform (\S\ref{subsec:gammaverify}). The surrogate would give $0.4818$, $0.4929$, $0.5354$ and $0.2468$, $0.2396$, $0.2051$; it preserves the maturation ordering but reverses the first two disease-score entries, whose exact separation is only $0.002$.}
\label{tab:residence}
\begin{tabular}{@{}lrrrr@{}}
\toprule
residence law&$\mathcal T^{\mathrm{tot}}$&sd&
$\upsilon_{\mathrm{mt}}(T)$&$\upsilon_{\mathrm{ds}}(T)$\\
\midrule
Gamma, $\mathrm{CV}_\tau=0.5$&3.7685&3.1264&0.4879&0.2295\\
exponential, $\mathrm{CV}_\tau=1.0$&3.7685&3.4132&0.4954&0.2316\\
long-tailed, $\mathrm{CV}_\tau=2.1$&3.7685&4.4916&0.5358&0.1848\\
\bottomrule
\end{tabular}
\end{table}

\emph{Size of the approximation.} Within the two-moment family, replacing the
Gamma by a lognormal or an inverse-Gaussian law with the \emph{same} two exact
moments moves $\upsilon_{\mathrm{mt}}(T)$ and $\upsilon_{\mathrm{ds}}(T)$ by at
most $0.009$ across the $\sigma$ sweep of Table~\ref{tab:sigma} and preserves
the orderings there; the terminal state is dominated by the plateau rather than
by the shape of the ramp, because $T$ is several multiples of
$\mathcal T^{\mathrm{ads}}$.

That family comparison is not the relevant test for the residence-law experiment. Measured against exact inversion rather than against another two-moment family, the surrogate reverses the first two disease-score entries of Table~\ref{tab:residence} (\S\ref{subsec:gammaverify}), which is why that table reports exact values. Accordingly, the conclusions drawn from the Gamma surrogate --- the null model of \S\ref{sec:null}, Table~\ref{tab:sigma} and \S\ref{sec:acceptance} --- do not rest on third or higher moments of the internalization time; \S\ref{subsec:retention} does, and uses exact inversion for that reason. Note that $\sigma=0.33$ with $\bar\gamma=1$ forces $\gamma^d=0.5$ and hence $\langle\tau\rangle=2/3$ by \eqref{eq:kinetic}; there is no separate baseline residence time.

As a final note on the numerical configurations, the desorption protocol forms an integral part of every specification. The verification, the loss-channel diagnostic, the null-model and spacing calculations, and the acceptance-criterion study use continued search ($\eta=1$, the Markovian boundary reduction); the residence-law comparison uses resetting ($\eta=0$); \S\ref{subsec:retention} sweeps $\eta\in[0,1]$. 

\section{Transport heterogeneity as a null model}
\label{sec:null}

\subsection{How much spread does geometry alone produce?}
\label{subsec:nullspread}

To quantify the baseline variability induced solely by spatial transport, we construct a purely geometric null model. The ensemble layout protocol is specified as follows, with every element required to strictly reproduce the reported statistics:
\begin{enumerate}[label=(\roman*)]
\item \emph{Chamber and targets.} $\Omega=[0,1]^2$ with reflecting walls,
$N=10$ targets, $\varepsilon\ell_j=0.02$, $D=1$, $\gamma=0.4$, $\kappa'=2$,
$\sigma=1/3$, $\bar\gamma=1$, so $g(0)=2/3$ and
$\Psi=D/(\kappa'g(0)\ell)=0.75$ --- identical for both dosing
geometries below.
\item \emph{Centres.} A configuration $\{\mathbf x_j\}_{j=1}^{10}$ is drawn
i.i.d.\ uniform on $[0.10,0.90]^2$, giving a wall clearance
$d_\partial=0.10$ --- like $d_{\mathrm{src}}$, a fixed chamber-scale length and
not a multiple of the radius; the whole configuration is rejected and
redrawn until
$\min_{j<k}|\mathbf x_j-\mathbf x_k|>0.12$. This samples the uniform law
conditioned on the hard-core constraint, which is not the same as sequential
dart-throwing and matters for the tails.
\item \emph{Source placement and sampling law.} The port $\mathbf x_0$ is drawn uniformly on
$[0.05,0.95]^2$ and resampled within the same centre configuration until
it clears every centre by the source-separation distance
$d_{\mathrm{src}}=0.10$, so the ensemble is
\begin{equation}
\mathcal L\bigl(\{\mathbf x_j\}\bigr)\ \times\
\mathcal L\bigl(\mathbf x_0\mid\{\mathbf x_j\},\
\min_j|\mathbf x_j-\mathbf x_0|\ge d_{\mathrm{src}}\bigr):
\label{eq:ensemblelaw}
\end{equation}
the marginal law of the centres is exactly the hard-core law and only the
conditional law of the port is modified. Rejecting the joint draw instead would
additionally reweight the centre marginal toward configurations that leave more
room for a port, which is not the intended ensemble. $d_{\mathrm{src}}$ is a fixed chamber-scale length in
\eqref{eq:sep}; it happens to equal $5\varepsilon\ell$ at $\varepsilon=0.02$,
but it is not defined as a multiple of the radius, which would vanish
with $\varepsilon$.
The condition is not cosmetic, and the obvious alternative is unsafe: drawing
the port independently of the centres places it inside an organoid in
$0.95\%$ of draws and within two radii in $5.4\%$. Those layouts are extreme by
construction, so they populate the upper tail of the CV distribution, which is what exactly
the region the 90th percentile reports. Because the condition rejects a
substantial fraction of port proposals, the sensitivity to $d_{\mathrm{src}}$ is
reported in Table~\ref{tab:nullsweep}.
\item \emph{Randomness.} One \texttt{numpy} PCG64 stream, seed $2024$, consumed
in the order (centres, centre rejections, port, port rejections); $2000$ layouts
drawn and all $2000$ retained, since resampling the port never fails.
\item \emph{Discards.} With the source-clearance condition in force, no layout returns a non-positive reduced flux, at any $d_{\mathrm{src}}$ tested. Without it, $9$ of $2000$ did --- a symptom of the source-separation failure rather than of the $O(\nu)$ closure, which is worth stating because the two are
easy to confuse.
\item \emph{Uncertainty.} Percentiles carry $95\%$ percentile-bootstrap
intervals, $4000$ resamples, seed $7$.
\end{enumerate}

\begin{table}[htbp]\centering\small
\caption{Design robustness of the transport-only null, $120$ layouts per cell,
all with the source-clearance condition in force. Each row changes one factor
from the baseline ($N=10$, $\varepsilon=0.02$, $\gamma=0.4$, unit square, random
port, wall clearance $d_\partial=0.10$, hard-core radius $d_{\min}=0.12$,
source clearance $d_{\mathrm{src}}=0.10$). Entries are median/90th percentile,
with $95\%$ percentile-bootstrap intervals ($2000$ resamples, seed $7$) on the
phenotype-CV columns; ``ratio'' is the median point-source CV divided by the
median uniform-source CV. No cell produced a non-positive reduced flux. For the
fixed-port rows every
port must clear every organoid, which is imposed on the centres because the
ports cannot move; for the random-port rows the port is resampled within each
centre configuration, as in \eqref{eq:ensemblelaw}.}
\label{tab:nullsweep}
\begin{tabular}{@{}l|rr|rr|rr|r@{}}
\toprule
&\multicolumn{2}{c|}{$\mathrm{CV}(\pi_j)$}
&\multicolumn{2}{c|}{$\mathrm{CV}(\upsilon_{\mathrm{mt}})$}
&\multicolumn{2}{c|}{$\max\pi/\min\pi$}&ratio\\
&med&90th&med&90th&med&90th&\\
\midrule
baseline&1.08&1.49&0.608 \tiny$[.584,.623]$&0.703 \tiny$[.687,.740]$&43&82&$5.0$\\
\midrule
$N=5$&0.65&0.92&0.396 \tiny$[.351,.422]$&0.507 \tiny$[.497,.535]$&6.0&12&$3.9$\\
$N=20$ ($d_{\min}\,0.085$)&1.74&2.14&0.859 \tiny$[.844,.876]$&0.938 \tiny$[.919,.951]$&839&1947&$5.2$\\
\midrule
$\varepsilon=0.01$&0.97&1.33&0.540 \tiny$[.522,.557]$&0.631 \tiny$[.613,.655]$&23&43&$5.1$\\
$\varepsilon=0.04$&1.26&1.70&0.703 \tiny$[.691,.714]$&0.819 \tiny$[.788,.869]$&119&259&$4.7$\\
\midrule
$\gamma=0$ (conservative)&1.07&1.48&0.605 \tiny$[.579,.620]$&0.698 \tiny$[.682,.735]$&42&79&$4.9$\\
$\gamma=0.1$&1.08&1.48&0.606 \tiny$[.580,.621]$&0.700 \tiny$[.684,.736]$&42&79&$5.0$\\
\midrule
port at corner&1.45&1.85&0.691 \tiny$[.680,.724]$&0.815 \tiny$[.799,.829]$&83&163&$5.5$\\
port at centre&0.76&0.94&0.402 \tiny$[.385,.427]$&0.509 \tiny$[.480,.532]$&10&19&$3.3$\\
two ports&0.51&0.61&0.314 \tiny$[.304,.324]$&0.369 \tiny$[.358,.374]$&5.2&7.4&$2.6$\\
four ports&0.40&0.49&0.265 \tiny$[.254,.276]$&0.319 \tiny$[.304,.326]$&3.8&4.8&$2.1$\\
\midrule
chamber $2\times0.5$&1.24&1.86&0.754 \tiny$[.722,.793]$&0.954 \tiny$[.916,.980]$&361&2613&$5.1$\\
$d_{\min}=0.06$&1.08&1.50&0.590 \tiny$[.571,.605]$&0.716 \tiny$[.681,.739]$&32&76&$3.7$\\
$d_{\min}=0.20$&1.19&1.58&0.662 \tiny$[.634,.676]$&0.770 \tiny$[.735,.784]$&49&132&$8.4$\\
\midrule
wall clearance $d_\partial=0.05$&1.12&1.42&0.607 \tiny$[.571,.632]$&0.726 \tiny$[.705,.755]$&42&89&$4.5$\\
wall clearance $d_\partial=0.20$&1.15&1.42&0.631 \tiny$[.621,.647]$&0.702 \tiny$[.689,.725]$&39&65&$4.2$\\
$d_{\mathrm{src}}=0.05$&1.26&1.77&0.645 \tiny$[.624,.665]$&0.762 \tiny$[.736,.774]$&55&133&$5.6$\\
$d_{\mathrm{src}}=0.20$&1.08&1.36&0.605 \tiny$[.591,.622]$&0.689 \tiny$[.673,.716]$&38&72&$5.1$\\
\bottomrule
\end{tabular}
\end{table}

By assigning every organoid identical biology and kinetics, solving the reduced amplitude system in \eqref{eq:amp}, and propagating the resulting plateau fluxes through the state equations in \eqref{eq:state} using a single common parameter vector, we isolate the effects of spatial layout.

Under this transport-only null model, chamber geometry alone produces a median maturation coefficient of variation of $62\%$ and a disease-score coefficient of variation of $84\%$. For an assay whose layouts can defensibly be treated as draws from this particular ensemble, the maturation 90th percentile of $74\%$ (bootstrap interval $[0.730,0.746]$) serves as an illustrative upper reference. It is not a universal decision threshold; an assay with precisely measured coordinates should instead use the layout-specific calculation detailed in \S\ref{subsec:recipe}. Crucially, failure to exceed either reference metric means only that the data are indistinguishable from the specified transport-only null. It does not confirm biological homogeneity, as altering the chamber dimensions, port distribution law, or target count will naturally shift this reference distribution.

\subsection{How much of this survives a change of design?}
\label{subsec:nullrobust}
To determine whether the pronounced spatial variability observed in the baseline model is a general feature or an artifact of specific parameter choices, we perform a systematic sensitivity analysis. While Tables~\ref{tab:null}--\ref{tab:dosing} describe one specific ensemble, varying each design choice in turn about that baseline—at 120 layouts per test condition—yields the results in Table~\ref{tab:nullsweep}. (Note that the slight shift in the baseline median, 0.608 here compared to 0.623 in Table~\ref{tab:null}, is simply sampling noise from a smaller sample size of $n=120$ versus $n=2000$, and falls comfortably within the bootstrap interval.)

Across all tested variations, the central qualitative claim—that transport alone generates a large between-organoid spread—remains robust. The median $\mathrm{CV}(\upsilon_{\mathrm{mt}})$ never falls below 0.27 and can reach as high as 0.86. This variability is highly sensitive to certain geometric factors: it scales significantly with target count (0.40 at $N=5$, rising to 0.86 at $N=20$), target size (0.54 at $\varepsilon=0.01$, 0.70 at $\varepsilon=0.04$), and chamber aspect ratio (0.75 for a $2\times0.5$ domain). Most critically, it depends on the dosing location: the median CV is 0.69 from a corner port, 0.40 from the centre, and drops to 0.27 when distributed across four ports.

Conversely, two factors have negligible effects. Bulk loss yields essentially identical median CVs (0.605, 0.606, 0.608 at $\gamma=0$, 0.1, and 0.4, respectively), because the plateau splitting is a ratio and clearance scales all targets nearly equally. Similarly, the source-clearance distance itself introduces only a 6\% spread (0.645, 0.608, 0.605 at $d_{\mathrm{src}}=0.05$, 0.10, and 0.20)—confirming that the reported figures do not hinge on exactly where this ensemble-fixing threshold is set.

The dosing benefit likewise has a range, not a value:
the point-to-uniform improvement runs from $2.1\times$ to $8.4\times$
across these designs, and it is largest exactly where the localized geometry is
worst (a corner port, widely spaced targets) and smallest where the localized
geometry is already good (four ports, closely spaced targets). 
Consequently, while a "fivefold improvement" roughly summarizes the baseline case, it poorly captures the full dynamic range. What remains truly design-independent, however, is the positive sign and the consistent order of magnitude of this spatial benefit.

Beyond these parametric sensitivities, two practical design insights emerge. First, from an experimental standpoint, multi-port dosing recovers most of the uniformity benefits of true medium exchange. Using just four ports lowers the CV to 0.27, compared to 0.12 for a perfectly uniform source, demonstrating that design recommendations to distribute the input do not demand perfect mixing. Second, regarding the mathematical consistency of the reduced model, maintaining the source-clearance condition guarantees stability. With this condition enforced, no configuration produces a non-positive reduced flux, even at the larger target size of $\varepsilon=0.04$, where the earlier unconditioned sweep had to discard 2.5\% of the layouts.

\subsection{Uniform dosing removes forcing heterogeneity exactly in the reduced model}
\label{subsec:dosing_design}

Integrating \eqref{eq:bulk} against the reflecting boundary condition gives
$\int_\Omega G_m(\mathbf x_j,\mathbf z;s)\dd\mathbf z=(s+\gamma_m)^{-1}$
independently of $j$. A spatially uniform source therefore produces an
identical transformed forcing $\Gamma_{jm}$ at every target, so within
the reduced description \eqref{eq:amp} all residual spread comes from the
Green's matrix \eqref{eq:gmat} alone. The statement is exact for the reduced
system.

To meaningfully compare this theoretically uniform forcing against a localized source, we must carefully standardize the inputs. Specifically, we distinguish between equal injected dose and equal array-mean exposure, because a coefficient of variation (CV) comparison after a nonlinear state map is not invariant to the latter.
The localized port is a unit-rate point
injection $I=\delta(\mathbf x-\mathbf x_0)$; the uniform source is
$I\equiv I_0=1$ on $|\Omega|=1$, i.e.\ the same total injection rate.
Because $\int_\Omega G\dd\mathbf z=1/\gamma$ independently of position, the two modalities deliver nearly the same total productive exposure to the array in this ensemble. Over the 2000 layouts, the realized ratio $\sum_jJ^{\mathrm{unif}}_j/\sum_jJ^{\mathrm{pt}}_j$ has a mean of 1.0029 and a range of [0.980, 1.041]. In addition, before propagation through the state equations in \eqref{eq:state}, both flux vectors are analytically rescaled to a unit mean across targets. As a result, the phenotype CVs are compared at exactly matched array-mean exposures, differing only in the shape of the flux distribution. Both cases retain the same effective parameter $\Psi$.

Under this rigorously matched comparison, the physical distinction becomes clear: localized dosing makes geometric position the dominant determinant of exposure. In contrast, distributed dosing completely removes the forcing asymmetry and leaves only inter-target competition, which is strictly bounded and independent of any hypothetical port location. Figure~\ref{fig:null} illustrates one representative layout and the full ensemble comparison.

\begin{figure}[htbp]
\centering
\includegraphics[width=\linewidth]{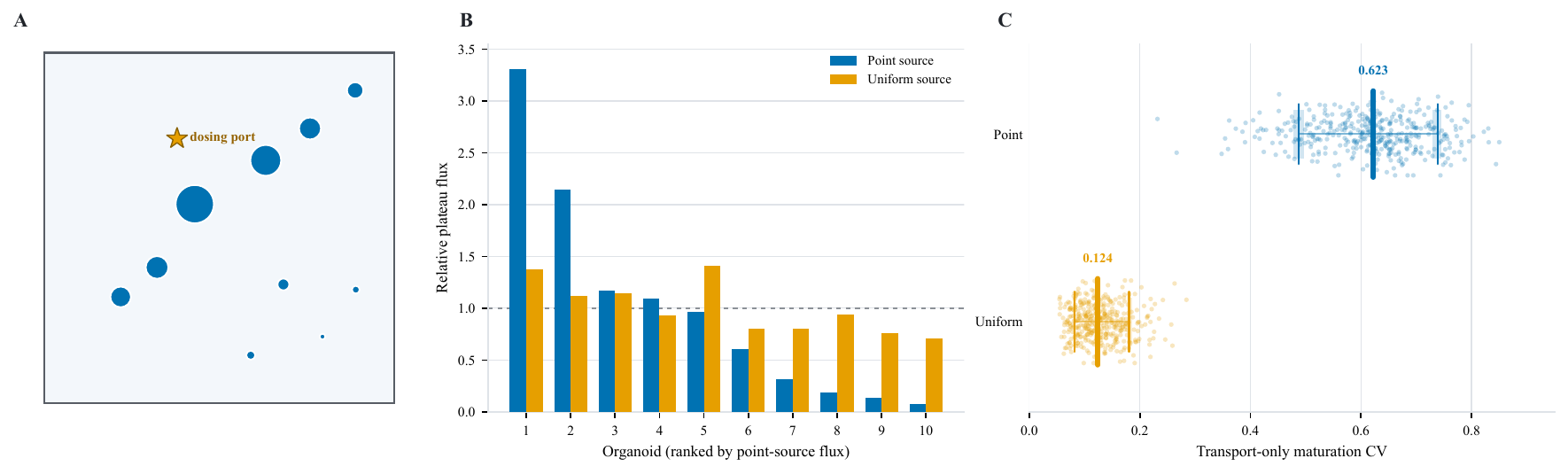}
\caption{Transport-only null model. (A) Representative realization of the
corrected random-layout protocol with ten identical organoids and one localized
dosing port; marker area is proportional to the productive plateau splitting
mass $\pi_j$. The displayed layout has
$\max_j\pi_j/\min_j\pi_j=43.95$, compared with the ensemble median $43.97$.
(B) Relative plateau flux under localized and spatially uniform dosing for the
same layout. Each flux vector is divided by its own array mean, so this is an
equal-mean-exposure analysis normalization rather than a claim of equal physical
injection rates. (C) Transport-only maturation coefficient of variation over
2000 random arrays with identical biology. Thick bars denote medians, thin bars
the 10th and 90th percentiles, and translucent bands the corresponding 95%
percentile-bootstrap intervals (4000 resamples; seed 7). Points show a fixed
random subsample of 400 layouts for legibility.}
\label{fig:null}
\end{figure}

\subsection{The recipe}
\label{subsec:recipe}

To apply this theoretical framework in practice, experimentalists can construct a layout-specific null model directly from empirical array coordinates. Given the measured target centers $\{\mathbf x_j\}$ and a known dosing geometry, the protocol proceeds as follows. First, evaluate the amplitude system in \eqref{eq:amp} to obtain the transport-only plateau flux vector $\bm\pi^{\mathrm{tr}}$. Next, compute the transport-only summary metrics: \begin{equation} \mathrm{CV}_{\mathrm{tr}}=\frac{\bigl[N^{-1}\sum_j(\pi^{\mathrm{tr}}_j -\overline\pi^{\mathrm{tr}})^2\bigr]^{1/2}}{\overline\pi^{\mathrm{tr}}}, \qquad R_{\mathrm{tr}}=\frac{\max_j\pi^{\mathrm{tr}}_j}{\min_j\pi^{\mathrm{tr}}_j}. \label{eq:nullmetrics} \end{equation} Then, propagate these fluxes through the state equations in \eqref{eq:state} using a single common parameter vector to generate the expected transport-only phenotype distribution. Finally, compare the empirically measured phenotype CV against this transport-only benchmark.

Under this formulation, the null hypothesis posits that all observed between-organoid variation arises solely from the measured geometry and transport dynamics represented by the model. Rejection of this null hypothesis is the primary informative outcome: a measured CV exceeding the model prediction strictly identifies biological or kinetic variation that spatial transport alone cannot account for. Conversely, a failure to reject the null hypothesis does not constitute proof of biological homogeneity. Rather, it merely indicates that the observation is statistically indistinguishable from transport-induced noise, which could equally reflect genuine biological variability obscured by compensating model approximations.

When performing this formal comparison, experimental uncertainties—including coordinate measurements, port placement, and assay readout noise—must be rigorously propagated through the model prior to evaluation. Under no circumstances should observed and predicted CVs be naively subtracted, as variances mapped through non-linear state dynamics do not combine linearly.

\section{What the null model needs to be right}
\label{sec:support}
\subsection{Verification of the spatial reduction}
\label{subsec:verify}

To validate the local reactive asymptotics, we first consider an exactly solvable configuration.
Let the domain be the annulus $r_\varepsilon<r<R$ with $r_\varepsilon=\varepsilon\ell$,
a unit release on the circle $r=r_0$, and $\xi(s)=\sqrt{(s+\gamma)/D}$. 
Under these conditions, the transformed field is defined piecewise as
$A_-I_0(\xi r)+B_-K_0(\xi r)$ below $r_0$ and
$A_+I_0(\xi r)+B_+K_0(\xi r)$ above it. Put
$\kappa_\varepsilon=\kappa'/\varepsilon$, $g=(s+\bar\gamma)/\chi$ and
$\chi=s+\gamma^d+\bar\gamma$. The four coefficients are the solution of
\begin{equation*}
\resizebox{0.98\linewidth}{!}{$\displaystyle
\begin{pmatrix}
D\xi I_1(\xi r_\varepsilon)-\kappa_\varepsilon gI_0(\xi r_\varepsilon)&
-D\xi K_1(\xi r_\varepsilon)-\kappa_\varepsilon gK_0(\xi r_\varepsilon)&0&0\\
0&0&I_1(\xi R)&-K_1(\xi R)\\
I_0(\xi r_0)&K_0(\xi r_0)&-I_0(\xi r_0)&-K_0(\xi r_0)\\
D\xi I_1(\xi r_0)&-D\xi K_1(\xi r_0)&-D\xi I_1(\xi r_0)&D\xi K_1(\xi r_0)
\end{pmatrix}
\begin{pmatrix}A_-\\B_-\\A_+\\B_+\end{pmatrix}
=\begin{pmatrix}0\\0\\0\\(2\pi r_0)^{-1}\end{pmatrix}.
$}
\end{equation*}
Consequently,
$\widetilde{\mathcal J}^{\mathrm{full}}(s)=2\pi r_\varepsilon
\kappa_\varepsilon\bar\gamma\chi(s)^{-1}
[A_-I_0(\xi r_\varepsilon)+B_-K_0(\xi r_\varepsilon)]$.
The reduced counterpart uses
$\Gamma(0,s)=[K_0(\xi r_0)+\beta_R(s)I_0(\xi r_0)]/(2\pi D)$ and
$R(0,0;s)=[-\log(\xi/2)-\gamma_{\mathrm{EM}}+\beta_R(s)]/(2\pi D)$ with
$\beta_R=K_1(\xi R)/I_1(\xi R)$.

Our analysis shows that the error falls monotonically, remaining below $0.27\%$ at $\varepsilon=0.1$.
Halving from $\varepsilon=0.2$ gives successive error ratios $3.95$, $3.95$, $3.96$, $3.96$, $3.97$, i.e.\ a measured convergence order of $1.985$, consistent with \eqref{eq:eps2}. 
While concentricity makes this benchmark analytically solvable, it is not responsible for the improved flux order; rather, any circular target intrinsically possesses a zero-net-flux inner dipole.

Having established this single-target baseline to validate the local reactive asymptotics, we next verify the approach against the full multi-target problem by directly solving the boundary-value problem in the perforated square. To achieve this, we solve the full boundary-value problem in the perforated square directly. Specifically, in $\Omega_\varepsilon=[0,1]^2\setminus\bigcup_j B(\mathbf x_j,\varepsilon\ell_j)$ we solve, at fixed $s$, $D\Delta\widetilde u-(s+\gamma)\widetilde u=-\delta(\mathbf x-\mathbf x_0)$ with $D\nabla\widetilde u\cdot\mathbf n=0$ on the outer square and the dynamic condition \eqref{eq:grobin} on every disk. Writing $\widetilde u=w+v$ with $w$ the free-space fundamental solution removes the point source exactly—since the two $\delta$'s cancel, $v$ solves the homogeneous equation. This leaves a smooth Robin/Neumann problem for $v$ that is discretized with $P_2$ elements on a graded unstructured mesh. Three mesh levels $h_{\min}=\varepsilon\ell/12, /24$, and $/48$ are run at every configuration. The observed order in $h$ is 2 (set by the polygonal representation of the circles), and the reported full-PDE flux is the Richardson extrapolant. The worst extrapolation increment over all 54 configurations is $1.3\times10^{-5}$ relative, ensuring that every model error exceeding $10^{-4}$ is resolved by more than an order of magnitude. 

With the above highly resolved numerical framework in place, we establish an experimental design where targets are drawn from one fixed ten-point layout, utilizing $N=2$ and $N=5$ nested subsets alongside a fixed off-centre port at (0.62, 0.30), with $\kappa'=1.5$, $\gamma^d=0.5$, $\bar\gamma=1$, and $\ell=1$. Both the conservative ($\gamma=0$) and lossy ($\gamma=0.4$) regimes are run across transform variables $s=0, 1, 5$. Because the minimum centre separation of the ten-point layout is 0.22, setting $\varepsilon=0.10$ would put the $N=10$ disks two radii apart, violating Assumption~\ref{ass:dilute} by construction. Therefore, the $N=10$ sweep runs at $\varepsilon=0.05, 0.025$, and 0.0125, with each row recording $d_{\min}/(\varepsilon\ell)$. Using the relative error definitions$$E_\infty=\frac{\max_j\vert{}\mathcal J^{\mathrm{full}}_j-\mathcal J^{\mathrm{red}}_j\vert{}}{\max_j\vert{}\mathcal J^{\mathrm{full}}_j\vert{}},\qquad E_1=\frac{\sum_j\vert{}\mathcal J^{\mathrm{full}}_j-\mathcal J^{\mathrm{red}}_j\vert{}}{\sum_j\vert{}\mathcal J^{\mathrm{full}}_j\vert{}},\label{eq:pdeerr}$$
Table~\ref{tab:multiverify} reports the extremes over $s$ at each $(N,\varepsilon)$, establishing several key insights regarding the system's convergence and accuracy:
\begin{enumerate}[label=(\roman*)]
\item \emph{The multi-target closure converges at the predicted rate.} The
fitted slope is $2.00$--$2.15$ at every $N$ and both regimes, confirming
\eqref{eq:eps2} for the interacting problem and not only for one isolated
target. At $s=0$ convergence is faster still ($p\approx3$--$4$), so the
splitting probabilities are more accurate than the transforms that generate them.
\item \emph{The reported quantities are far more accurate than the individual fluxes.} At $\varepsilon=0.02$ in this widely separated fixed layout ($d_{\min}/\varepsilon\ell=20.2$), the flux error is approximately $5\times10^{-4}$, the target-allocation error is roughly $10^{-4}$, and the relative error in the transport-only phenotype CV is $5.6\times10^{-6}$.
\item \emph{Separation matters more than target count, but not uniformly across
functionals.} Order the rows by $d_{\min}/\varepsilon\ell$. 
At $d_{\min}/\varepsilon\ell\approx4$ the flux error is $0.5$--$2\%$ whether $N$ is
$5$ or $10$ --- though they do not collapse exactly, and the $N=2$ block does not
join them. 
\end{enumerate}

\begin{table}[htbp]\centering\small
\caption{Full PDE versus the reduced system in the perforated square. Every
entry is dimensionless; the transform variable and the normalization differ by
column and are stated here rather than left to inference.
$E_\infty$ and $E_1$ are the relative errors \eqref{eq:pdeerr},
maximized over the three transform points $s\in\{0,1,5\}$.
``alloc.'' is the absolute error
$\max_j|\widehat\pi^{\mathrm{full}}_j-\widehat\pi^{\mathrm{red}}_j|$ in the
target-allocation vector $\widehat{\bm\pi}=\bm{\mathcal J}/\sum_k\mathcal J_k$,
also maximized over the three $s$; at $s>0$ this is an allocation of transformed
flux rather than of probability, and only the $s=0$ entry is a splitting
probability.
``$\mathrm{CV}(\pi)$ err'' is the relative error in
$\mathrm{CV}_j(\mathcal J_j)$, maximized over the three $s$.
``$\mathrm{CV}(\upsilon_{\mathrm{mt}})$ err'' is the relative error in the
transport-only phenotype CV and uses $s=0$ only, since the plateau flux
is what \eqref{eq:state} is driven by; it is the worse of the conservative and
lossy regimes.
``$p$'' is the fitted slope of $E_\infty$ against $\varepsilon$, over entries
exceeding ten times the Richardson mesh-error estimate.}
\label{tab:multiverify}
\begin{tabular}{@{}rrr|rr|rrr|r@{}}
\toprule
$N$&$\varepsilon$&$d_{\min}/\varepsilon\ell$&$E_\infty$&$E_1$&
alloc.&$\mathrm{CV}(\pi)$ err&$\mathrm{CV}(\upsilon_{\mathrm{mt}})$ err&$p$\\
\midrule
$2$&$0.10$&$9.5$&$1.4\e{-2}$&$1.3\e{-2}$&$2.9\e{-4}$&$5.5\e{-3}$&$6.5\e{-4}$&$2.03$\\
$2$&$0.05$&$19.0$&$3.4\e{-3}$&$3.3\e{-3}$&$6.2\e{-5}$&$1.3\e{-3}$&$6.5\e{-5}$&\\
$2$&$0.02$&$47.4$&$5.2\e{-4}$&$5.1\e{-4}$&$8.2\e{-6}$&$1.8\e{-4}$&$5.0\e{-6}$&\\
\midrule
$5$&$0.10$&$4.0$&$1.5\e{-2}$&$1.4\e{-2}$&$5.4\e{-3}$&$3.6\e{-3}$&$7.2\e{-4}$&$2.15$\\
$5$&$0.05$&$8.1$&$3.2\e{-3}$&$3.2\e{-3}$&$1.1\e{-3}$&$1.2\e{-3}$&$2.8\e{-5}$&\\
$5$&$0.02$&$20.2$&$4.6\e{-4}$&$4.6\e{-4}$&$1.5\e{-4}$&$2.3\e{-4}$&$5.6\e{-6}$&\\
\midrule
$10$&$0.05$&$4.4$&$5.6\e{-3}$&$9.3\e{-3}$&$2.0\e{-3}$&$9.7\e{-3}$&$3.4\e{-4}$&$2.00$\\
$10$&$0.025$&$8.8$&$1.4\e{-3}$&$2.3\e{-3}$&$4.6\e{-4}$&$2.7\e{-3}$&$2.6\e{-5}$&\\
$10$&$0.0125$&$17.6$&$3.5\e{-4}$&$5.6\e{-4}$&$1.0\e{-4}$&$7.3\e{-4}$&$3.9\e{-6}$&\\
\bottomrule
\end{tabular}
\end{table}

We note that Assumption~\ref{ass:dilute} is best read as a condition on separation
in units of the radius rather than on target count. Consequently, the benefit gained from a given separation depends heavily on the specific functional desired, as the three do not scale identically:

\begin{itemize}[itemsep=2pt,topsep=3pt]
\item \emph{Phenotype CV} drops below $10^{-3}$ relative already at about four radii ($7.2\times10^{-4}$ at $N=5$, $3.4\times10^{-4}$ at $N=10$).
\item \emph{Target allocation} converges more slowly: at four radii it is $5.4\times10^{-3}$ ($N=5$) and $2.0\times10^{-3}$ ($N=10$), only approaching $10^{-3}$ near eight to ten radii ($1.1\times10^{-3}$ at $N=5$, $\varepsilon=0.05$; $4.6\times10^{-4}$ at $N=10$, $\varepsilon=0.025$).
\item \emph{Individual fluxes} are the most configuration-dependent. Sub-$10^{-3}$ flux error occurs in the most refined $N=5$ and $N=10$ tests, but not in the $N=2$ test at nineteen radii, which still exhibits an error of $3.4\times10^{-3}$.
\end{itemize}

Ultimately, the null-model ensemble of \S\ref{sec:null} maintains $d_{\min}/\varepsilon\ell>6$ and $d_{\mathrm{src}}/\varepsilon\ell\ge5$ throughout, and the design sweep of \S\ref{subsec:nullrobust} reaches about 4 in its coarsest cell. Both lie in the range where the reported CVs are accurate to well under 1\%, yet neither lies in the range where individual fluxes or allocations are strictly accurate to $10^{-3}$. This provides functional-specific empirical guidance derived directly from the tested configurations.

\subsection{Validating the time-domain reconstruction}
\label{subsec:gammaverify}

Almost every phenotype number rests on the two-moment Gamma surrogate of
\eqref{eq:gammafit}, and two moments do not determine a law. We therefore
compare it against numerical inversion of the exact reduced transform.
To perform this inversion, the square's Green's function is continued to complex $s$ through its
eigenfunction series relative to a real base point. In the lossy case $a_0=\gamma>0$ and
\begin{equation}
G(\mathbf x,\mathbf z;s)=G_{a_0}(\mathbf x,\mathbf z)
-\sum_{\mathbf n\ge\mathbf 0}\varphi_{\mathbf n}(\mathbf x)
\varphi_{\mathbf n}(\mathbf z)
\Bigl[\tfrac1{\lambda_{\mathbf n}+a_0}-\tfrac1{\lambda_{\mathbf n}+a_0+s}\Bigr].
\label{eq:contlossy}
\end{equation}
In the conservative case $a_0=0$ and the $\mathbf n=\mathbf0$ term of
\eqref{eq:contlossy} is undefined, so the Neumann zero mode is separated first:
\[
G(\mathbf x,\mathbf z;s)=\frac1{|\Omega|s}+G^0(\mathbf x,\mathbf z)
-\sum_{\mathbf n\neq\mathbf 0}\varphi_{\mathbf n}(\mathbf x)
\varphi_{\mathbf n}(\mathbf z)
\Bigl[\tfrac1{\lambda_{\mathbf n}}-\tfrac1{\lambda_{\mathbf n}+s}\Bigr],
\]
with $G^0$ from \eqref{eq:ewald}; the $1/(|\Omega|s)$ term is carried through
the amplitude system and cancels there exactly as in Remark~\ref{rem:pseudo}.
Both corrections converge like $\lambda_{\mathbf n}^{-2}$, and no complex Bessel
evaluation is needed. Exploiting this rapid $\lambda_{\mathbf n}^{-2}$ convergence, the inversion process efficiently employs the Abate--Whitt Euler algorithm, where every node lies on a vertical line $\Re s=A/2t>0$. This avoids the pole-management that a deformed contour would
require given the poles at $s=-\lambda_{\mathbf n}$. On the analytic test
transforms $1/[s(s+1)]$ and $(1+\vartheta s)^{-k}/s$, the inversion is accurate
to $5\times10^{-9}$ over $t\in[0.3,6]$. To quantify the discrepancy, we report
$E_{\mathrm{CDF}}=\sup_t|H^{\mathrm{ref}}_j-H^\Gamma_j|$ and
$E_{\mathcal J}=\int_0^T|\mathcal J^{\mathrm{ref}}_j-\mathcal J^\Gamma_j|
/\int_0^T\mathcal J^{\mathrm{ref}}_j$.

Applying these error metrics to all calculations utilizing the Markovian boundary --- Tables~\ref{tab:null}, \ref{tab:dosing} and \ref{tab:sigma}, the null model, the dosing-geometry result and \S\ref{sec:acceptance} --- reveals that the Gamma reconstruction introduces negligible discrepancy: it changes the principal between-organoid coefficient of variation by less than $0.06\%$, and the terminal maturation by less than $0.4\%$. The reconstruction does displace the internalization-time quantiles: the exact $10$th percentile is
$0.39$ against $0.36$ for the surrogate at the baseline, and up to
$0.54$ against $0.18$ for the long-tailed reset case.

While the Gamma surrogate performs exceptionally well under Markovian conditions, its accuracy degrades by an order of magnitude when applied to the reset protocol with non-Markovian residence. One critical consequence must be recorded plainly, i.e., the reconstruction reverses one of the orderings it is used to
establish. Exact inversion gives
$\upsilon_{\mathrm{ds}}(T)=0.2295$, $0.2316$, $0.1848$ for the Gamma,
exponential and long-tailed laws, whereas the surrogate gives $0.2468$,
$0.2396$, $0.2051$: the first two rows swap. Their exact separation is $0.002$,
so this is a near-tie either way. The maturation ordering is
unchanged --- exact $0.4879$, $0.4954$, $0.5358$ against surrogate $0.4818$,
$0.4929$, $0.5354$, increasing in both, though the values themselves shift by up
to $1.3\%$ --- and it is on that ordering that
\S\ref{subsec:retention} relies. Table~\ref{tab:residence} and
Figure~\ref{fig:residence} now report the exact-inversion values throughout.

\begin{figure}[htbp]\centering
\includegraphics[width=\linewidth]{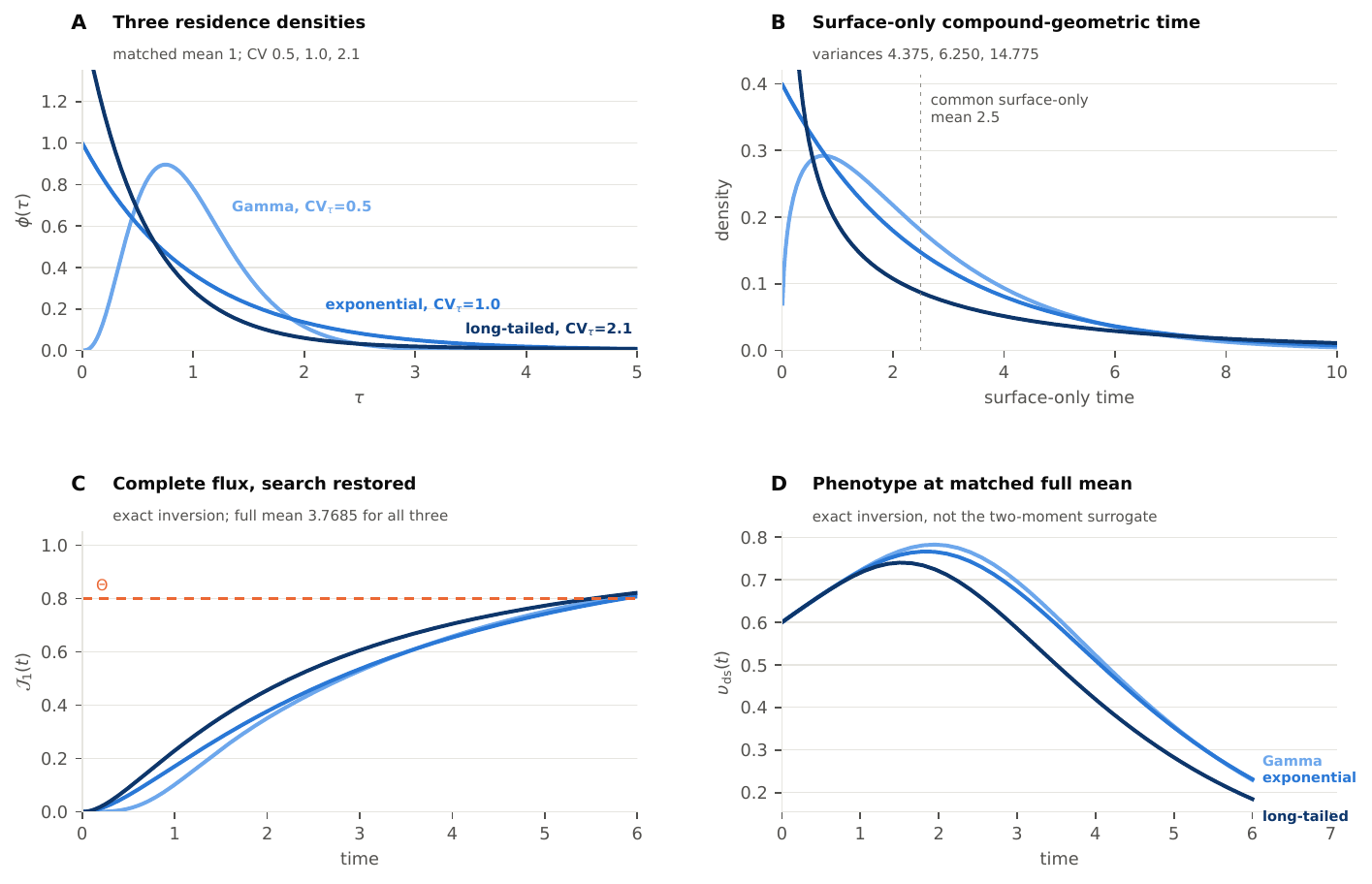}
\caption{(A) The three residence densities. (B) Surface-only compound-geometric
time, common mean $2.5$. (C) Complete flux with the diffusive search restored.
(D) Phenotype trajectories at matched full mean.}
\label{fig:residence}\end{figure}

\subsection{Phenotypic response: retention, competition, and residence laws}
\label{subsec:retention}

\medskip
\noindent\textbf{Retention and timing.} Repeating the boundary comparison while decreasing bulk loss, with everything
else fixed ($\mathbf x_1=(0.5,0.5)$, $\mathbf x_0=(0.25,0.25)$, $\kappa'=1.5$,
$\varepsilon=0.02$), we find that the spread collapses as $\gamma\to0$, and at $\gamma=0$ it is exactly
zero by \eqref{eq:onetarget}. To be explicit, with reflecting walls and no loss a desorbed
molecule cannot escape, and the probability of failing $n$ successive
internalization attempts is $\sigma_1^n\to0$. The apparent dose-response effect
of retention is therefore a $\sigma\times\gamma$ interaction --- desorption
creates additional search cycles during which clearance can act --- and not an
effect of partial accessibility alone. Define the static-Robin overestimation
factor as
$\max_{\sigma\in[0,0.95]}\pi_1^{\mathrm{stat}}(\sigma)/
\pi_1^{\mathrm{rev}}(\sigma)$, where the static model sets $g(s)\equiv1$ while
retaining the same $\kappa'$ and bulk parameters, so that
$\pi_1^{\mathrm{stat}}=\pi_1^{\mathrm{rev}}(0)$ is the $\sigma=0$ column. It is
$1.668$ at $\gamma=0.4$, $1.192$ at $\gamma=0.1$, $1.040$ at $\gamma=0.02$,
$1.008$ at $\gamma=0.004$, and exactly one at $\gamma=0$. Restricted to
$\sigma\le0.90$ the same factors are $1.316$, $1.091$, $1.019$ and $1.004$: the
overestimation is concentrated in the last few per cent of retention, where the
number of failed cycles diverges.
Static sinks may still overstate finite-horizon uptake, but
that is a statement about $U_1(T)=\int_0^T\mathcal J_1$, not about $\pi_1$.

Enforcing Assumption~\ref{ass:cons} and varying
$\gamma^d=\sigma\bar\gamma/(1-\sigma)$ at fixed $\bar\gamma=1$ (so $\langle\tau\rangle=1-\sigma$ by \eqref{eq:kinetic}), the reset specialization of \eqref{eq:renewalpi} is $\mathcal T^{\mathrm{tot}}_{1}=\mathcal T^{\mathrm{ads}}/(1-\sigma)+1$, strictly increasing. Table~\ref{tab:sigma} gives the continued-search values that the Markovian reduction actually produces, together with the finite-horizon phenotype. Since $\int_0^\infty\mathcal J_1=1$ and $\mathcal J_1>0$ beyond $T$,
the delivered fraction $U_1(T)$ is strictly less than one and decreases as
delivery is delayed; the finite-horizon phenotype can therefore deteriorate with
$\sigma$ even though eventual internalization is certain.
Consistent with this certainty, the computed eventual uptake remains $\pi_1=1$ to twelve digits throughout our tests. However, the sign of the residual terminal effect is not universal: with dosing maintained to the readout it is monotone at every
$a_{\mathrm{dose}}$ we tested, but under dose-then-washout with a plateau well
above $\Theta$ it reverses (at $a_{\mathrm{dose}}=2$, $t_{\mathrm d}=9$,
$T=20$ we obtain
$\upsilon_{\mathrm{ds}}(T)=0.108$ at $\sigma=0.33$ against $0.070$ at
$\sigma=0.90$), because a saturating response rewards a longer supra-threshold
window and high retention supplies one.
The loss-channel collapse, conservative timing and finite-horizon response are
collected in Figure~\ref{fig:sigma}.

\begin{figure}[htbp]\centering
\includegraphics[width=\linewidth]{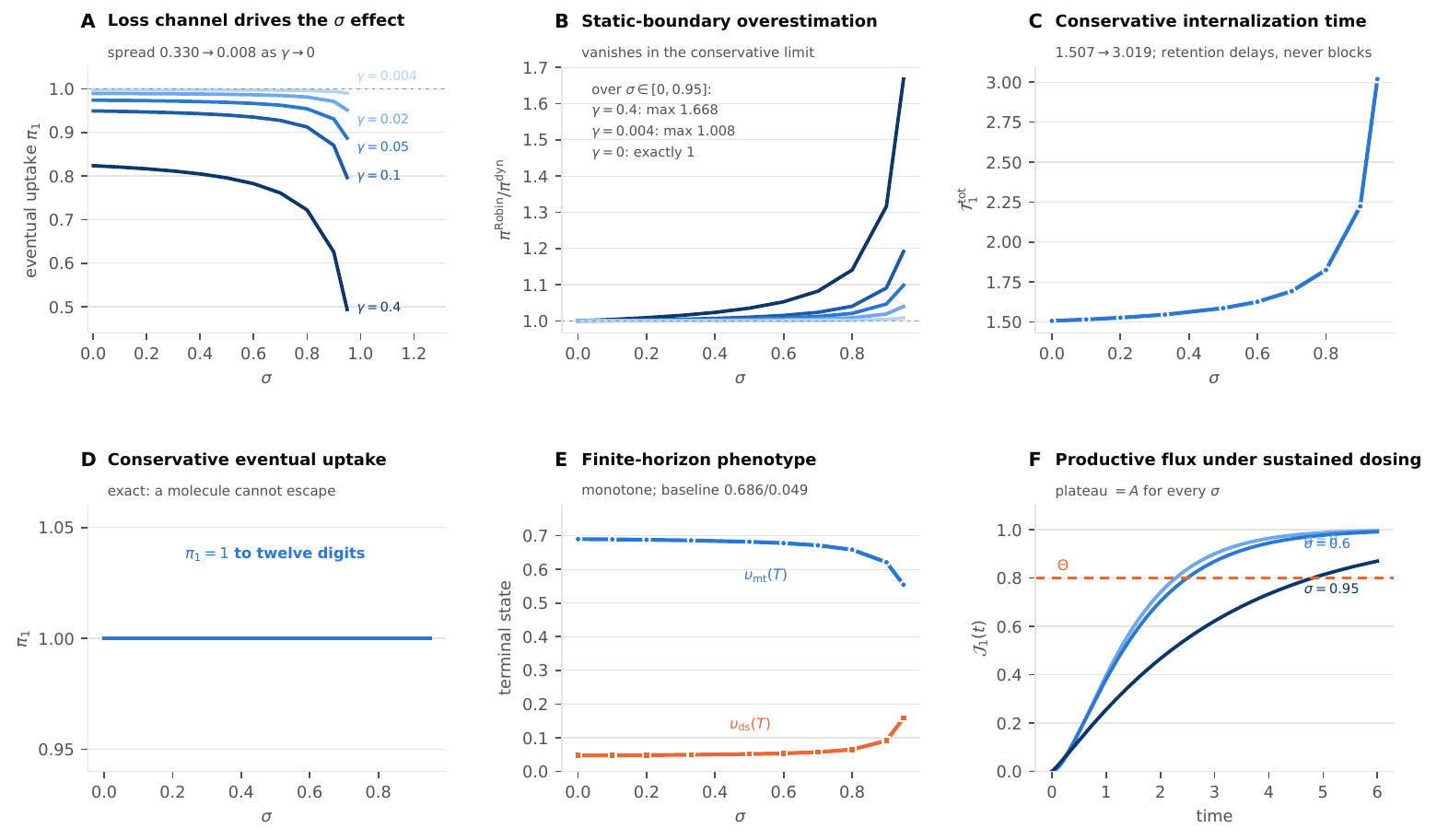}
\caption{(A) Eventual uptake as bulk loss is removed. (B) Static-boundary
overestimation, vanishing in the conservative limit. (C) Conservative
internalization time. (D) Conservative eventual uptake, exactly one.
(E) Finite-horizon phenotype. (F) Productive flux under sustained dosing.}
\label{fig:sigma}\end{figure}

\medskip
\noindent\textbf{Residence-law dispersion.} While retention primarily governs the delay and timing of uptake, the specific mathematical shape of the residence time distribution also fundamentally reshapes the transient response. To illustrate this, we compare three residence densities with a common mean of 1 and common $\sigma=0.6$: a Gamma with $\mathrm{CV}_\tau=0.5$, an exponential, and the
hyperexponential $0.1r_1e^{-r_1\tau}+0.9r_2e^{-r_2\tau}$ with
$r_1=(1+\sqrt{15.345})^{-1}=0.203365$ and
$r_2=(1-\sqrt{15.345}/9)^{-1}=1.770699$, which has mean exactly $1$ and
$\mathrm{CV}_\tau=2.1$. The surface-only compound-geometric time has common mean
$\langle\tau\rangle/(1-\sigma)=2.5$ and variances $4.375$, $6.250$, $14.775$,
with $\Pr(T_{\mathrm{surf}}<1)=0.239$, $0.330$, $0.450$: the long-tailed law
front-loads delivery. These are not the release-to-internalization
moments. Restoring the diffusive search through \eqref{eq:renewalpi} gives a
common full mean $\mathcal T^{\mathrm{tot}}=3.7685$ and standard deviations
$3.1264$, $3.4132$, $4.4916$, and hence equal mean exposure and equal per-encounter internalization
probability therefore give measurably different transients, so a single-rate
surface model, which fixes the whole law to the exponential one, cannot
reproduce the response. Because the three laws differ in all higher moments and
not only in $\mathrm{CV}_\tau$, this is a statement about the shape of the
residence law. Figure~\ref{fig:residence} displays the residence laws and the corresponding flux and phenotype trajectories.

\medskip
\noindent\textbf{Target competition.} Beyond the temporal dynamics governed by individual target retention, multi-target configurations introduce the spatial effect of local resource depletion. For two lossy targets at $(0.5\mp d/2,0.5)$, the raw total uptake $U_{12}(d)$ decreases monotonically with $d$, because increasing the spacing also moves both targets away from the source. Only the normalized ratio
\begin{equation}
\varrho_{\mathrm{comp}}(d)=\frac{U_{12}(d)}
{U^{\mathrm{iso}}_1(d)+U^{\mathrm{iso}}_2(d)}
\label{eq:comp}
\end{equation}
isolates competition. For a pair of identical targets with equal
self-interaction the amplitude matrix is
$\bigl(\begin{smallmatrix}b&c\\c&b\end{smallmatrix}\bigr)$ with
$b=1+2\pi\nu D\mathcal G_{11}+\nu\Psi$ and
$c=2\pi\nu DG(\mathbf x_1,\mathbf x_2)$, so
$A_1+A_2=\nu(\Gamma_1+\Gamma_2)/(b+c)$ while the isolated targets give
$\nu(\Gamma_1+\Gamma_2)/b$, whence
\begin{equation}
\varrho_{\mathrm{comp}}(d)=\frac{b}{b+c}.
\label{eq:compclosed}
\end{equation}
The forcing cancels identically, so $\varrho_{\mathrm{comp}}$ is
independent of the source position and of the dose, and depends only on the
target--target Green's function --- which is precisely what makes it the correct measure of competition. Sources at $(0.5,0.5)$,
$(0.5,0.25)$, $(0.5,0.85)$, $(0.25,0.5)$ and $(0.2,0.3)$ give identical values
to four decimals.

Because this experiment is lossy, $\sum_k\pi_k<1$ and the maps
$\pi_j(\mathbf x_0)$ are unnormalized uptake masses, not capture basins;
when a target-allocation map is wanted we report
$\widehat\pi_j=\pi_j/\sum_k\pi_k$ separately from the success mass.
Figure~\ref{fig:geometry} plots the raw and competition-normalized quantities.

\begin{figure}[htbp]\centering
\includegraphics[width=\linewidth]{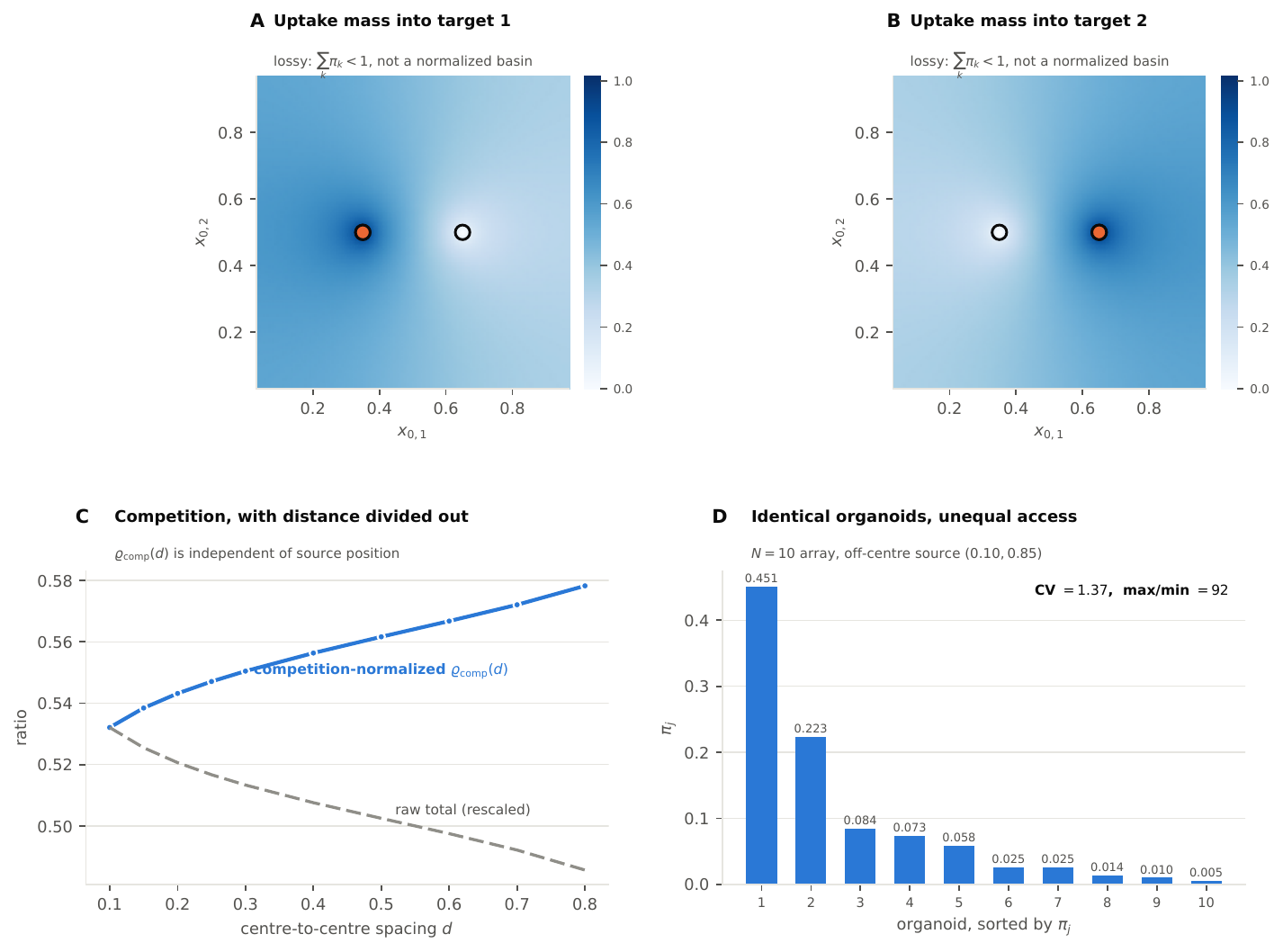}
\caption{(A,B) Lossy uptake masses for two competing targets. (C) The
competition-normalized ratio \eqref{eq:comp}, with the raw total shown
separately; the $d=0.05$ point is excluded from any asymptotic reading.
(D) A fully specified $N=10$ array under an off-centre source.}
\label{fig:geometry}\end{figure}

\medskip
\noindent\textbf{Remixing and rebinding.} Closely related to spatial competition between distinct targets is the probability of a desorbed molecule remixing and rebinding to the same target before escaping into the bulk. To quantify this, Corollary~\ref{cor:remix} is exercised on two disks at $(0.35,0.5)$ and
$(0.65,0.5)$ with source $(0.20,0.50)$, $\kappa'=2.0$, giving
$\overline{\bm\pi}=(0.7086,0.2914)$ and, from the matched-asymptotic evaluation
of \cite{bressloff2025asymptotic},
$\mathbf P(0)=\bigl(\begin{smallmatrix}0.9256&0.0744\\0.0744&0.9256\end{smallmatrix}\bigr)$
at $\varepsilon=0.02$. With $\bm\sigma=(0.1,0.9)$ the target-1 uptake at
$\eta=0,\tfrac14,\tfrac12,\tfrac34,1$ is $0.9563$, $0.9468$, $0.9314$, $0.9017$,
$0.8214$, an endpoint gap of $0.135$; replacing $\mathbf P$ by its leading-order
value $\mathbf I$ would give $0.248$. The correction is $O(\nu)$ and therefore
decays only logarithmically --- the fraction of the idealized gap recovered is
$0.545$ at $\varepsilon=0.02$, $0.692$ at $10^{-3}$ and $0.889$ at $10^{-10}$
--- so same-organoid rebinding is not a small correction at any experimentally
relevant size. With homogeneous retention $\bm\sigma=(0.5,0.5)$ the uptake
drifts only from $0.7086$ to $0.6815$ across $\eta$, the $O(\nu)$ residual of
Corollary~\ref{cor:remix}. Identifying $\eta$ therefore requires either
heterogeneous retention or time-resolved data.

\subsection{Global uncertainty and sensitivity}
\label{subsec:sens}

To evaluate the robustness of the identified biophysical mechanisms across the full parameter space rather than isolated deterministic sweeps, we supplement the analysis with Latin hypercube sampling, partial rank correlation, and variance-based Sobol indices \cite{mckay2000comparison,sobol2001global,marino2008methodology}, using the
first-order estimator of \cite{saltelli2010variance} and the total-effect
estimator of \cite{jansen1999analysis} (Appendix~\ref{app:mc}). As a primary baseline, we first perform a loss-channel audit on the lossy one-organoid benchmark at $s=0$, $\varepsilon=0.02$, with independent uniform ranges $D\in[0.5,1.5]$, $\kappa'\in[0.5,2.5]$, $\gamma\in[0.1,0.8]$,
$\gamma^d\in[0.1,2.0]$, $\bar\gamma\in[0.5,2.0]$, $\ell\in[0.6,1.4]$,
$r_0\in[0.3,0.8]$, and an LHS of $4096$ points (seed $12345$), gives uptake
quantiles $(0.270,0.491,0.802)$ and Table~\ref{tab:sens}; Sobol indices use two seven-dimensional base matrices from one scrambled fourteen-dimensional design of size $2^{15}$ (seed $54321$).

\begin{table}[htbp]\centering\small
\caption{Loss-channel audit (left) and eight-input closure audit (right).}
\label{tab:sens}
\begin{tabular}{@{}lrrr@{\hskip 2em}lrrr@{}}
\toprule
input&PRCC&$S_i$&$S_{T_i}$&input&PRCC&$S_i$&$S_{T_i}$\\
\midrule
$D$&$0.793$&$0.112$&$0.117$        &$D$&$0.799$&$0.118$&$0.123$\\
$\kappa'$&$0.666$&$0.056$&$0.061$  &$\kappa'$&$0.459$&$0.019$&$0.021$\\
$\gamma$&$-0.955$&$0.731$&$0.734$  &$\gamma$&$-0.944$&$0.636$&$0.643$\\
$\gamma^d$&$-0.449$&$0.017$&$0.021$&$\gamma^d$&$-0.803$&$0.114$&$0.123$\\
$\bar\gamma$&$0.348$&$0.009$&$0.013$&$\bar\gamma$&$0.659$&$0.052$&$0.060$\\
$\ell$&$0.630$&$0.042$&$0.044$     &$\ell$&$0.513$&$0.025$&$0.026$\\
$r_0$&$-0.487$&$0.020$&$0.022$     &$r_0$&$-0.448$&$0.018$&$0.019$\\
&&&                                 &$\eta$&$-0.196$&$0.003$&$0.004$\\
\midrule
$\sum S_i$&&$0.988$&                &$\sum S_i$&&$0.984$&\\
\bottomrule
\end{tabular}
\end{table}

First-order indices sum to $0.988$ and every $S_{T_i}-S_i<0.006$, so
interactions are weak over these ranges. Within this benchmark bulk clearance
dominates, which is direct evidence that the large $\sigma$-dependence of the
lossy sweep is generated by the loss channel.

While the single-target audit confirms the primary influence of bulk clearance, parameters governing spatial re-binding such as $\eta$ require a multi-target environment to manifest. Because $\eta$ has an identically zero index in a single-target setup, it is exercised in a separate two-target closure audit: 
with $q_{\mathrm{ads}}$ the irreversible first-adsorption mass, $\overline{\bm p}=(0.7,0.3)$,
$\sigma_1=\gamma^d/(\gamma^d+\bar\gamma)$,
$\sigma_2=\gamma^d/(\gamma^d+\bar\gamma/4)$,
$\mathbf P^{\mathrm{reset}}=q_{\mathrm{ads}}\overline{\bm p}\mathbf1^\top$ and
$\mathbf P^{\mathrm{cont}}=q_{\mathrm{ads}}\mathbf I$, the output
$\mathbf e_1^\top\bm W[\mathbf I-\mathbf P^{(\eta)}\bm\Sigma]^{-1}
q_{\mathrm{ads}}\overline{\bm p}$ with $\eta\sim\mathcal U(0,1)$ gives quantiles
$(0.117,0.282,0.568)$ and the right half of Table~\ref{tab:sens}. The $\eta$
indices are nonzero but small over this deliberately broad lossy box, and the
negative PRCC has the sign predicted by Corollary~\ref{cor:remix}: resetting
reweights uptake toward the lower-retention target, and increasing $\eta$ moves
the allocation back toward the first-adsorption split. Overall, these global audits demonstrate that while specific kinetic parameters quantitatively tune local uptake, the reduced spatial model remains structurally stable across broad parametric bounds.

\section{Acceptance criteria and batch size}
\label{sec:acceptance}
The null model of \S\ref{sec:null} asks how much between-organoid variance is attributable to transport. The same question at the next scale ---
how much between-batch variance is attributable to process rather than
biology --- has a structural answer worth recording, because it bounds what
batch size can achieve.

To quantify these bounds mathematically, we introduce a variance model with nested line, batch, and process random effects through a positive, centred, bounded multiplier. Let $Z$ denote independent standard normals
conditioned on $|Z|\le4$, let
$\mathcal N(c)=\mathbb E[e^{cZ}\mid|Z|\le4]$, and set
\begin{equation}
\theta_{bj}=\theta_0\prod_{r\in\{\mathrm{line},\mathrm{batch},\mathrm{proc}\}}
\frac{\exp(w_\theta s_rZ_{r,bj})}{\mathcal N(w_\theta s_r)},
\label{eq:varmodel}
\end{equation}
where the hierarchy is explicit:
$Z_{\mathrm{line},bj}=Z_{\mathrm{line}}$ is shared by all batches from the
line, $Z_{\mathrm{batch},bj}=Z_b$ is shared within batch $b$, and
$Z_{\mathrm{proc},bj}=Z_{bj}$ is independent across organoids. The calculation
uses
\begin{align*}
\mathcal P_+&=\{\kappa',\bar\gamma,\ell,r_{\mathrm{grow}},k_{PD},b_{PD},
k_{DM},b_{DM},k_{\mathrm{ap,clr}},\alpha_R\},\\
\mathcal P_-&=\{\gamma^d,d_0,k_{\mathrm{stress}},\alpha_Q,\Theta\},
\end{align*}
with $w_\theta=+1$ for $\theta\in\mathcal P_+$,
$w_\theta=-1$ for $\theta\in\mathcal P_-$, and $w_\theta=0$ for every other
parameter. Thus a positive quality shift raises reactivity, internalization,
size and maturation-promoting rates while lowering desorption and
disease-promoting rates. The exponential prevents negative rates, the centring
preserves $\mathbb E\theta_{bj}=\theta_0$, and the truncation gives compact
support --- which is what Proposition~\ref{prop:lip} needs. A signed loading,
rather than one unsigned scalar, is what allows a common quality shift to raise
reactivity while lowering desorption.

Having defined how process variability shifts the underlying kinetic parameters, we now map these variations to the final phenotypic criteria to determine batch success. Specifically, an organoid passes if
$\upsilon_{j,\mathrm{mt}}(T)\ge\upsilon^{\min}_{\mathrm{mt}}$ and
$\upsilon_{j,\mathrm{ds}}(T)\le\upsilon^{\max}_{\mathrm{ds}}$; a batch is
accepted if a fraction at least $r_{\min}$ of its $N_b$ organoids pass
and the batch mean maturation exceeds
$\overline\upsilon^{\min}_{\mathrm{mt}}$, and we write
$P_{\mathrm{accept}}$ for the probability of acceptance. Thresholds are set
relative to the qualified baseline of \S\ref{sec:numerics}
($\upsilon_{\mathrm{mt}}=0.686$, $\upsilon_{\mathrm{ds}}=0.049$): $\upsilon^{\min}_{\mathrm{mt}}=0.58$,
$\upsilon^{\max}_{\mathrm{ds}}=0.18$,
$\overline\upsilon^{\min}_{\mathrm{mt}}=0.62$, $r_{\min}=0.80$,
$s_{\mathrm{line}}=0.05$; $4\times10^4$ batches per grid point, seed $24680$.

The full grid of resulting acceptance probabilities is presented in Table~\ref{tab:accept}, which exhibits two key structural facts regarding batch size scaling. First. \textbf{variance scaling and shared effects.}
Linearizing any smooth potency output as
$Y_{bj}=Y_0+c_Y(s_{\mathrm{line}}Z_{\mathrm{line}}
+s_{\mathrm{batch}}Z_b+s_{\mathrm{proc}}Z_{bj})+o(s)$ and writing
$v_4=\Var(Z\mid|Z|\le4)=0.998929$ for the truncated-normal variance gives
\[
\Var(\overline Y_b)=c_Y^2v_4\Bigl(s^2_{\mathrm{line}}+s^2_{\mathrm{batch}}
+\frac{s^2_{\mathrm{proc}}}{N_b}\Bigr)+o(s^2).
\]
Only the independent within-batch term is averaged away, and only as $N_b^{-1}$;
the two shared terms are unaffected by batch size. (If the line effect is held
fixed --- a single qualified line --- the calculation is conditional on
$Z_{\mathrm{line}}$ and the $s^2_{\mathrm{line}}$ term is absent.)

\begin{table}[htbp]\centering\small
\caption{Acceptance probability $P_{\mathrm{accept}}$ at $s_{\mathrm{line}}=0.05$
throughout. The final column is the $N_b\to\infty$ limit
\eqref{eq:acceptlimit}, evaluated by quadrature; note that it is interior in
every row. Rows are labelled by which variance component is varied, not by which
is present.}
\label{tab:accept}
\begin{tabular}{@{}rr|rrrrr|r@{}}
\toprule
$s_{\mathrm{batch}}$&$s_{\mathrm{proc}}$&$N_b{=}5$&$10$&$20$&$50$&$100$&limit\\
\midrule
\multicolumn{8}{@{}l}{\emph{varying batch variance, at fixed line variance}}\\
0.00&0.000&0.978&0.979&0.979&0.980&0.978&0.980\\
0.05&0.000&0.923&0.924&0.922&0.923&0.921&0.923\\
0.10&0.000&0.807&0.806&0.805&0.810&0.805&0.808\\
0.15&0.000&0.717&0.715&0.714&0.717&0.714&0.717\\
0.20&0.000&0.659&0.655&0.656&0.653&0.656&0.656\\
0.25&0.000&0.613&0.609&0.612&0.607&0.611&0.609\\
\midrule
\multicolumn{8}{@{}l}{\emph{varying process variance, at fixed line variance}}\\
0.00&0.050&0.963&0.971&0.974&0.974&0.977&0.976\\
0.00&0.100&0.890&0.900&0.909&0.914&0.919&0.917\\
0.00&0.125&0.824&0.825&0.821&0.821&0.823&0.820\\
0.00&0.150&0.745&0.729&0.704&0.684&0.676&0.662\\
0.00&0.200&0.605&0.535&0.449&0.374&0.336&0.280\\
0.00&0.250&0.493&0.377&0.268&0.153&0.104&0.046\\
\midrule
\multicolumn{8}{@{}l}{\emph{both varied}}\\
0.10&0.100&0.743&0.738&0.741&0.729&0.726&0.717\\
0.15&0.150&0.610&0.578&0.559&0.539&0.535&0.523\\
\bottomrule
\end{tabular}
\end{table}

Second. \textbf{Non-degenerate large-batch limits.} Consequently, because the shared variance components do not average out, the overall acceptance probability does not tend to zero or one. Conditional on the shared effects $H=(Z_{\mathrm{line}},Z_{\mathrm{batch}})$ the organoid outcomes are
independent and identically distributed, so the pass fraction concentrates not
on a constant but on the random conditional pass probability. Writing
\[
p(H)=\Pr\bigl(\text{organoid passes}\mid H\bigr),
\qquad
\mu(H)=\mathbb E\bigl[\upsilon_{j,\mathrm{mt}}(T)\mid H\bigr],
\]
the law of large numbers applied conditionally gives
\begin{equation}
P_{\mathrm{accept}}\;\longrightarrow\;
\Pr_H\bigl\{\,p(H)\ge r_{\min}\ \text{and}\ \mu(H)\ge
\overline\upsilon^{\min}_{\mathrm{mt}}\,\bigr\},
\qquad N_b\to\infty,
\label{eq:acceptlimit}
\end{equation}
which is a probability over the shared effects and is generally strictly
interior. It degenerates to zero or one only when the shared effects are absent
($s_{\mathrm{line}}=s_{\mathrm{batch}}=0$), or after conditioning on them. The
final column of Table~\ref{tab:accept} evaluates \eqref{eq:acceptlimit} by
quadrature and the finite-$N_b$ columns converge to it in every row.

It is worth being explicit about the statistical fallacy this conditional limit corrects, as it is easy to conflate the marginal expected pass rate with the actual limit. The marginal pass probability $\mathbb E[p(H)]$ is not the relevant threshold. At $s_{\mathrm{proc}}=0.15$ we have
$\mathbb E[p(H)]=0.826>r_{\min}$, which would suggest
$P_{\mathrm{accept}}\to1$; the true limit is $0.662$. At
$s_{\mathrm{proc}}=0.25$, $\mathbb E[p(H)]=0.693<r_{\min}$ would suggest
$P_{\mathrm{accept}}\to0$; the true limit is $0.046$. In these two rows the
batch-mean criterion is nonbinding, so the governing quantity reduces to
$\Pr_H\{p(H)\ge r_{\min}\}$ --- respectively $0.662$ and $0.046$ --- not
whether the average of $p(H)$ clears the threshold. In general both conditions
in \eqref{eq:acceptlimit} must be retained.

\medskip
\noindent\textbf{The limits of batch sizing.} Increasing batch size therefore buys down only the sampling noise in the pass fraction. It leaves an irreducible acceptance risk set by the shared effects
and enlarging batches past the point where
$s^2_{\mathrm{proc}}/N_b\ll s^2_{\mathrm{line}}+s^2_{\mathrm{batch}}$ buys
nothing. The direction of the $N_b$-dependence still changes sign --- rising
where the typical $p(H)$ sits above $r_{\min}$ and falling where it sits below,
with the crossing near $s_{\mathrm{proc}}\approx0.125$ --- but between two
interior values, not between $0$ and $1$. Any statement ranking process against
batch variability is conditional on $N_b$, on the potency margin and on the
thresholds, and we make none.
Figure~\ref{fig:robust} visualizes the finite-$N_b$ values
and their nondegenerate large-batch limits.

\begin{figure}[htbp]\centering
\includegraphics[width=\linewidth]{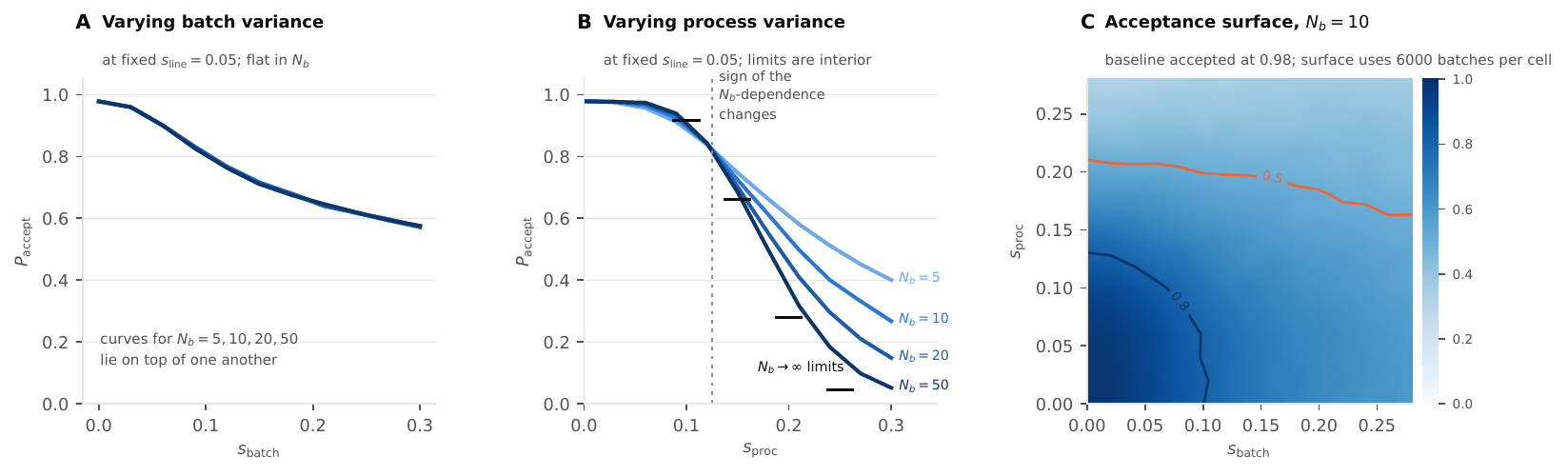}
\caption{Acceptance probability; $s_{\mathrm{line}}=0.05$ is active
throughout. (A) Varying the batch variance at fixed line variance: flat in
$N_b$, because neither shared component averages out. (B) Varying the process
variance at fixed line variance: the $N_b$-dependence changes sign near
$s_{\mathrm{proc}}\approx0.125$, but between interior values --- dashed marks
give the $N_b\to\infty$ limits \eqref{eq:acceptlimit}, which are $0.917$ and
$0.046$ at $s_{\mathrm{proc}}=0.10$ and $0.25$ and are neither $1$ nor $0$.
(C) The acceptance surface at $N_b=10$; the baseline batch is accepted with
probability $0.979$.}
\label{fig:robust}\end{figure}

\section{Discussion}
\label{sec:discussion}

This study quantifies the heterogeneity generated by geometry and delivery when
organoid biology is identical. Across 2000 ten-organoid arrays, the median
transport-only coefficient of variation is $0.623$ for maturation and $0.835$
for disease score; the one-factor design audit moves the maturation median from
$0.27$ to $0.86$. These are model- and layout-specific reference values, not
empirical thresholds. Equation~\eqref{eq:nullmetrics} should therefore be used
as a null against which an imaged array is tested: exceeding it rejects the
specified transport-only explanation, whereas failing to exceed it does not
establish biological homogeneity.

The analysis separates three effects that raw uptake conflates. First, in a
conservative chamber eventual productive uptake is one; retention changes its
timing and allocation, and changes its total only through interaction with
clearance. This agrees with the sensitivity audit, where bulk clearance has
$S_\gamma=0.731$. Second, equal mean residence times do not imply equal
transients: the Gamma, exponential, and hyperexponential laws produce distinct
flux and phenotype histories, so residence-law shape must be retained when
timing matters. Third, the normalized two-target ratio
\eqref{eq:compclosed} cancels source forcing and isolates competition. In lossy
systems, $\pi_j$ are uptake masses rather than probabilities and allocation
must use $\widehat\pi_j=\pi_j/\sum_k\pi_k$.

Partial remixing yields a complementary identifiability result. Continued
search gives $\bm\pi^{(1)}=\overline{\bm\pi}+O(\nu)$, whereas resetting
reweights first-adsorption probabilities by $1-\sigma_j$. Endpoint data are
therefore informative about remixing chiefly when retention is heterogeneous;
time-resolved uptake remains generically informative under homogeneous
retention. This is structural, not practical, identifiability: recovery from
noisy data is not established here.

The strongest design prediction is that distributed dosing equalizes the
transformed forcing. After matching array-mean exposure, it reduces the median
maturation spread fivefold in the baseline ensemble and by factors
$2.1$--$8.4$ across the tested designs. Four separated ports already lower the
median coefficient of variation to $0.27$, versus $0.12$ for ideal uniform
dosing. Increasing batch size addresses a different problem: it averages only
independent process noise, while shared line and batch effects survive and
produce the generally nondegenerate limit \eqref{eq:acceptlimit}. Hence neither
dosing benefits nor variance rankings are universal effect sizes.

The individual mathematical ingredients are not presented as new. The renewal formulation, encounter-based adsorption, Dirichlet-to-Neumann reduction, and target-to-target re-adsorption machinery are inherited from earlier work \cite{bressloff2025diffusion,bressloff2025asymptotic,bressloff2026renewal}. Resetting mixtures and geminate re-encounters are also classical \cite{kusmierz2014first,riascos2020random,gonzalez2021diffusive,
janson2012hitting,noyes1955kinetics,
northrup1979short,agmon1990theory,berg1977physics}. The contribution here is their composition into a transport-to-phenotype null model for liver-cancer organoid arrays, together with the forcing-equalization result, the corrected interpretation of retention, the competition normalization, the remixing identifiability consequence, and the propagation to a batch-acceptance limit.

The two principal numerical approximations were tested separately. First, the Green-function reduction was compared with an exact radial benchmark and with $P_2$ finite-element solutions of the full perforated-domain problem for $N=2,5,10$ targets, both conservative and lossy, at transform points $s\in\{0,1,5\}$. The measured convergence slopes are $2.00$--$2.15$, consistent with the $O(\varepsilon^2)$ productive-flux error proved for circular targets. The verification is functional-specific: at separations of about four target
radii, the phenotype-CV error is already below $10^{-3}$ relative, whereas individual flux errors remain approximately $0.5$--$2\%$. The null ensemble and the coarsest design cells lie in the tested range where phenotype coefficients of variation are accurate to well below $1\%$, but this does not imply $10^{-3}$ accuracy for every individual flux. Second, numerical Laplace inversion shows that the Gamma reconstruction changes the principal between-organoid coefficient of variation by less than $0.06\%$ and terminal maturation by less than $0.4\%$ in the Markovian calculations supporting the null model. The non-Markovian residence comparison is reported from exact inversion because the same surrogate is less reliable there.

The model is deliberately limited. Parameters are literature-scaled rather
than line-specific, and the two-dimensional compartment model omits
intra-organoid gradients, cell composition, necrotic cores, matrix mechanics,
immune interactions, convection, and realistic perfusion. The three-dimensional
reduction is formal and not numerically verified; accessibility is
time independent; the design audit is one factor at a time; and the cell-state
and release thresholds are illustrative. The immediate tests are therefore
specific: compare localized with distributed dosing in the same imaged array,
and use heterogeneous-retention or time-resolved experiments to probe remixing.
The central conclusion is correspondingly narrow: transport alone can generate
substantial, design-dependent apparent heterogeneity and should be quantified
before residual variation is attributed to biology.

\section*{Appendix}

\begin{appendices}

\section{Well-posedness, positivity, and mass balance}
\label{app:wellposedness}

Let
\[
\Gamma_0=\partial\Omega,\qquad
\Gamma_j=\partial\mathcal U_j,\qquad
\partial\Omega_\varepsilon=\Gamma_0\cup\bigcup_{j=1}^N\Gamma_j.
\]
The unit normal $\mathbf n_j$ points from $\mathcal U_j$ into
$\Omega_\varepsilon$; hence the outward normal $\boldsymbol\nu_\varepsilon$
of $\Omega_\varepsilon$ satisfies
$\boldsymbol\nu_\varepsilon=-\mathbf n_j$ on $\Gamma_j$.

\begin{proposition}[Well-posedness and positivity]
\label{prop:bulk_surface_wellposed}
Assume that $\Omega_\varepsilon$ is a bounded $C^2$ domain, $D_m>0$, and
\[
\gamma_m,\ \kappa_{jm},\ \gamma^d_{jm},\ \bar\gamma_{jm}\geq0.
\]
Let
\[
u_{m,0}\in L^2(\Omega_\varepsilon),\qquad
q_{jm,0}\in L^2(\Gamma_j),\qquad
I_m\in L^1\!\left(0,T;L^2(\Omega_\varepsilon)\right).
\]
Then \eqref{eq:bulk}--\eqref{eq:surface} has a unique mild solution
\[
u_m\in C\!\left([0,T];L^2(\Omega_\varepsilon)\right),\qquad
q_{jm}\in C\!\left([0,T];L^2(\Gamma_j)\right).
\]
If $u_{m,0}$, $q_{jm,0}$, and $I_m$ are nonnegative almost everywhere, then
$u_m$ and $q_{jm}$ remain nonnegative almost everywhere.
\end{proposition}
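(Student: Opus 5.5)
We treat the bulk--surface system as an abstract Cauchy problem on the product Hilbert space
\[
\mathcal H=L^2(\Omega_\varepsilon)\times\prod_{j=1}^N L^2(\Gamma_j),
\]
one species at a time, since species are uncoupled in \eqref{eq:bulk}--\eqref{eq:surface}.

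\textbf{Step 1: the bilinear form.} For $(u,\mathbf q),(v,\mathbf p)\in V=H^1(\Omega_\varepsilon)\times\prod_jL^2(\Gamma_j)$ we use the unweighted form
\[
a\bigl((u,\mathbf q),(v,\mathbf p)\bigr)=\int_{\Omega_\varepsilon}\bigl(D\nabla u\cdot\nabla v+\gamma uv\bigr)
+\sum_j\int_{\Gamma_j}\bigl(\kappa_ju-\gamma^d_jq_j\bigr)v
-\sum_j\int_{\Gamma_j}\bigl(\kappa_ju-(\gamma^d_j+\bar\gamma_j)q_j\bigr)p_j .
\]
On $\Gamma_j$ the outward normal is $-\mathbf n_j$, so integration by parts gives the boundary term $+\int_{\Gamma_j}(D\nabla u\cdot\mathbf n_j)v$, and \eqref{eq:surface} turns this into the first boundary sum.

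\textbf{Step 2: continuity and G\aa rding.} The form is continuous on $V$ by the trace theorem, which holds because $\Omega_\varepsilon$ is $C^2$. The non-coercive cross terms $\int\kappa_j uv$, $\int\gamma^d_jq_jv$ and $\int\kappa_jup_j$ are controlled by the trace interpolation inequality
\[
\|u\|_{L^2(\Gamma_j)}^2\le\delta\|\nabla u\|_{L^2}^2+C_\delta\|u\|_{L^2}^2 .
\]
Young's inequality then yields a G\aa rding inequality $a(w,w)+\lambda\|w\|_{\mathcal H}^2\ge c\|\nabla u\|^2$. The surface components have no gradient part, and this is harmless: their equation is an ODE in $q$ with an $L^2(\Gamma_j)$ forcing.

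\textbf{Step 3: existence via the semigroup.} A cleaner alternative to the form method is a bounded-perturbation argument. Take the generator $A_0$ built from the Neumann Laplacian minus $\gamma$, the Robin term $\kappa_ju$, and the decay $-(\gamma^d_j+\bar\gamma_j)q_j$. The coupling $q_j\mapsto\gamma^d_jq_j$ enters the bulk equation as a boundary flux, and the coupling $u\mapsto\kappa_ju|_{\Gamma_j}$ enters the $q$ equation. Neither is bounded on $\mathcal H$, so we keep the full form and invoke Lions' theorem, or the Kato--Lumer--Phillips theorem for $m$-accretivity of $A+\lambda$, to obtain a $C_0$ semigroup on $\mathcal H$. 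With $I\in L^1(0,T;L^2)$, the variation-of-constants formula defines the unique mild solution in $C([0,T];\mathcal H)$. Uniqueness of mild solutions is standard.

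\textbf{Step 4: positivity.} Positivity of the semigroup follows from the Beurling--Deny/Ouhabaz criterion: the form must satisfy $a(w^+,w^-)\le0$, with $w^+\in V$ whenever $w\in V$. The gradient and $\gamma$ terms give zero in $a(w^+,w^-)$. The remaining terms are
\[
-\kappa_j\int u^+p^-,\qquad -\gamma^d_j\int q^+v^-,\qquad -(\gamma^d_j+\bar\gamma_j)\int q^+p^-\equiv0 .
\]
The two surviving terms are nonpositive because the rates are nonnegative and the off-diagonal couplings enter with a minus sign in $a$; this is exactly the quasi-monotone (Metzler) structure of the exchange. Hence the semigroup is positive, and Duhamel with $I\ge0$ preserves nonnegativity.

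\textbf{Expected obstacle.} The main delicacy is Step 4. The Ouhabaz criterion is usually stated for symmetric or accretive forms on a single lattice, and here the form is nonsymmetric and acts on a product with a component lacking $H^1$ regularity. I would verify the criterion's hypotheses directly: the lattice property of $V$ (componentwise positive parts, with $u^+\in H^1$) and the inequality above. As a fallback, a Galerkin/energy argument with test function $w^-$ shows $\frac{d}{dt}\|w^-\|^2\le2\lambda\|w^-\|^2$, and Gronwall concludes.
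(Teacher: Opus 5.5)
Your proposal is correct. For existence and uniqueness it follows the paper's argument: the same form on $V=H^1(\Omega_\varepsilon)\times\prod_jL^2(\Gamma_j)$, a G\aa rding bound from the trace-interpolation inequality, the operator associated with the form, and Duhamel's formula. Two small tidy-ups are needed. First, delete the bounded-perturbation detour in Step~3, which you yourself show fails. Second, state the G\aa rding bound as $a(w,w)+\lambda\|w\|_{\mathcal H}^2\ge c\|w\|_V^2$; this follows from yours by enlarging $\lambda$.

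The genuine difference is positivity. The paper tests the evolution with $(-u^-,-q_j^-)$ and bounds the exchange terms below by $-(\gamma^d_j+\kappa_j)\int_{\Gamma_j}u^-q_j^-$. It absorbs that term with the trace and Young inequalities, closes with Gronwall, and justifies the computation on Galerkin approximants. You instead verify the first Beurling--Deny condition $a(w^+,w^-)\le0$ on the form, obtain positivity of the semigroup itself from Ouhabaz's criterion, and pass it through Duhamel.

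The hypotheses you were worried about do hold:
\begin{itemize}
\item $\mathcal H=L^2(X,\mu)$ with $X=\Omega_\varepsilon\sqcup\bigsqcup_j\Gamma_j$ carrying volume and surface measure, so it is a single lattice.
\item Ouhabaz's theorem covers nonsymmetric real sectorial forms.
\item $V$ is stable under $w\mapsto w^+$, since the $q$-components need only lie in $L^2(\Gamma_j)$.
\item The $\lambda$-shift does not affect the condition, because $\langle w^+,w^-\rangle_{\mathcal H}=0$.
\item $\kappa_j\int_{\Gamma_j}u^+u^-=0$, because the trace commutes with taking positive parts.
\end{itemize}
One minor slip: the $q^+p^-$ term carries a plus sign in $a$, but it vanishes anyway.

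Your route makes positivity a one-time algebraic check on the form. It applies directly to mild solutions with $L^1(0,T;L^2)$ forcing and shows the quasi-monotone exchange structure as the reason for positivity. It also needs no approximation step, which is where the paper's proof is thinnest: $z_n^-$ is not an admissible test function in a Galerkin space. The paper's energy route is more elementary and self-contained. Its regularization, however, is better done via the chain rule for $\|z^-\|^2$ on $L^2(0,T;V)\cap H^1(0,T;V')$ solutions, followed by density.
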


\begin{proof}
Fix $m$ and suppress its index. Set
\[
\mathbb H=L^2(\Omega_\varepsilon)\oplus
\bigoplus_{j=1}^N L^2(\Gamma_j),\qquad
\mathbb V=H^1(\Omega_\varepsilon)\oplus
\bigoplus_{j=1}^N L^2(\Gamma_j).
\]
For $\mathbf z=(u,q_1,\ldots,q_N)$ and
$\mathbf w=(v,r_1,\ldots,r_N)$ in $\mathbb V$, define
\begin{equation}
\mathfrak a(\mathbf z,\mathbf w)
=D\int_{\Omega_\varepsilon}\nabla u\cdot\nabla v
+\gamma\int_{\Omega_\varepsilon}uv +\sum_{j=1}^N\left[
\kappa_j\int_{\Gamma_j}uv
-\gamma^d_j\int_{\Gamma_j}q_jv
-\kappa_j\int_{\Gamma_j}ur_j
+(\gamma^d_j+\bar\gamma_j)\int_{\Gamma_j}q_jr_j
\right].
\label{eq:appendix_form}
\end{equation}
The trace inequality
\[
\|v\|_{L^2(\Gamma_j)}^2
\leq \delta\|\nabla v\|_{L^2(\Omega_\varepsilon)}^2
+C_\delta\|v\|_{L^2(\Omega_\varepsilon)}^2
\]
implies continuity of $\mathfrak a$. Moreover,
\begin{equation}
\mathfrak a(\mathbf z,\mathbf z)
=D\|\nabla u\|_2^2+\gamma\|u\|_2^2
+\sum_j\kappa_j\|u\|_{L^2(\Gamma_j)}^2
+\sum_j(\gamma^d_j+\bar\gamma_j)\|q_j\|_2^2 -\sum_j(\gamma^d_j+\kappa_j)\int_{\Gamma_j}u q_j.
\end{equation}
Young's inequality and the trace inequality give $\omega\geq0$ and $c_0>0$
such that
\[
\mathfrak a(\mathbf z,\mathbf z)+\omega\|\mathbf z\|_{\mathbb H}^2
\geq c_0\|\mathbf z\|_{\mathbb V}^2.
\]
Thus $\mathfrak a$ is a densely defined, closed, quasi-coercive form. Its
associated operator $\mathcal A$ generates an analytic $C_0$-semigroup on
$\mathbb H$, and
\begin{equation}
\mathbf z(t)=e^{-t\mathcal A}\mathbf z_0+
\int_0^t e^{-(t-\tau)\mathcal A}
\bigl(I(\tau),0,\ldots,0\bigr)\,\mathrm d\tau
\label{eq:appendix_mild}
\end{equation}
is the unique mild solution. The operator domain encodes
\[
D\partial_{\boldsymbol\nu_\varepsilon}u=0\quad\hbox{on }\Gamma_0,
\qquad
D\partial_{\boldsymbol\nu_\varepsilon}u=-\kappa_j u+\gamma^d_jq_j
\quad\hbox{on }\Gamma_j.
\]

For positivity, put $u^-=\max\{-u,0\}$,
$q_j^-=\max\{-q_j,0\}$, and
$\mathcal N_-=\|u^-\|_2^2+\sum_j\|q_j^-\|_2^2$. Testing with
$-u^-$ and $-q_j^-$ gives
\begin{align}
\frac12\frac{\mathrm d}{\mathrm dt}\mathcal N_-
&+D\|\nabla u^-\|_2^2+\gamma\|u^-\|_2^2
+\sum_j\kappa_j\|u^-\|_{L^2(\Gamma_j)}^2
+\sum_j(\gamma^d_j+\bar\gamma_j)\|q_j^-\|_2^2 \nonumber\\
&+\sum_j\left[
\gamma^d_j\int_{\Gamma_j}q_j u^-
+\kappa_j\int_{\Gamma_j}u q_j^-
\right]
=-\int_{\Omega_\varepsilon}Iu^-\leq0.
\label{eq:appendix_negative}
\end{align}
Because
$\gamma^d_jq_ju^-+\kappa_juq_j^-
\geq-(\gamma^d_j+\kappa_j)u^-q_j^-$, the trace and Young inequalities yield
$\mathcal N_-'\leq C\mathcal N_-$. Gronwall's inequality proves positivity.
The argument follows first for Galerkin approximants and then by weak lower
semicontinuity.
\end{proof}

\section{Two-dimensional matched-asymptotic reduction}
\label{app:matched2d}

Fix $m$, suppress it where unambiguous, and set
\[
a(s)=s+\gamma_m,\quad
\chi_j(s)=s+\gamma^d_{jm}+\bar\gamma_{jm},\quad
g_j(s)=\frac{s+\bar\gamma_{jm}}{\chi_j(s)},\quad
K_j(s)=\kappa'_{jm}g_j(s).
\]
Assume $q_{jm,0}=0$ and $\kappa_{jm}=\kappa'_{jm}/\varepsilon$. Then
\begin{subequations}
\begin{align*}
D_m\Delta\widetilde u_m-a(s)\widetilde u_m
&=-f_m^{\mathrm{src}}
&&\text{in }\Omega_\varepsilon,\\
D_m\partial_{\mathbf n}\widetilde u_m&=0
&&\text{on }\partial\Omega,\\
D_m\partial_{\mathbf n_j}\widetilde u_m
&=\frac{K_j(s)}{\varepsilon}\widetilde u_m
&&\text{on }\partial\mathcal U_j.
\end{align*}
\end{subequations}
Moreover,
\begin{equation*}
\widetilde{\mathcal J}^{\mathrm{full}}_{jm}(s)
=\frac{\bar\gamma_{jm}\kappa'_{jm}}{\varepsilon\chi_j(s)}
\int_{\partial\mathcal U_j}\widetilde u_m\,\mathrm dS.
\end{equation*}

\begin{assumption}[Uniform admissible family]
\label{ass:app_uniform_family}
Let $\Omega\subset\mathbb R^2$ be $C^4$ and
$\mathcal U_j=B_{\varepsilon\ell_j}(\mathbf x_j)$, where
$0<\ell_-\leq\ell_j\leq\ell_+$. For some $d_*>0$,
\[
|\mathbf x_i-\mathbf x_j|\geq d_*,\qquad
\operatorname{dist}(\mathbf x_j,\partial\Omega)\geq d_*,\qquad
\operatorname{dist}(\mathbf x_j,\mathcal S_m)\geq d_*.
\]
Source strengths and the regular source component are uniformly bounded. On a
compact interval $\mathcal K_s\subset[0,\infty)$, assume
$a(s)\geq a_->0$, $0<K_-\leq K_j(s)\leq K_+$ and
\begin{equation*}
\sup_{s\in\mathcal K_s}
\left\|\left[
\mathbf I+2\pi\nu D_m\bm{\mathcal G}_m(s)+\nu\bm\Psi_m(s)
\right]^{-1}\right\|\leq C_M,
\qquad \nu=-\frac1{\log\varepsilon}.
\end{equation*}
\end{assumption}

The condition $a(s)\geq a_->0$ includes $s=0$ when $\gamma_m>0$; the
conservative point is treated in Appendix~\ref{app:zeromode}.

\subsection{Outer solution}

Let
\begin{equation*}
D_m\Delta_{\mathbf x}G_m-(s+\gamma_m)G_m=-\delta_{\mathbf z},
\qquad \partial_{\mathbf n}G_m=0\quad\text{on }\partial\Omega,
\end{equation*}
and define
\begin{equation*}
\Gamma_m(\mathbf x,s)=\int_\Omega
G_m(\mathbf x,\mathbf z;s)f_m^{\mathrm{src}}(\mathbf z,s)
\,\mathrm d\mathbf z.
\end{equation*}
The monopole outer approximation is
\begin{equation*}
\widetilde u_m^{\mathrm{out}}(\mathbf x,s)
=\Gamma_m(\mathbf x,s)-2\pi D_m\sum_{k=1}^N
A_{km}(s)G_m(\mathbf x,\mathbf x_k;s).
\end{equation*}
Locally,
\begin{equation*}
G_m(\mathbf x,\mathbf x_j;s)
=-\frac{\log|\mathbf x-\mathbf x_j|}{2\pi D_m}
+R_m(\mathbf x_j,\mathbf x_j;s)
+O\!\left(r_j^2(1+|\log r_j|)\right),
\end{equation*}
where $r_j=|\mathbf x-\mathbf x_j|$. With
$\mathbf x=\mathbf x_j+\varepsilon\mathbf y$, $\rho=|\mathbf y|$, and
$\widehat{\mathbf y}=\mathbf y/\rho$,
\begin{align*}
\widetilde u_m^{\mathrm{out}}
={}&A_{jm}\log\rho+\Gamma_{jm}-\frac{A_{jm}}{\nu}
-2\pi D_m\left[A_{jm}R_{jj,m}
+\sum_{k\ne j}A_{km}G_{jk,m}\right] \nonumber\\
&+\varepsilon\rho\,\mathbf h_{jm}\cdot\widehat{\mathbf y}
+O\!\left(\varepsilon^2\rho^2
(1+|\log(\varepsilon\rho)|)\right),
\end{align*}
where $\Gamma_{jm}=\Gamma_m(\mathbf x_j,s)$,
$G_{jk,m}=G_m(\mathbf x_j,\mathbf x_k;s)$,
$R_{jj,m}=R_m(\mathbf x_j,\mathbf x_j;s)$,
and $\mathbf h_{jm}$ is the gradient at $\mathbf x_j$ of the regular outer
field.

\subsection{Inner problem and matching}

Write
\[
\widetilde u_m(\mathbf x_j+\varepsilon\mathbf y,s)
\sim U^{(0)}_{jm}(\mathbf y,s)+\varepsilon U^{(1)}_{jm}(\mathbf y,s)+\cdots.
\]
At leading order,
\[
\Delta_{\mathbf y}U^{(0)}_{jm}=0,\qquad
D_m\partial_\rho U^{(0)}_{jm}=K_j(s)U^{(0)}_{jm}
\quad\text{at }\rho=\ell_j.
\]
The radial solution with logarithmic strength $A_{jm}$ is
\[
U^{(0)}_{jm}(\rho,s)
=A_{jm}\log\rho+A_{jm}\bigl[\Psi_{jm}(s)-\log\ell_j\bigr],
\qquad
\Psi_{jm}(s)=\frac{D_m}{K_j(s)\ell_j}.
\]
Matching constants gives
\[
\Gamma_{jm}=\frac{A_{jm}}{\nu}
+2\pi D_m\sum_{k\ne j}G_{jk,m}A_{km}
+\left[2\pi D_mR_{jj,m}-\log\ell_j+\Psi_{jm}\right]A_{jm}.
\]
Define
\[
(\bm{\mathcal G}_m)_{jk}=
\begin{cases}
G_m(\mathbf x_j,\mathbf x_k;s),&j\ne k,\\[1mm]
R_m(\mathbf x_j,\mathbf x_j;s)
-\dfrac{\log\ell_j}{2\pi D_m},&j=k,
\end{cases}
\qquad
\bm\Psi_m=\operatorname{diag}(\Psi_{1m},\ldots,\Psi_{Nm}).
\]
Then
\begin{equation}
\left[\nu^{-1}\mathbf I+2\pi D_m\bm{\mathcal G}_m(s)
+\bm\Psi_m(s)\right]\mathbf A_m(s)=\bm\Gamma_m(s),
\label{eq:app_matching_system}
\end{equation}
or
\begin{equation}
\mathbf A_m(s)=\nu\left[
\mathbf I+2\pi\nu D_m\bm{\mathcal G}_m(s)+\nu\bm\Psi_m(s)
\right]^{-1}\bm\Gamma_m(s).
\label{eq:app_amplitude}
\end{equation}
In particular, $\sup_{s\in\mathcal K_s}\|\mathbf A_m(s)\|\leq C\nu$.

\subsection{Productive flux}

Since $U^{(0)}_{jm}(\ell_j,s)=A_{jm}\Psi_{jm}$,
\begin{equation}
\widetilde{\mathcal J}^{\mathrm{red}}_{jm}(s)
=\frac{\bar\gamma_{jm}\kappa'_{jm}}{\varepsilon\chi_j(s)}
\int_0^{2\pi}U^{(0)}_{jm}(\ell_j,s)\,
\varepsilon\ell_j\,\mathrm d\theta =\frac{2\pi D_m\bar\gamma_{jm}}{s+\bar\gamma_{jm}}A_{jm}(s).
\label{eq:app_reduced_flux}
\end{equation}

\subsection{Remainder and scope}

The first angular correction is
\begin{equation}
U^{(1)}_{jm}(\rho,\theta;s)
=(\mathbf h_{jm}\cdot\widehat{\mathbf y})
\left(\rho+\beta_{jm}(s)\frac{\ell_j^2}{\rho}\right),
\qquad
\beta_{jm}(s)=\frac{D_m-K_j(s)\ell_j}{D_m+K_j(s)\ell_j}.
\label{eq:app_dipole}
\end{equation}
It satisfies the Robin condition and
\begin{equation}
\int_0^{2\pi}U^{(1)}_{jm}(\ell_j,\theta;s)\,\mathrm d\theta
=\int_0^{2\pi}\partial_\rho U^{(1)}_{jm}(\ell_j,\theta;s)
\,\mathrm d\theta=0.
\label{eq:app_zero_dipole_flux}
\end{equation}

\begin{proposition}[Conditional uniform flux estimate]
\label{prop:app_flux_error}
Suppose, in addition to Assumption~\ref{ass:app_uniform_family}, that a
second-order composite approximation $\widetilde u_m^{\mathrm{app}}$ has
residuals
$\mathcal R_{\varepsilon,m}=D_m\Delta\widetilde u_m^{\mathrm{app}}
-a(s)\widetilde u_m^{\mathrm{app}}+f_m^{\mathrm{src}}$,
$\mathcal B_{\varepsilon,jm}=D_m\partial_{\mathbf n_j}\widetilde u_m^{\mathrm{app}}
-\dfrac{K_j(s)}{\varepsilon}\widetilde u_m^{\mathrm{app}}$,
and that
\begin{equation}
\|\mathcal R_{\varepsilon,m}\|_{H^{-1}(\Omega_\varepsilon)}
+\left[\sum_{j=1}^N\frac{\varepsilon}{K_j(s)}
\|\mathcal B_{\varepsilon,jm}\|_{L^2(\partial\mathcal U_j)}^2
\right]^{1/2}\leq C_R\varepsilon^2.
\label{eq:app_residual_bound}
\end{equation}
If the mean trace of its second inner corrector is uniformly bounded, then
\[
\sup_{s\in\mathcal K_s}\max_{1\leq j\leq N}
\left|\widetilde{\mathcal J}^{\mathrm{full}}_{jm}(s)
-\widetilde{\mathcal J}^{\mathrm{red}}_{jm}(s)\right|
\leq C\varepsilon^2.
\]
The constant is uniform in $j$, $s$, and all admissible source and target
positions, but not as $d_*\downarrow0$, $a_-\downarrow0$, $K_-\downarrow0$,
$N\to\infty$, or $C_M\to\infty$.
\end{proposition}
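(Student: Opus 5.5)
The plan is an energy (coercivity) estimate for the error $e=\widetilde u_m-\widetilde u_m^{\mathrm{app}}$ in a weighted norm. We then read the flux error off the Robin trace and compare the approximate flux with the reduced flux \eqref{eq:app_reduced_flux}.

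\textbf{Step 1: energy estimate for $e$.} The error satisfies
\[
D_m\Delta e-a(s)e=\mathcal R_{\varepsilon,m}\ \text{in }\Omega_\varepsilon,\qquad
\partial_{\mathbf n}e=0\ \text{on }\partial\Omega,
\]
together with $D_m\partial_{\mathbf n_j}e-(K_j/\varepsilon)e=-\mathcal B_{\varepsilon,jm}$ on $\partial\mathcal U_j$. Test against $e$ with the norm
\[
\|e\|_\star^2=D_m\|\nabla e\|^2+a\|e\|^2+\sum_j (K_j/\varepsilon)\|e\|^2_{L^2(\partial\mathcal U_j)} .
\]
Since $a\ge a_-$ and $K_j\ge K_-$, the form is coercive in $\|\cdot\|_\star$, uniformly in $\varepsilon$. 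Cauchy--Schwarz then gives
\[
\|e\|_\star^2\le \|\mathcal R\|_{H^{-1}}\|e\|_{H^1}+\sum_j\|\mathcal B_j\|\,\|e\|_{L^2(\partial\mathcal U_j)} .
\]
The boundary term is bounded by $\bigl[\sum_j(\varepsilon/K_j)\|\mathcal B_j\|^2\bigr]^{1/2}\|e\|_\star$. Using \eqref{eq:app_residual_bound}, this yields $\|e\|_\star\le C C_R\varepsilon^2$. The $H^{-1}$ dual pairing needs an extension and Poincaré-type constant that is uniform on perforated domains with $d_*$ fixed. Because $a\ge a_->0$, this is clean: the $\|\cdot\|_\star$ norm controls the full $H^1$ norm without any Poincaré inequality.

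\textbf{Step 2: from the energy bound to the flux.} The full flux is $(\bar\gamma\kappa'/\varepsilon\chi_j)\int_{\partial\mathcal U_j}\widetilde u$, so the flux error splits into two parts:
\[
\frac{\bar\gamma\kappa'}{\varepsilon\chi_j}\int_{\partial\mathcal U_j}e
\;+\;\Bigl[\frac{\bar\gamma\kappa'}{\varepsilon\chi_j}\int_{\partial\mathcal U_j}\widetilde u^{\mathrm{app}}-\widetilde{\mathcal J}^{\mathrm{red}}_{jm}\Bigr].
\]
A direct Cauchy--Schwarz on the first term only gives
\[
\varepsilon^{-1}|\partial\mathcal U_j|^{1/2}\|e\|_{L^2(\partial\mathcal U_j)}\lesssim \varepsilon^{-1/2}\cdot\varepsilon^{1/2}\|e\|_\star\lesssim\varepsilon^2 ,
\]
where we use $\|e\|_{L^2(\partial\mathcal U_j)}\le (\varepsilon/K_-)^{1/2}\|e\|_\star$. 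This is already $O(\varepsilon^2)$, uniformly in $j$ and $s$, so this term is fine. I would double-check the powers. A more robust alternative is to write the mean trace as a normal-flux integral and use a Green identity against a bounded test function equal to $1$ near $\mathcal U_j$. That avoids relying on the trace constant's $\varepsilon$-scaling.

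\textbf{Step 3: approximate flux versus reduced flux.} Decompose the boundary mean of $\widetilde u^{\mathrm{app}}$ into inner correctors.
\begin{itemize}
\item Order zero: $U^{(0)}$ reproduces exactly \eqref{eq:app_reduced_flux}.
\item Order $\varepsilon$: $U^{(1)}$ has zero angular mean by \eqref{eq:app_zero_dipole_flux}, so it contributes nothing.
\item Order $\varepsilon^2$: $U^{(2)}$ contributes $\varepsilon^2(\bar\gamma\kappa'/\varepsilon\chi_j)\varepsilon\ell_j\int U^{(2)}\,\mathrm d\theta$. This is $O(\varepsilon^2)$ precisely by the hypothesis that the mean trace of the second inner corrector is uniformly bounded.
\end{itemize}
Matching errors from the outer region vanish on $\partial\mathcal U_j$ when the composite is built as inner-plus-outer-minus-common. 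The amplitudes are uniformly bounded by $C_M$ through \eqref{eq:app_amplitude}, so $\sup\|\mathbf A_m\|\le C\nu$ and all constants are uniform in the stated parameters.

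\textbf{Main obstacle.} The delicate point is Step 1's uniformity: coercivity constants and the trace/extension constants on $\Omega_\varepsilon$ must not degenerate as $\varepsilon\to0$. I would handle this by working on the fixed domain $\Omega$ and extending $e$ into the holes harmonically, using scaled trace inequalities on annuli $\varepsilon\ell_j<|\mathbf x-\mathbf x_j|<2\varepsilon\ell_j$. Keeping track of the factor $\varepsilon/K_j$ in the boundary weight is exactly what makes \eqref{eq:app_residual_bound} the natural hypothesis. The constant then degenerates only as $d_*,a_-,K_-\to0$ or $C_M,N\to\infty$.
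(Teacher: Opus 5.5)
Your proposal follows the paper's proof essentially step for step: the same weighted energy norm $\|\cdot\|_\star$ (the paper's $\|\cdot\|_{\mathcal E}$) for the error, the same Cauchy--Schwarz trace bound via $K_j|\partial\mathcal U_j|/\varepsilon=2\pi K_j\ell_j$, and the same corrector bookkeeping (exact reduced flux from $U^{(0)}$, zero-mean dipole, bounded second mean). Two small points. First, the outer-minus-common remainder of the composite need not vanish on $\partial\mathcal U_j$; what is needed is that its boundary mean is $O(\varepsilon^2)$, so that it enters the flux at the same order as $U^{(2)}$. Second, the extension and annulus machinery in your last paragraph is unnecessary, because the bilinear form tested with $e$ equals $\|e\|_\star^2$ exactly, so no trace or Poincar\'e constant enters.
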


\begin{proof}
Let $w_m=\widetilde u_m^{\mathrm{full}}-\widetilde u_m^{\mathrm{app}}$ and
$\|v\|_{\mathcal E}^2=D_m\|\nabla v\|_2^2+a(s)\|v\|_2^2
+\sum_j\frac{K_j(s)}{\varepsilon} \|v\|_{L^2(\partial\mathcal U_j)}^2$.
The error equation tested with $w_m$ and \eqref{eq:app_residual_bound} yield
$\|w_m\|_{\mathcal E}\leq C\varepsilon^2$. Hence
\[
\left|\widetilde{\mathcal J}^{\mathrm{full}}_{jm}
-\widetilde{\mathcal J}^{\mathrm{app}}_{jm}\right| \leq\frac{\bar\gamma_{jm}}{s+\bar\gamma_{jm}}
\left(\frac{K_j|\partial\mathcal U_j|}{\varepsilon}\right)^{1/2}
\left(\frac{K_j}{\varepsilon}
\|w_m\|_{L^2(\partial\mathcal U_j)}^2\right)^{1/2}\leq C\varepsilon^2,
\]
because $K_j|\partial\mathcal U_j|/\varepsilon=2\pi K_j\ell_j$.
Equation~\eqref{eq:app_zero_dipole_flux} removes the $O(\varepsilon)$
contribution to the integrated flux; the bounded second mean gives
$\widetilde{\mathcal J}^{\mathrm{app}}_{jm}
=\widetilde{\mathcal J}^{\mathrm{red}}_{jm}+O(\varepsilon^2)$.
\end{proof}

\paragraph{Status of the remainder.}
Equations~\eqref{eq:app_matching_system} and \eqref{eq:app_reduced_flux} are the
formal matched expansion. The $O(\varepsilon^2)$ estimate is rigorous only
under the explicit residual hypothesis \eqref{eq:app_residual_bound}; the
zero-mean dipole alone does not prove that hypothesis.

\section{Conservative zero-mode cancellation}
\label{app:zeromode}

Fix $m$, set $\gamma_m=0$, and write $V=|\Omega|$. Let
\[
-D_m\Delta\varphi_n=\lambda_n\varphi_n,\qquad
\partial_{\mathbf n}\varphi_n=0,\qquad
\int_\Omega\varphi_n\varphi_k\,\mathrm d\mathbf x=\delta_{nk},
\]
where $\lambda_0=0$, $\varphi_0=V^{-1/2}$, and
$0<\lambda_1\leq\lambda_2\leq\cdots$.

\begin{proposition}[Neumann zero mode]
\label{prop:app_neumann_zero_mode}
For $s>0$,
\begin{equation*}
G_m(\mathbf x,\mathbf z;s)=\frac1{sV}+G_m^0(\mathbf x,\mathbf z)
+sG_m^1(\mathbf x,\mathbf z)+O(s^2),
\end{equation*}
where
\[
G_m^0(\mathbf x,\mathbf z)=\sum_{n\geq1}\frac{\varphi_n(\mathbf x)\varphi_n(\mathbf z)}{\lambda_n},\quad
G_m^1(\mathbf x,\mathbf z)=-\sum_{n\geq1}\frac{\varphi_n(\mathbf x)\varphi_n(\mathbf z)}{\lambda_n^2}.
\]
Moreover,
\begin{equation}
D_m\Delta_{\mathbf x}G_m^0=-\delta_{\mathbf z}+\frac1V,\qquad
\partial_{\mathbf n}G_m^0=0,\qquad
\int_\Omega G_m^0(\mathbf x,\mathbf z)\,\mathrm d\mathbf x=0.
\label{eq:app_pseudogreen_problem}
\end{equation}
The expansion is locally uniform away from $\mathbf x=\mathbf z$, and for the
diagonal regular part after subtraction of
$-\log|\mathbf x-\mathbf z|/(2\pi D_m)$.
\end{proposition}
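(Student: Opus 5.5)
The plan is to work entirely from the spectral representation of the Neumann Green's function. The Neumann Laplacian on the bounded domain $\Omega$ has compact resolvent, so the series
\[
G_m(\mathbf x,\mathbf z;s)=\sum_{n\ge0}\frac{\varphi_n(\mathbf x)\varphi_n(\mathbf z)}{\lambda_n+s}
\]
holds in the distributional sense, and in $L^2(\Omega\times\Omega)$ after subtracting finitely many terms. This is the same representation as \eqref{eq:series} with $\gamma_m=0$. The argument then has three steps.

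\textbf{Step 1: the zero mode.} Isolate the $n=0$ term. Since $\varphi_0=V^{-1/2}$ and $\lambda_0=0$, it equals exactly $1/(sV)$.

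\textbf{Step 2: Taylor expansion of the remaining terms.} For $n\ge1$ and $0<s<\lambda_1$, expand
\[
\frac1{\lambda_n+s}=\frac1{\lambda_n}-\frac{s}{\lambda_n^2}+\frac{s^2}{\lambda_n^2(\lambda_n+s)},
\]
which is an exact identity. Summing over $n$ produces $G^0_m$, $sG^1_m$, and a remainder
\[
s^2\sum_{n\ge1}\frac{\varphi_n(\mathbf x)\varphi_n(\mathbf z)}{\lambda_n^2(\lambda_n+s)}.
\]

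\textbf{Step 3: uniform bounds on the series.} This is the main obstacle, namely making the convergence of these series uniform. By Weyl's law in two dimensions, $\lambda_n\sim cn$. Together with the standard bound $\sum_n\varphi_n(\mathbf x)^2e^{-t\lambda_n}\le Ct^{-1}$ (heat-kernel diagonal bound, valid for $C^2$ boundaries), this gives
\[
\sum_n\frac{\varphi_n(\mathbf x)^2}{\lambda_n^{k}}<\infty\quad\text{uniformly in }\mathbf x\ \text{for }k\ge2,
\]
since $\lambda^{-k}=\Gamma(k)^{-1}\int_0^\infty t^{k-1}e^{-t\lambda}\,\mathrm dt$ and the $t$-integral converges for $k>1$. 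By Cauchy--Schwarz, the series for $G^1_m$ and the remainder series then converge absolutely and uniformly on all of $\Omega\times\Omega$, including the diagonal. The remainder is bounded by $s^2\lambda_1^{-1}\sup_{\mathbf x}\sum_n\varphi_n(\mathbf x)^2\lambda_n^{-2}$, which gives the $O(s^2)$ claim uniformly. Only $G^0_m$, with $k=1$, diverges logarithmically on the diagonal. The stated uniformity away from $\mathbf x=\mathbf z$, and for the diagonal regular part, therefore reduces to the same property of $G^0_m$ alone: all higher-order terms are continuous everywhere, so the $-\log|\mathbf x-\mathbf z|/(2\pi D_m)$ singularity is carried entirely by $G^0_m$, exactly as for $G_m(\cdot;s)$. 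Subtracting it leaves a continuous regular part, and $R_m(\cdot;s)=1/(sV)+R^0_m+sG^1_m+O(s^2)$ follows on the diagonal.

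\textbf{Step 4: the characterization \eqref{eq:app_pseudogreen_problem}.} Apply $D_m\Delta_{\mathbf x}$ termwise in distributions to the series for $G^0_m$. Each term gives $-\varphi_n(\mathbf x)\varphi_n(\mathbf z)$. By completeness of the eigenfunctions, $\sum_{n\ge0}\varphi_n(\mathbf x)\varphi_n(\mathbf z)=\delta_{\mathbf z}$, so
\[
D_m\Delta_{\mathbf x}G^0_m=-(\delta_{\mathbf z}-\varphi_0^2)=-\delta_{\mathbf z}+1/V.
\]
The Neumann condition is inherited from each $\varphi_n$: justify it via the weak formulation against $H^1$ test functions, where the series converges in $H^{-1}$. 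The zero-mean condition follows because $\int_\Omega\varphi_n=V^{1/2}\langle\varphi_n,\varphi_0\rangle=0$ for $n\ge1$, with termwise integration justified by $L^2$ convergence in $\mathbf x$ for fixed $\mathbf z$. Uniqueness of the pseudo-Green's function among zero-mean Neumann solutions confirms the identification with \eqref{eq:ewald}.
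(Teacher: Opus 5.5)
Your proposal is essentially the paper's proof. The paper's argument consists only of isolating the $n=0$ term of $\sum_{n\ge0}\varphi_n(\mathbf x)\varphi_n(\mathbf z)/(\lambda_n+s)$ and expanding $(\lambda_n+s)^{-1}$ for $n\ge1$; your Steps 3--4 add the uniformity and the characterization \eqref{eq:app_pseudogreen_problem}, which the paper leaves unargued.

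Two small repairs are needed:
\begin{enumerate}
\item In Step 3, the $t$-integral converges at $t=\infty$ only because the sum runs over $n\ge1$ and therefore decays like $e^{-\lambda_1t}$. The bound $Ct^{-1}$ alone leaves $\int^\infty t^{k-2}\,\mathrm dt=\infty$.
\item In Step 4, the Neumann condition cannot be checked against $H^1$ test functions, since in two dimensions $\delta_{\mathbf z}\notin H^{-1}$ and $G^0_m(\cdot,\mathbf z)\notin H^1$. Instead, take the zero-mean $W^{1,p}$ ($p<2$) solution of \eqref{eq:app_pseudogreen_problem}, test it against $\varphi_n$, and verify that its coefficients are $\varphi_n(\mathbf z)/\lambda_n$.
\end{enumerate}
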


\begin{proof}
Use
$G_m(\mathbf x,\mathbf z;s)=\sum_{n\geq0}
\dfrac{\varphi_n(\mathbf x)\varphi_n(\mathbf z)}{\lambda_n+s}$
and expand $(\lambda_n+s)^{-1}$ for $n\geq1$.
\end{proof}

Let
$Q_m(s)=\int_\Omega f_m^{\mathrm{src}}(\mathbf z,s)\,\mathrm d\mathbf z$ and $Q_m(s)\longrightarrow Q_{m,0}<\infty$.
Then
\begin{equation}
\bm\Gamma_m(s)=\frac{Q_m(s)}{sV}\mathbf1+\bm r_m(s),
\qquad
(r_m^0)_j=\int_\Omega G_m^0(\mathbf x_j,\mathbf z)
f_m^{\mathrm{src}}(\mathbf z,0)\,\mathrm d\mathbf z,
\label{eq:app_forcing_decomposition}
\end{equation}
where $\bm r_m(s)\to\bm r_m^0$. Likewise,
$\bm{\mathcal G}_m(s)=\dfrac1{sV}\mathbf1\mathbf1^\top
+\bm{\mathcal G}_m^0+O(s)$,
with
\[
(\bm{\mathcal G}_m^0)_{jk}=
\begin{cases}
G_m^0(\mathbf x_j,\mathbf x_k),&j\ne k,\\[1mm]
R_m^0(\mathbf x_j,\mathbf x_j)
-\dfrac{\log\ell_j}{2\pi D_m},&j=k.
\end{cases}
\]

\begin{proposition}[Rank-one cancellation]
\label{prop:app_rank_one_cancellation}
Suppose $\bm\Psi_m(s)\to\bm\Psi_m^0$, where
\[
\bm\Psi_m^0=\operatorname{diag}\!\left(
\frac{D_m}{\kappa'_{1m}g_{1m}(0)\ell_1},\ldots,
\frac{D_m}{\kappa'_{Nm}g_{Nm}(0)\ell_N}\right).
\]
Define
\begin{align*}
\mathbf M_m(s)&=\mathbf I+2\pi\nu D_m\bm{\mathcal G}_m(s)
+\nu\bm\Psi_m(s), \\
\mathbf B_m(s)&=\mathbf M_m(s)
-\frac{2\pi\nu D_m}{sV}\mathbf1\mathbf1^\top, \\
\mathbf B_m^0&=\mathbf I+2\pi\nu D_m\bm{\mathcal G}_m^0
+\nu\bm\Psi_m^0. 
\end{align*}
Assume $\mathbf B_m(s)^{-1}$ is uniformly bounded near $s=0$ and
$c_m^0=\mathbf1^\top(\mathbf B_m^0)^{-1}\mathbf1\ne0$. Then
$\mathbf A_m(s)=\nu\mathbf M_m(s)^{-1}\bm\Gamma_m(s)$ has the limit
\[
\mathbf A_m^0=\frac{Q_{m,0}}{2\pi D_m}
\frac{\mathbf b_m^0}{c_m^0}+\nu\mathbf P_m^0\bm r_m^0,
\]
where
$\mathbf b_m^0=(\mathbf B_m^0)^{-1}\mathbf1$, $c_m^0=\mathbf1^\top\mathbf b_m^0$,
$\mathbf P_m^0=(\mathbf B_m^0)^{-1}
-\dfrac{\mathbf b_m^0\mathbf1^\top(\mathbf B_m^0)^{-1}}{c_m^0}$.
Furthermore,
$\mathbf1^\top\mathbf P_m^0=\mathbf0^\top$ and
$\mathbf1^\top\mathbf A_m^0=\frac{Q_{m,0}}{2\pi D_m}$.
\end{proposition}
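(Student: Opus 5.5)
The plan is to treat $\mathbf M_m(s)=\mathbf B_m(s)+\tau(s)\mathbf1\mathbf1^\top$, with $\tau(s)=2\pi\nu D_m/(sV)$, as a rank-one perturbation of a uniformly invertible matrix. We then apply the Sherman--Morrison formula and track the $1/s$ singularities, which should cancel between $\mathbf M_m^{-1}$ and $\bm\Gamma_m$. Write $\mathbf b(s)=\mathbf B_m(s)^{-1}\mathbf1$ and $c(s)=\mathbf1^\top\mathbf b(s)$. Sherman--Morrison gives
\[
\mathbf M_m^{-1}=\mathbf B_m^{-1}-\frac{\tau\,\mathbf b\mathbf1^\top\mathbf B_m^{-1}}{1+\tau c}.
\]
This is valid for small $s>0$ because $\tau\to\infty$ and $c(s)\to c_m^0\neq0$, so $1+\tau c\neq0$. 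The convergence $\mathbf B_m(s)^{-1}\to(\mathbf B_m^0)^{-1}$ follows from $\mathbf B_m(s)=\mathbf B_m^0+o(1)$ together with the assumed uniform bound; the expansion $\bm{\mathcal G}_m(s)=\tfrac1{sV}\mathbf1\mathbf1^\top+\bm{\mathcal G}_m^0+O(s)$ and $\bm\Psi_m\to\bm\Psi_m^0$ give the first of these.

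\textbf{Singular forcing term.} Apply the formula to $\frac{Q_m}{sV}\mathbf1$. We get
\[
\mathbf M_m^{-1}\mathbf1=\mathbf b-\frac{\tau c\,\mathbf b}{1+\tau c}=\frac{\mathbf b}{1+\tau c},
\]
so
\[
\nu\frac{Q_m}{sV}\mathbf M_m^{-1}\mathbf1=\frac{\nu Q_m\,\mathbf b}{sV+2\pi\nu D_m c}.
\]
This tends to $\frac{Q_{m,0}}{2\pi D_m}\mathbf b_m^0/c_m^0$ as $s\to0$. This step is exactly where the zero mode cancels, as anticipated in Remark~\ref{rem:pseudo}.

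\textbf{Regular forcing term.} For $\nu\mathbf M_m^{-1}\bm r_m$ we note that $\tau/(1+\tau c)\to1/c_m^0$. Hence $\mathbf M_m^{-1}\to(\mathbf B_m^0)^{-1}-\mathbf b_m^0\mathbf1^\top(\mathbf B_m^0)^{-1}/c_m^0=\mathbf P_m^0$. Together with $\bm r_m(s)\to\bm r_m^0$ from \eqref{eq:app_forcing_decomposition}, this gives the limit $\nu\mathbf P_m^0\bm r_m^0$. Adding the two contributions yields the stated formula for $\mathbf A_m^0$.

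\textbf{Identities.} Compute $\mathbf1^\top\mathbf P_m^0=\mathbf1^\top(\mathbf B_m^0)^{-1}-(\mathbf1^\top\mathbf b_m^0)\mathbf1^\top(\mathbf B_m^0)^{-1}/c_m^0=\mathbf0^\top$, since $\mathbf1^\top\mathbf b_m^0=c_m^0$. It then follows that $\mathbf1^\top\mathbf A_m^0=\frac{Q_{m,0}}{2\pi D_m}\,c_m^0/c_m^0=\frac{Q_{m,0}}{2\pi D_m}$. This is the mass statement: via \eqref{eq:app_reduced_flux} at $s=0$ it gives total productive uptake $Q_{m,0}$.

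\textbf{Main obstacle.} The delicate step is justifying the uniform limits rather than the algebra. We must check that the $o(1)$ remainders in $\mathbf B_m(s)$ and $\bm r_m(s)$ really are uniform, including the diagonal regular part, which uses the locally uniform clause of Proposition~\ref{prop:app_neumann_zero_mode}. We must also check that $Q_m(s)\to Q_{m,0}$ combines with $\mathbf b(s)\to\mathbf b_m^0$ without a hidden $1/s$ loss. For the latter, the key observation is that the denominator $sV+2\pi\nu D_mc(s)$ stays bounded away from zero, since $c(s)\to c_m^0\ne0$.
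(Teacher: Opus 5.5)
Your proposal is correct and follows the paper's proof essentially step for step. Both write $\mathbf M_m=\mathbf B_m+(\alpha_m/s)\mathbf1\mathbf1^\top$ (your $\tau$ is the paper's $\alpha_m/s$), apply Sherman--Morrison, cancel the $s^{-1}$ pole in the zero-mode forcing, and take the limit of the projected regular term. Your remarks on why $\mathbf B_m(s)^{-1}\to(\mathbf B_m^0)^{-1}$ and why $1+\tau c\neq0$ for small $s$ supply details that the paper leaves implicit.
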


\begin{proof}
Put $\alpha_m=2\pi\nu D_m/V$,
$\mathbf b_m(s)=\mathbf B_m(s)^{-1}\mathbf1$, and
$c_m(s)=\mathbf1^\top\mathbf b_m(s)$. Then
$\mathbf M_m(s)=\mathbf B_m(s)+\dfrac{\alpha_m}{s}\mathbf1\mathbf1^\top$.
Sherman--Morrison gives
$\mathbf M_m(s)^{-1}=\mathbf B_m(s)^{-1}
-\dfrac{\mathbf b_m(s)\mathbf1^\top\mathbf B_m(s)^{-1}}
{s/\alpha_m+c_m(s)}$.
Substitution of \eqref{eq:app_forcing_decomposition} yields
$\mathbf M_m^{-1}\bm\Gamma_m =\dfrac{Q_m(s)}{V\alpha_m}
\dfrac{\mathbf b_m(s)}{s/\alpha_m+c_m(s)}
+\mathbf B_m(s)^{-1}\bm r_m(s) -\dfrac{\mathbf b_m(s)\mathbf1^\top
\mathbf B_m(s)^{-1}\bm r_m(s)}{s/\alpha_m+c_m(s)}$.
The $s^{-1}$ poles cancel. Since $V\alpha_m=2\pi\nu D_m$, multiply by
$\nu$ and let $s\downarrow0$. The sum identities follow directly from the
definition of $\mathbf P_m^0$.
\end{proof}

\begin{corollary}[Conservative normalization]
\label{cor:app_conservative_normalization}
If $\bar\gamma_{jm}>0$, then
\[
\bm\pi_m=\lim_{s\downarrow0}\widetilde{\bm{\mathcal J}}_m(s)
=2\pi D_m\mathbf A_m^0,
\qquad
\mathbf1^\top\bm\pi_m=Q_{m,0}.
\]
For a unit release, $\sum_j\pi_{jm}=1$.
\end{corollary}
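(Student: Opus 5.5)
The plan is to combine the reduced flux formula \eqref{eq:app_reduced_flux} with the zero-mode limit of Proposition~\ref{prop:app_rank_one_cancellation}. For $s>0$ the reduced productive flux is
\[
\widetilde{\mathcal J}_{jm}(s)=\frac{2\pi D_m\bar\gamma_{jm}}{s+\bar\gamma_{jm}}A_{jm}(s).
\]
Because $\bar\gamma_{jm}>0$, the prefactor $\bar\gamma_{jm}/(s+\bar\gamma_{jm})$ is continuous at $s=0$ and equals $1$ there; this is exactly where the hypothesis $\bar\gamma_{jm}>0$ enters. If $\bar\gamma_{jm}=0$, the prefactor vanishes identically and no productive uptake occurs.

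Next I take $s\downarrow0$. Under the hypotheses of Proposition~\ref{prop:app_rank_one_cancellation} (uniform boundedness of $\mathbf B_m(s)^{-1}$ and $c_m^0\ne0$), we have $\mathbf A_m(s)\to\mathbf A_m^0$. The limit of the product is then the product of the limits, which gives $\bm\pi_m=2\pi D_m\mathbf A_m^0$. I would add one remark here: $\bm\Psi_m(s)\to\bm\Psi_m^0$ holds because $g_{jm}(s)\to g_{jm}(0)=1-\sigma_{jm}>0$, and this positivity again uses $\bar\gamma_{jm}>0$. So the assumption needed to invoke the proposition is automatically satisfied.

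Summing and using $\mathbf1^\top\mathbf A_m^0=Q_{m,0}/(2\pi D_m)$ from the same proposition gives $\mathbf1^\top\bm\pi_m=Q_{m,0}$. The identity $\mathbf1^\top\mathbf P_m^0=\mathbf0^\top$ matters here: the $\bm r_m^0$ term drops out of the sum entirely. For a unit release, $f^{\mathrm{src}}_m=\delta_{\mathbf x_0}$, so $Q_m(s)=1$ and $\sum_j\pi_{jm}=1$.

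The only delicate point is justifying that the $s\downarrow0$ limit of $\mathbf A_m(s)$ exists even though $\bm\Gamma_m(s)$ and $\mathbf M_m(s)$ both carry $s^{-1}$ poles. That is precisely what the Sherman--Morrison cancellation in Proposition~\ref{prop:app_rank_one_cancellation} provides. One also needs $s/\alpha_m+c_m(s)\to c_m^0\neq0$, which follows from continuity of $\mathbf B_m(s)^{-1}$, so I would simply cite that proposition rather than redo the argument.
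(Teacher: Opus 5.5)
Your proposal is correct and follows the same route the paper implicitly takes: let $s\downarrow0$ in the reduced flux formula \eqref{eq:app_reduced_flux}, where $\bar\gamma_{jm}>0$ makes the prefactor tend to $2\pi D_m$, then invoke the limit $\mathbf A_m^0$ and the identity $\mathbf 1^\top\mathbf A_m^0=Q_{m,0}/(2\pi D_m)$ from Proposition~\ref{prop:app_rank_one_cancellation}. Your remark that $\bar\gamma_{jm}>0$ also gives $g_{jm}(0)>0$, and hence a finite $\bm\Psi_m^0$, is a useful clarification; note, however, that the inverse bound on $\mathbf B_m(s)^{-1}$ and the condition $c_m^0\neq0$ are inherited hypotheses, not automatic consequences of it.
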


\begin{proposition}[Uniform-source identity]
\label{prop:app_uniform_source}
For $s+\gamma_m>0$,
\[
\int_\Omega G_m(\mathbf x,\mathbf z;s)\,\mathrm d\mathbf z
=\frac1{s+\gamma_m},
\]
independently of $\mathbf x$.
\end{proposition}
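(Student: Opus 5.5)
The plan is to test the Green's-function equation against the constant function $1$. Fix $\mathbf x\in\Omega$ and define $w(\mathbf x)=\int_\Omega G_m(\mathbf x,\mathbf z;s)\dd\mathbf z$. By the symmetry $G_m(\mathbf x,\mathbf z;s)=G_m(\mathbf z,\mathbf x;s)$, $w$ is the solution of the reflecting resolvent problem with unit forcing:
\[
D_m\Delta w-(s+\gamma_m)w=-1\ \text{in }\Omega,\qquad \partial_{\mathbf n}w=0\ \text{on }\partial\Omega .
\]
The symmetry itself holds because the Neumann operator $D_m\Delta-(s+\gamma_m)$ is self-adjoint. The constant $w\equiv(s+\gamma_m)^{-1}$ solves this problem: its Laplacian and normal derivative both vanish. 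Since $s+\gamma_m>0$, the Lax--Milgram coercivity of the Neumann form gives uniqueness, and the identity follows.

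For a more concrete route that avoids the distributional step, I would use the eigenfunction expansion already written in Proposition~\ref{prop:app_neumann_zero_mode}:
\[
G_m=\sum_{n\ge0}\frac{\varphi_n(\mathbf x)\varphi_n(\mathbf z)}{\lambda_n+s+\gamma_m}.
\]
Integrating in $\mathbf z$ and using orthogonality against $\varphi_0=V^{-1/2}$ gives $\int_\Omega\varphi_n=V^{1/2}\delta_{n0}$. Only the $n=0$ term survives, and it equals
\[
\frac{V^{-1/2}\cdot V^{1/2}}{s+\gamma_m}=\frac{1}{s+\gamma_m}.
\]

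The main obstacle is justifying the interchange of the $\mathbf z$-integral with the series. The eigenseries converges only in $L^2$ in $\mathbf z$, and the diagonal singularity is logarithmic. I would handle this by noting that $G_m(\mathbf x,\cdot\,;s)\in L^2(\Omega)$ for each fixed $\mathbf x$, because the logarithmic singularity is square integrable in two dimensions. Its $L^2$ expansion coefficients are $\varphi_n(\mathbf x)/(\lambda_n+s+\gamma_m)$. The integral $\int_\Omega G_m$ is then the $L^2$ inner product of $G_m(\mathbf x,\cdot\,;s)$ with $1=V^{1/2}\varphi_0$, and Parseval yields the $n=0$ coefficient directly, with no termwise interchange needed.

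Independence of $\mathbf x$ is then immediate. As a consistency check, the statement says exactly that the reflecting chamber with volumetric clearance loses mass at rate $s+\gamma_m$ uniformly; this is also the mechanism by which the $1/(sV)$ mode appears in Proposition~\ref{prop:app_neumann_zero_mode} when $\gamma_m=0$.
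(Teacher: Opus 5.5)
Your proposal is correct. Your first argument rests on the same fact as the paper's one-line proof, but runs it in the dual direction. The paper uses symmetry to read the Green equation in the $\mathbf z$ variable, integrates it over $\Omega$, and lets the reflecting condition remove $\int_\Omega D_m\Delta_{\mathbf z}G_m\,\mathrm d\mathbf z$. That leaves $-(s+\gamma_m)\int_\Omega G_m\,\mathrm d\mathbf z=-1$ at once; this is literally testing against the constant $1$, and needs no separate uniqueness step.

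You instead identify $w$ as the Neumann resolvent applied to the constant function $1$, exhibit the constant solution $(s+\gamma_m)^{-1}$, and conclude by Lax--Milgram uniqueness. This costs one extra standard ingredient. It also does not need symmetry at all: $w(\mathbf x)=\int_\Omega G_m(\mathbf x,\mathbf z;s)\,\mathrm d\mathbf z$ satisfies the $\mathbf x$-equation and the Neumann condition directly by superposition. Symmetry is what the paper needs, because it integrates over the second argument.

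Your spectral route is a genuinely different presentation. It makes explicit that the identity says the constants are the $\lambda_0=0$ Neumann mode, the same mode that produces the $1/(sV)$ term in Proposition~\ref{prop:app_neumann_zero_mode}. It is only \emph{more concrete}, however, when $G_m$ is defined by its eigenseries, as for the square in \eqref{eq:series}. In that case you also need square-summability of $\varphi_n(\mathbf x)/(\lambda_n+s+\gamma_m)$ at fixed $\mathbf x$, which holds in $d\le3$. If instead $G_m$ is defined by the boundary-value problem, identifying the $L^2$ coefficients of $G_m(\mathbf x,\cdot\,;s)$ as $\varphi_n(\mathbf x)/(\lambda_n+s+\gamma_m)$ requires the same representation-plus-uniqueness argument as your first route, with forcing $\varphi_n$ in place of $1$. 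The $n=0$ coefficient is then just your first argument again.
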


\begin{proof}
By symmetry, apply the Green equation in $\mathbf z$ and integrate over
$\Omega$; the Neumann boundary term vanishes.
\end{proof}

\begin{corollary}[Distributed dosing]
\label{cor:app_uniform_dosing}
If $f_m^{\mathrm{src}}(\mathbf z,s)=Q_m(s)/V$, then
$\Gamma_m(\mathbf x_j,s)=\dfrac{Q_m(s)}{V(s+\gamma_m)}$ for every $j$.
In the conservative limit $\bm r_m^0=\mathbf0$, so
\[
\bm\pi_m^{\mathrm{unif}}=Q_{m,0}
\frac{(\mathbf B_m^0)^{-1}\mathbf1}
{\mathbf1^\top(\mathbf B_m^0)^{-1}\mathbf1}.
\]
Thus uniform dosing removes source-position heterogeneity in the reduced model.
\end{corollary}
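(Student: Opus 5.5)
The statement to prove is Corollary~\ref{cor:app_uniform_dosing}. The plan has two parts: first get the forcing identity from Proposition~\ref{prop:app_uniform_source}, then feed it into the rank-one cancellation of Proposition~\ref{prop:app_rank_one_cancellation} and Corollary~\ref{cor:app_conservative_normalization}.

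\textbf{Forcing identity.} With $f^{\mathrm{src}}_m(\mathbf z,s)=Q_m(s)/V$ spatially constant, the constant comes out of the integral:
\[
\Gamma_m(\mathbf x_j,s)=\frac{Q_m(s)}{V}\int_\Omega G_m(\mathbf x_j,\mathbf z;s)\,\mathrm d\mathbf z .
\]
Proposition~\ref{prop:app_uniform_source} evaluates the integral as $(s+\gamma_m)^{-1}$, which does not depend on $\mathbf x_j$. This gives the first claim for every $s$ with $s+\gamma_m>0$.

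\textbf{Conservative remainder.} Now set $\gamma_m=0$. The first step gives $\bm\Gamma_m(s)=\frac{Q_m(s)}{sV}\mathbf 1$ exactly, so the remainder in the decomposition \eqref{eq:app_forcing_decomposition} satisfies $\bm r_m(s)\equiv\mathbf 0$. It follows that the limit $\bm r_m^0$ vanishes.

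As a consistency check, compute it directly. For a constant source, $(r^0_m)_j=\frac{Q_{m,0}}{V}\int_\Omega G^0_m(\mathbf x_j,\mathbf z)\,\mathrm d\mathbf z$. By symmetry of $G^0_m$ and the zero-mean condition in \eqref{eq:app_pseudogreen_problem}, this integral is $0$. The one point needing care is that \eqref{eq:app_pseudogreen_problem} states zero mean in $\mathbf x$, while here we integrate over $\mathbf z$. Symmetry $G^0_m(\mathbf x,\mathbf z)=G^0_m(\mathbf z,\mathbf x)$ follows immediately from the eigenfunction series in Proposition~\ref{prop:app_neumann_zero_mode}, so the two are the same.

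\textbf{Limit and normalization.} Substitute $\bm r^0_m=\mathbf 0$ into the limit of Proposition~\ref{prop:app_rank_one_cancellation}, under its standing hypotheses: $\mathbf B_m(s)^{-1}$ bounded near $s=0$ and $c^0_m\neq0$. This gives
\[
\mathbf A^0_m=\frac{Q_{m,0}}{2\pi D_m}\,\frac{\mathbf b^0_m}{c^0_m}.
\]
Corollary~\ref{cor:app_conservative_normalization} then gives
\[
\bm\pi_m=2\pi D_m\mathbf A^0_m=Q_{m,0}\,\frac{(\mathbf B^0_m)^{-1}\mathbf 1}{\mathbf 1^\top(\mathbf B^0_m)^{-1}\mathbf 1},
\]
which is the displayed formula.

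\textbf{Interpretation.} The final sentence of the statement follows because $\mathbf B^0_m$ is built only from $\bm{\mathcal G}^0_m$ and $\bm\Psi^0_m$. Both depend on the target positions and kinetics, and neither depends on any source location. No step is a real obstacle; the only subtleties are the symmetry point above and carrying over the invertibility hypotheses of Proposition~\ref{prop:app_rank_one_cancellation}.
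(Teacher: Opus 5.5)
Your proposal is correct and follows the same route the paper implicitly takes. Proposition~\ref{prop:app_uniform_source} makes the forcing constant, so $\bm r_m(s)\equiv\mathbf 0$ in \eqref{eq:app_forcing_decomposition}, and Proposition~\ref{prop:app_rank_one_cancellation} with Corollary~\ref{cor:app_conservative_normalization} then gives the normalized formula; your direct check via the symmetry and zero mean of $G^0_m$ is a valid consistency check, and you correctly carry over the standing hypotheses ($\mathbf B_m(s)^{-1}$ bounded, $c^0_m\neq0$, and implicitly $Q_m(s)\to Q_{m,0}<\infty$ and $\bar\gamma_{jm}>0$).
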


\section{Renewal resolvents and moment formulae}
\label{app:renewal_resolvents}

Fix $m$ and suppress it. Let
$\widetilde{\mathbf J}(s)=(\widetilde J_1(s),\ldots,
\widetilde J_N(s))^\top$ be the first-adsorption flux vector, and set
\[
\bm\Sigma=\operatorname{diag}(\sigma_1,\ldots,\sigma_N),\qquad
\bm W=\mathbf I-\bm\Sigma,\qquad
\bm\Phi(s)=\operatorname{diag}(\widetilde\phi_1(s),\ldots,
\widetilde\phi_N(s)).
\]
Assume $0\leq\sigma_j\leq\sigma_*<1$, $\widetilde\phi_j(0)=1$, and let
$P_{jk}(s)$ be the Laplace transform of the nonnegative re-adsorption density
into target $j$ following desorption from target $k$. Thus, for
$\operatorname{Re}s\geq0$,
\[
|P_{jk}(s)|\leq P_{jk}(\operatorname{Re}s),\qquad
|\widetilde\phi_j(s)|\leq\widetilde\phi_j(\operatorname{Re}s)\leq1,
\qquad \sum_jP_{jk}(0)\leq1.
\]

\subsection{Continued search}

Define
$\mathbf K(s)=\mathbf P(s)\bm\Sigma\bm\Phi(s)$ and
$\mathbf R(s)=[\mathbf I-\mathbf K(s)]^{-1}$.

\begin{proposition}[Resolvent convergence]
\label{prop:app_resolvent_convergence}
For $\operatorname{Re}s\geq0$,
$\rho(\mathbf K(s))\leq\|\mathbf K(s)\|_1\leq\sigma_*<1$.
Consequently, $\mathbf R(s)=\sum_{n=0}^\infty\mathbf K(s)^n$ with absolute convergence in the induced $1$-norm.
\end{proposition}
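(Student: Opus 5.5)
The plan is to bound the induced $1$-norm of $\mathbf K(s)=\mathbf P(s)\bm\Sigma\bm\Phi(s)$ column by column and then invoke the Neumann series. The induced $1$-norm is the maximum absolute column sum, so
\[
\|\mathbf K(s)\|_1=\max_k\sum_j|P_{jk}(s)|\,\sigma_k\,|\widetilde\phi_k(s)|,
\]
because $\bm\Sigma\bm\Phi(s)$ is diagonal and simply multiplies column $k$ of $\mathbf P(s)$ by $\sigma_k\widetilde\phi_k(s)$.

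For $\operatorname{Re}s\ge0$, the stated domination $|P_{jk}(s)|\le P_{jk}(\operatorname{Re}s)$ applies entrywise. This holds because $P_{jk}$ is the Laplace transform of a nonnegative density, so $|e^{-st}|=e^{-\operatorname{Re}s\,t}$. Monotonicity in the real variable then gives $P_{jk}(\operatorname{Re}s)\le P_{jk}(0)$. Hence the column sum of $|\mathbf P(s)|$ is at most $\sum_jP_{jk}(0)\le1$.

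Similarly $|\widetilde\phi_k(s)|\le\widetilde\phi_k(\operatorname{Re}s)\le1$. Each column sum is therefore bounded by $\sigma_k\le\sigma_*$, which gives $\|\mathbf K(s)\|_1\le\sigma_*<1$. The spectral-radius inequality $\rho(\mathbf K)\le\|\mathbf K\|_1$ is the standard fact that the spectral radius is bounded by any induced operator norm: if $\mathbf K\mathbf v=\lambda\mathbf v$ with $\mathbf v\neq\mathbf 0$, then $|\lambda|\,\|\mathbf v\|_1\le\|\mathbf K\|_1\|\mathbf v\|_1$.

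For the series, submultiplicativity gives $\|\mathbf K(s)^n\|_1\le\sigma_*^n$, so $\sum_n\mathbf K(s)^n$ converges absolutely in the complete space of $N\times N$ matrices. Its sum $\mathbf S$ satisfies
\[
(\mathbf I-\mathbf K)\mathbf S=\lim_{n\to\infty}\bigl(\mathbf I-\mathbf K^{n+1}\bigr)=\mathbf I,
\]
and likewise $\mathbf S(\mathbf I-\mathbf K)=\mathbf I$. Hence $\mathbf I-\mathbf K(s)$ is invertible with $\mathbf R(s)=\mathbf S$, and $\|\mathbf R(s)\|_1\le(1-\sigma_*)^{-1}$.

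The only step needing care is the complex-$s$ domination for $\mathbf P$ and $\bm\Phi$, i.e.\ that the transforms are of nonnegative (sub-)probability measures. This is given as a hypothesis, so it just needs to be cited. The bound is uniform on the closed half-plane, including $s=0$, where $\sum_jP_{jk}(0)=1$ may hold with equality in the conservative case.
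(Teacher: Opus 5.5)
Your proof is correct and follows the paper's argument: bound each absolute column sum of $\mathbf K(s)=\mathbf P(s)\bm\Sigma\bm\Phi(s)$ by $\sigma_k\widetilde\phi_k(\operatorname{Re}s)\sum_jP_{jk}(0)\le\sigma_*$, then conclude via the Neumann series. You also make explicit three steps that the paper's one-line proof leaves implicit: $P_{jk}(\operatorname{Re}s)\le P_{jk}(0)$ from nonnegativity of the density, $\rho(\mathbf K)\le\|\mathbf K\|_1$, and that the series sum is a two-sided inverse of $\mathbf I-\mathbf K(s)$.
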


\begin{proof}
For each column $k$,
$\sum_j|K_{jk}(s)|
\leq\sigma_k\widetilde\phi_k(\operatorname{Re}s)
\sum_jP_{jk}(0)\leq\sigma_*$.
\end{proof}

After exactly $n$ failed cycles the adsorption transform is
$\mathbf K^n\widetilde{\mathbf J}$. Hence
$\widetilde{\bm{\mathcal J}}^{\mathrm{cont}}(s)
=\bm W\bm\Phi(s)\mathbf R(s)\widetilde{\mathbf J}(s)$.
If search is conservative,
$\mathbf1^\top\mathbf P(0)=\mathbf1^\top$ and
$\mathbf1^\top\widetilde{\mathbf J}(0)=1$. Since
$\mathbf1^\top[\mathbf I-\mathbf P(0)\bm\Sigma]
=\mathbf1^\top(\mathbf I-\bm\Sigma)=\mathbf1^\top\bm W$,
we obtain
$\mathbf1^\top\widetilde{\bm{\mathcal J}}^{\mathrm{cont}}(0)=1$.

\subsection{Continued-search moments}

Let
$\mathbf L(s)=\bm W\bm\Phi(s)\mathbf R(s)\widetilde{\mathbf J}(s)$.
Assuming the displayed derivatives exist,
\begin{align*}
\mathbf R'&=\mathbf R\mathbf K'\mathbf R,  & \mathbf R''&=\mathbf R\mathbf K''\mathbf R
+2\mathbf R\mathbf K'\mathbf R\mathbf K'\mathbf R,\\[4pt]
\mathbf K'&=\mathbf P'\bm\Sigma\bm\Phi
+\mathbf P\bm\Sigma\bm\Phi',  &
\mathbf K''&=\mathbf P''\bm\Sigma\bm\Phi
+2\mathbf P'\bm\Sigma\bm\Phi'
+\mathbf P\bm\Sigma\bm\Phi''.
\end{align*}
Therefore
\begin{align}
\mathbf L'
={}&\bm W\left[
\bm\Phi'\mathbf R\widetilde{\mathbf J}
+\bm\Phi\mathbf R\mathbf K'\mathbf R\widetilde{\mathbf J}
+\bm\Phi\mathbf R\widetilde{\mathbf J}'\right],
\label{eq:app_L_first}\\
\mathbf L''
={}&\bm W\left[
\bm\Phi''\mathbf R\widetilde{\mathbf J}
+2\bm\Phi'\mathbf R'\widetilde{\mathbf J}
+2\bm\Phi'\mathbf R\widetilde{\mathbf J}'
+\bm\Phi\mathbf R''\widetilde{\mathbf J}\right.\left.
+2\bm\Phi\mathbf R'\widetilde{\mathbf J}'
+\bm\Phi\mathbf R\widetilde{\mathbf J}''\right].
\label{eq:app_L_second}
\end{align}
With $\pi_j=L_j(0)$ and
$p_{\mathrm{int}}=\mathbf1^\top\mathbf L(0)$,
\begin{align*}
\mathbb E[T_j\mid j]&=-\frac{L_j'(0)}{\pi_j},
&\mathbb E[T\mid\mathrm{int}]&=-\frac{\mathbf1^\top\mathbf L'(0)}
{p_{\mathrm{int}}},\\
\mathbb E[T_j^2\mid j]&=\frac{L_j''(0)}{\pi_j},
&\operatorname{Var}(T_j\mid j)&=\frac{L_j''(0)}{\pi_j}
-\left(\frac{L_j'(0)}{\pi_j}\right)^2.
\end{align*}

\subsection{Dependence on partial remixing}

Let
$\mathbf P^{(\eta)}(s)=(1-\eta)\mathbf P^{\mathrm{reset}}(s)
+\eta\mathbf P^{\mathrm{cont}}(s)$ and
$\Delta\mathbf P=\mathbf P^{\mathrm{cont}}-\mathbf P^{\mathrm{reset}}$.
Then
$\partial_\eta\mathbf K^{(\eta)}=\Delta\mathbf P\bm\Sigma\bm\Phi$,
$\partial_\eta\mathbf R^{(\eta)}=\mathbf R^{(\eta)}(\partial_\eta\mathbf K^{(\eta)})
\mathbf R^{(\eta)}$, and
$\partial_\eta\mathbf L^{(\eta)}
=\bm W\bm\Phi\mathbf R^{(\eta)}\Delta\mathbf P\bm\Sigma\bm\Phi
\mathbf R^{(\eta)}\widetilde{\mathbf J}$.
Differentiating \eqref{eq:app_L_first} also gives
\begin{equation*}
\partial_\eta\mathbf L'
=\bm W\bigl[\bm\Phi'\mathbf R_\eta\widetilde{\mathbf J}
+\bm\Phi\mathbf R_\eta\mathbf K'\mathbf R\widetilde{\mathbf J}
+\bm\Phi\mathbf R\mathbf K_\eta'\mathbf R\widetilde{\mathbf J}
+\bm\Phi\mathbf R\mathbf K'\mathbf R_\eta\widetilde{\mathbf J}
+\bm\Phi\mathbf R_\eta\widetilde{\mathbf J}'\bigr],
\end{equation*}
where
$\mathbf K_\eta=\Delta\mathbf P\bm\Sigma\bm\Phi$,
$\mathbf K_\eta'=\Delta\mathbf P'\bm\Sigma\bm\Phi
+\Delta\mathbf P\bm\Sigma\bm\Phi'$
and $\mathbf R_\eta=\mathbf R\mathbf K_\eta\mathbf R$.
Thus
\begin{equation}
\partial_\eta\mathbb E[T_j\mid j]
=-\frac{(\partial_\eta L_j'(0))L_j(0)
-L_j'(0)\partial_\eta L_j(0)}{L_j(0)^2}.
\label{eq:app_eta_mean}
\end{equation}

\subsection{Complete resetting}

Under complete resetting,
$\mathbf P^{\mathrm{reset}}(s)=\widetilde{\mathbf J}(s)\mathbf1^\top$.
Define
$\mathfrak S_j(s)=(1-\sigma_j)\widetilde\phi_j(s)\widetilde J_j(s)$,
$\mathfrak S(s)=\sum_j\mathfrak S_j(s)$ and
$\mathfrak F(s)=\sum_j\sigma_j\widetilde\phi_j(s)\widetilde J_j(s)$.
Summing over failed cycles gives
\[
\widetilde{\mathcal J}^{\mathrm{reset}}_j(s)
=\frac{\mathfrak S_j(s)}{1-\mathfrak F(s)},\qquad
\widetilde{\mathcal F}^{\mathrm{reset}}(s)
=\frac{\mathfrak S(s)}{1-\mathfrak F(s)}.
\]
Hence
\[
\pi_j^{\mathrm{reset}}=\frac{\mathfrak S_j(0)}{1-\mathfrak F(0)},
\qquad
p_{\mathrm{int}}=\frac{\mathfrak S(0)}{1-\mathfrak F(0)}\leq1.
\]

\subsection{Conditional moments}

Normalize the total transform by its mass at zero. Logarithmic
differentiation gives
\begin{equation}
\mathbb E[T\mid\mathrm{int}]
=-\frac{\mathfrak S'(0)}{\mathfrak S(0)}
-\frac{\mathfrak F'(0)}{1-\mathfrak F(0)}.
\label{eq:app_lossy_mean}
\end{equation}
Moreover,
\[
\mathbb E[T^2\mid\mathrm{int}]
={}\frac{\mathfrak S''(0)}{\mathfrak S(0)}
+\frac{2\mathfrak S'(0)\mathfrak F'(0)}
{\mathfrak S(0)[1-\mathfrak F(0)]}
+\frac{\mathfrak F''(0)}{1-\mathfrak F(0)}
+\frac{2[\mathfrak F'(0)]^2}{[1-\mathfrak F(0)]^2}.
\]
Target-specific conditioning replaces $\mathfrak S$ in the first term of
\eqref{eq:app_lossy_mean} by $\mathfrak S_j$.

\subsection{Conservative specialization}

If $\sum_j\widetilde J_j(0)=1$, write
$\overline\pi_j=\widetilde J_j(0)$ and
$p=\sum_j(1-\sigma_j)\overline\pi_j=1-\mathfrak F(0)$.
Then
\[
\pi_j^{\mathrm{reset}}=
\frac{(1-\sigma_j)\overline\pi_j}
{\sum_k(1-\sigma_k)\overline\pi_k},\qquad
\sum_j\pi_j^{\mathrm{reset}}=1.
\]
Let $\mathfrak K_j=\widetilde\phi_j\widetilde J_j$ and
\[
\mathfrak m_j^{(1)}=-\mathfrak K_j'(0),\quad
\mathfrak m_j^{(2)}=\mathfrak K_j''(0),\quad
A_1=\sum_j\mathfrak m_j^{(1)},\quad
B_1=\sum_j\sigma_j\mathfrak m_j^{(1)},\quad
A_2=\sum_j\mathfrak m_j^{(2)}.
\]
Then
\[
\mathbb E[T]=\frac{A_1}{p},\qquad
\mathbb E[T^2]=\frac{A_2}{p}+\frac{2A_1B_1}{p^2}.
\]
If $T_j^{(1)}=-\widetilde J_j'(0)$,
$T_j^{(2)}=\widetilde J_j''(0)$,
$\langle\tau\rangle_j=-\widetilde\phi_j'(0)$, and
$\langle\tau^2\rangle_j=\widetilde\phi_j''(0)$, then
\begin{equation*}
\mathfrak m_j^{(1)}=T_j^{(1)}+\overline\pi_j\langle\tau\rangle_j,\quad
\mathfrak m_j^{(2)}=T_j^{(2)}+2T_j^{(1)}\langle\tau\rangle_j
+\overline\pi_j\langle\tau^2\rangle_j.
\end{equation*}

\section{Large-batch acceptance limit}
\label{app:large_batch}

Let $H=(Z_{\mathrm{line}},Z_{\mathrm{batch}})$ collect effects shared within a
batch. Conditional on $H$, assume that $(M_j,D_j)_{j\geq1}$ are independent
and identically distributed, where $0\leq M_j\leq1$. Define
\begin{align}
X_j&=\mathbf1\!\left\{M_j\geq\upsilon_{\mathrm{mt}}^{\min},\quad
D_j\leq\upsilon_{\mathrm{ds}}^{\max}\right\},\nonumber\\
\widehat p_N&=\frac1N\sum_{j=1}^NX_j,qquad
\widehat\mu_N=\frac1N\sum_{j=1}^NM_j,
\label{eq:app_empirical_acceptance}
\end{align}
and
\[
\mathcal A_N=\left\{\widehat p_N\geq r_{\min},\quad
\widehat\mu_N\geq\overline\upsilon_{\mathrm{mt}}^{\min}\right\}.
\]
Write
\[
p(H)=\mathbb E[X_1\mid H],\qquad
\mu(H)=\mathbb E[M_1\mid H],
\]
and
\[
\mathcal A_\infty=\left\{p(H)\geq r_{\min},\quad
\mu(H)\geq\overline\upsilon_{\mathrm{mt}}^{\min}\right\}.
\]

\begin{proposition}[Large-batch acceptance probability]
\label{prop:app_large_batch}
If
\begin{equation}
\mathbb P_H\!\left\{p(H)=r_{\min}\ \text{or}\
\mu(H)=\overline\upsilon_{\mathrm{mt}}^{\min}\right\}=0,
\label{eq:app_no_boundary_mass}
\end{equation}
then
\begin{equation}
P_{\mathrm{accept}}(N)=\mathbb P(\mathcal A_N)
\longrightarrow
\mathbb P_H\!\left\{p(H)\geq r_{\min},\quad
\mu(H)\geq\overline\upsilon_{\mathrm{mt}}^{\min}\right\}.
\label{eq:app_large_batch_limit}
\end{equation}
\end{proposition}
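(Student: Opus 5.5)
The plan is to condition on the shared effects $H$ and apply the strong law of large numbers within each batch, then pass to the limit by dominated convergence. Fix a realization $h$ of $H$. By hypothesis, conditional on $H=h$ the pairs $(M_j,D_j)$ are i.i.d. Hence $X_j\in\{0,1\}$ and $M_j\in[0,1]$ are i.i.d. bounded variables with means $p(h)$ and $\mu(h)$. The conditional strong law then gives
\[
\widehat p_N\to p(h),\qquad \widehat\mu_N\to\mu(h)
\]
almost surely under $\mathbb P(\cdot\mid H=h)$. Formally, I will work with a regular conditional distribution given $H$; this exists because $H$ takes values in $\mathbb R^2$. Alternatively, I can avoid conditioning technicalities by noting that $\mathbb E[(\widehat p_N-p(H))^2\mid H]\le 1/(4N)$. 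The analogous bound for $\widehat\mu_N$ follows from Chebyshev's inequality, and this yields convergence in probability unconditionally, which is all that is needed.

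Next, I will show that $\mathbf 1_{\mathcal A_N}\to\mathbf 1_{\mathcal A_\infty}$ almost surely off the boundary event
\[
E=\{p(H)=r_{\min}\}\cup\{\mu(H)=\overline\upsilon^{\min}_{\mathrm{mt}}\}.
\]
On $E^c$, if $p(H)>r_{\min}$ and $\mu(H)>\overline\upsilon^{\min}_{\mathrm{mt}}$, then both strict inequalities are eventually inherited by $\widehat p_N$ and $\widehat\mu_N$. Thus $\mathcal A_N$ holds for all large $N$, and so does $\mathcal A_\infty$. If instead either inequality fails strictly, say $p(H)<r_{\min}$, then eventually $\widehat p_N<r_{\min}$. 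In that case both $\mathcal A_N$ and $\mathcal A_\infty$ fail. Because \eqref{eq:app_no_boundary_mass} gives $\mathbb P(E)=0$, the indicator convergence holds almost surely.

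Finally, all indicators are bounded by $1$, so dominated convergence gives
\[
\mathbb P(\mathcal A_N)=\mathbb E\mathbf 1_{\mathcal A_N}\to\mathbb E\mathbf 1_{\mathcal A_\infty},
\]
which is \eqref{eq:app_large_batch_limit}. If I only have convergence in probability, the same conclusion follows. The indicator of a closed orthant is continuous at every point of $E^c$, so the continuous mapping theorem applies, and bounded convergence in probability then implies convergence of expectations.

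The main obstacle is justifying the conditional law of large numbers rigorously, that is, turning "i.i.d. given $H$" into almost-sure convergence jointly in $(H,\text{organoids})$. I would first try the variance route, since it sidesteps disintegration entirely: the conditional Chebyshev bound integrates directly to $\mathbb P(|\widehat p_N-p(H)|>\delta)\le 1/(4N\delta^2)$. I will also remark that without \eqref{eq:app_no_boundary_mass} the limit can fail, because boundary atoms of $p(H)$ produce non-convergent fluctuations of $\widehat p_N$ around $r_{\min}$.
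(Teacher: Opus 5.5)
Your proposal is correct and follows the paper's own argument: conditional strong laws for $\widehat p_N$ and $\widehat\mu_N$, then $\mathbf 1_{\mathcal A_N}\to\mathbf 1_{\mathcal A_\infty}$ off the null boundary event, then dominated convergence; your Chebyshev/continuous-mapping variant is a convergence-in-probability version of the same argument. One small slip: a regular conditional distribution given $H$ exists because the organoid sequence $(M_j,D_j)_{j\ge1}$ takes values in a Polish space, not because $H$ is $\mathbb R^2$-valued.
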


\begin{proof}
Conditional strong laws give
$\widehat p_N\to p(H)$ and $\widehat\mu_N\to\mu(H)$ almost surely. Away
from the null boundary event in \eqref{eq:app_no_boundary_mass},
$\mathbf1_{\mathcal A_N}\to\mathbf1_{\mathcal A_\infty}$. Dominated
convergence completes the proof.
\end{proof}

\subsection{Finite-batch bound}

For $\delta_p,\delta_M>0$, define
\begin{equation}
\mathcal B_{\delta_p,\delta_M}=
\left\{|p(H)-r_{\min}|\leq\delta_p\ \text{or}\
|\mu(H)-\overline\upsilon_{\mathrm{mt}}^{\min}|\leq\delta_M\right\}.
\label{eq:app_margin_event}
\end{equation}

\begin{proposition}[Finite-batch error]
\label{prop:app_finite_batch}
If $M_j\in[M_-,M_+]$ almost surely, then
\begin{align}
|P_{\mathrm{accept}}(N)-P_{\mathrm{accept}}(\infty)|
\leq{}&\mathbb P_H(\mathcal B_{\delta_p,\delta_M})
+2e^{-2N\delta_p^2}\nonumber\\
&+2\exp\!\left[-\frac{2N\delta_M^2}{(M_+-M_-)^2}\right].
\label{eq:app_finite_batch_bound}
\end{align}
For $M_j\in[0,1]$ and $\delta_p=\delta_M=\delta$,
\begin{equation}
|P_{\mathrm{accept}}(N)-P_{\mathrm{accept}}(\infty)|
\leq\mathbb P_H(\mathcal B_{\delta,\delta})+4e^{-2N\delta^2}.
\label{eq:app_finite_batch_unit}
\end{equation}
\end{proposition}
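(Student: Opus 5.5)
We will prove the finite-batch bound \eqref{eq:app_finite_batch_bound} by conditioning on the shared effects $H$ and splitting on the margin event $\mathcal B_{\delta_p,\delta_M}$ of \eqref{eq:app_margin_event}. Note first that no boundary-mass hypothesis is needed here, because $\mathcal A_\infty$ is an event determined by $H$ alone. This gives
\[
|P_{\mathrm{accept}}(N)-P_{\mathrm{accept}}(\infty)|
=\bigl|\mathbb E_H\bigl[\mathbb P(\mathcal A_N\mid H)-\mathbf 1_{\mathcal A_\infty}(H)\bigr]\bigr|
\le \mathbb E_H\bigl|\mathbb P(\mathcal A_N\mid H)-\mathbf 1_{\mathcal A_\infty}(H)\bigr|.
\]
The integrand is at most $1$, so its contribution on $\mathcal B_{\delta_p,\delta_M}$ is bounded by $\mathbb P_H(\mathcal B_{\delta_p,\delta_M})$.

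\textbf{Off the margin event.} On the complement of $\mathcal B_{\delta_p,\delta_M}$ we have $|p(H)-r_{\min}|>\delta_p$ and $|\mu(H)-\overline\upsilon^{\min}_{\mathrm{mt}}|>\delta_M$. Write $E_p=\{|\widehat p_N-p(H)|<\delta_p\}$ and $E_M=\{|\widehat\mu_N-\mu(H)|<\delta_M\}$.

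On $E_p\cap E_M$ the indicator of $\{\widehat p_N\ge r_{\min}\}$ agrees with that of $\{p(H)\ge r_{\min}\}$: if $p(H)>r_{\min}+\delta_p$ then $\widehat p_N>r_{\min}$, and if $p(H)<r_{\min}-\delta_p$ then $\widehat p_N<r_{\min}$. The same reasoning applies to the mean criterion. Hence $\mathbf 1_{\mathcal A_N}=\mathbf 1_{\mathcal A_\infty}$ on $E_p\cap E_M$, and therefore
\[
|\mathbb P(\mathcal A_N\mid H)-\mathbf 1_{\mathcal A_\infty}|\le \mathbb P(E_p^c\mid H)+\mathbb P(E_M^c\mid H).
\]

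\textbf{Concentration.} Conditionally on $H$, the $X_j$ are i.i.d.\ in $\{0,1\}$ and the $M_j$ are i.i.d.\ in $[M_-,M_+]$, by the hypothesis of Appendix~\ref{app:large_batch}. The two-sided Hoeffding inequality then gives
\[
\mathbb P(E_p^c\mid H)\le 2e^{-2N\delta_p^2},\qquad
\mathbb P(E_M^c\mid H)\le 2\exp[-2N\delta_M^2/(M_+-M_-)^2].
\]
Both bounds are uniform in $H$. Integrating over the complement of $\mathcal B_{\delta_p,\delta_M}$, and bounding its probability by one, yields \eqref{eq:app_finite_batch_bound}. Setting $M_-=0$, $M_+=1$ and $\delta_p=\delta_M=\delta$ gives \eqref{eq:app_finite_batch_unit}.

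\textbf{Main obstacle.} The delicate point is measurability and the conditional i.i.d.\ structure. We need $H\mapsto\mathbb P(\mathcal A_N\mid H)$ to be a regular conditional probability so that Hoeffding's inequality can be applied pointwise in $H$. This follows because the organoid outcomes are deterministic, measurable functions of $(H,Z_{bj})$, with the $Z_{bj}$ independent of $H$; the needed continuity comes from Proposition~\ref{prop:lip}. The strict-versus-nonstrict inequalities at the thresholds are handled by the open margins chosen above.
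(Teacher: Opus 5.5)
Your proposal is correct and follows the paper's own argument: off the margin event $\mathcal B_{\delta_p,\delta_M}$, the finite and limiting decisions coincide whenever $|\widehat p_N-p(H)|<\delta_p$ and $|\widehat\mu_N-\mu(H)|<\delta_M$, and conditional Hoeffding bounds, uniform in $H$, are then averaged over $H$. Your write-up spells out the threshold case analysis and the $\mathbb P_H(\mathcal B_{\delta_p,\delta_M})$ contribution that the paper's two-line proof leaves implicit; the appeal to Proposition~\ref{prop:lip} for measurability is not needed, because conditional i.i.d.\ structure is already a hypothesis of Appendix~\ref{app:large_batch}.
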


\begin{proof}
Outside $\mathcal B_{\delta_p,\delta_M}$, the finite and limiting decisions
agree whenever
$|\widehat p_N-p(H)|<\delta_p$ and
$|\widehat\mu_N-\mu(H)|<\delta_M$. Apply conditional Hoeffding inequalities
and then average over $H$.
\end{proof}

\begin{corollary}[Batch-size criterion]
For $M_j\in[0,1]$ and prescribed $\eta\in(0,1)$,
\[
N\geq\frac1{2\delta^2}\log\frac4\eta
\quad\Longrightarrow\quad
|P_{\mathrm{accept}}(N)-P_{\mathrm{accept}}(\infty)|
\leq\mathbb P_H(\mathcal B_{\delta,\delta})+\eta.
\]
If $\mathbb P_H(\mathcal B_{\delta,\delta})\leq C_{\mathrm{ac}}\delta^\alpha$,
then $\delta_N=\sqrt{\log N/N}$ gives
\[
|P_{\mathrm{accept}}(N)-P_{\mathrm{accept}}(\infty)|
\leq C_{\mathrm{ac}}\left(\frac{\log N}{N}\right)^{\alpha/2}+4N^{-2}.
\]
\end{corollary}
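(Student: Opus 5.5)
The statement is the batch-size criterion corollary. I would derive it directly from the finite-batch bound \eqref{eq:app_finite_batch_unit} of Proposition~\ref{prop:app_finite_batch}, which applies because $M_j\in[0,1]$.

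\emph{First claim.} Set $\delta_p=\delta_M=\delta$ in \eqref{eq:app_finite_batch_unit}. The exponential term satisfies $4e^{-2N\delta^2}\le\eta$ if and only if $2N\delta^2\ge\log(4/\eta)$, that is, if and only if
\[
N\ge\frac{1}{2\delta^2}\log\frac4\eta .
\]
This is exactly the stated hypothesis, so the first inequality follows immediately. Nothing beyond monotonicity of the exponential is needed. The hypothesis $\eta\in(0,1)$ only guarantees $\log(4/\eta)>0$.

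\emph{Second claim.} Here I would take $\delta=\delta_N=\sqrt{\log N/N}$, which requires $N\ge2$ so that $\delta_N>0$. The two terms behave as follows.
\begin{itemize}
\item The margin term is bounded by the margin hypothesis: $\mathbb P_H(\mathcal B_{\delta_N,\delta_N})\le C_{\mathrm{ac}}\delta_N^\alpha=C_{\mathrm{ac}}(\log N/N)^{\alpha/2}$.
\item The concentration term becomes $4e^{-2N\delta_N^2}=4e^{-2\log N}=4N^{-2}$.
\end{itemize}
Adding the two bounds gives the stated rate.

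\emph{Main obstacle.} There is one point to check: the margin hypothesis may be intended only for small $\delta$. The claimed bound is trivial for finitely many $N$, since the left side is at most $1$ and the constant can absorb those cases. So I would either read the hypothesis as holding for all $\delta>0$, or note that $\delta_N\to0$ and the finitely many exceptional $N$ are absorbed. Otherwise the argument is a direct substitution.
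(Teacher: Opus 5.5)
Your argument is correct and is exactly the intended one: the paper gives no separate proof, and the corollary follows by substituting $\delta_p=\delta_M=\delta$ into \eqref{eq:app_finite_batch_unit}, and then $\delta=\delta_N$, for which $4e^{-2N\delta_N^2}=4N^{-2}$. On your side remark, absorbing small $N$ into the constant would change $C_{\mathrm{ac}}$, so you should use your first reading, which costs little: $\delta_N^2=\log N/N\le e^{-1}$, so the margin hypothesis is needed only for $\delta\le e^{-1/2}$.
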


\subsection{Shared and independent variance}

\begin{proposition}[Exact variance decomposition]
\label{prop:app_batch_variance}
Conditional independence gives
\begin{align}
\operatorname{Var}(\widehat p_N)
&=\operatorname{Var}_H(p(H))
+\frac1N\mathbb E_H[p(H)(1-p(H))],\\
\operatorname{Var}(\widehat\mu_N)
&=\operatorname{Var}_H(\mu(H))
+\frac1N\mathbb E_H[\operatorname{Var}(M_1\mid H)],\\
\operatorname{Cov}(\widehat p_N,\widehat\mu_N)
&=\operatorname{Cov}_H(p(H),\mu(H))
+\frac1N\mathbb E_H[\operatorname{Cov}(X_1,M_1\mid H)].
\end{align}
\end{proposition}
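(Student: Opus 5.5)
The proof is the law of total variance (and total covariance), conditioning on the shared effects $H$. For any integrable $Y$, $\operatorname{Var}(Y)=\operatorname{Var}_H(\mathbb E[Y\mid H])+\mathbb E_H[\operatorname{Var}(Y\mid H)]$, and likewise $\operatorname{Cov}(Y,Y')=\operatorname{Cov}_H(\mathbb E[Y\mid H],\mathbb E[Y'\mid H])+\mathbb E_H[\operatorname{Cov}(Y,Y'\mid H)]$. I will apply these with $Y,Y'\in\{\widehat p_N,\widehat\mu_N\}$. Integrability is automatic, since $X_j\in\{0,1\}$ and $M_j\in[0,1]$ make every quantity bounded.

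The first step computes the conditional means. Conditional on $H$ the pairs $(X_j,M_j)$ are i.i.d., because $X_j$ is a measurable function of $(M_j,D_j)$. Linearity then gives $\mathbb E[\widehat p_N\mid H]=p(H)$ and $\mathbb E[\widehat\mu_N\mid H]=\mu(H)$. Substituting into the between-group term produces $\operatorname{Var}_H(p(H))$, $\operatorname{Var}_H(\mu(H))$ and $\operatorname{Cov}_H(p(H),\mu(H))$.

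The second step computes the within-group terms. By conditional independence, the cross terms with $i\neq j$ vanish in $\operatorname{Cov}(\frac1N\sum_i X_i,\frac1N\sum_j M_j\mid H)$. Identical distribution then leaves $\frac1N\operatorname{Cov}(X_1,M_1\mid H)$, and in the same way $\frac1N\operatorname{Var}(M_1\mid H)$ and $\frac1N\operatorname{Var}(X_1\mid H)$. Since $X_1$ is Bernoulli given $H$, $\operatorname{Var}(X_1\mid H)=p(H)(1-p(H))$. Taking $\mathbb E_H$ yields the three $1/N$ terms.

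The argument has no real obstacle. The only point to check is that the vanishing of the cross terms uses conditional rather than unconditional independence: unconditionally the $X_j$ are correlated through $H$. That correlation is precisely what the non-averaging term $\operatorname{Var}_H(p(H))$ records, so the decomposition must be carried out conditionally, not marginally.
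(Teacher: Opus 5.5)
Your proposal is correct and follows the paper's own proof, which simply invokes the laws of total variance and total covariance conditional on $H$. You fill in the details the paper leaves implicit: the conditional means $p(H)$ and $\mu(H)$, the vanishing of the $i\neq j$ cross terms under conditional independence, and the Bernoulli identity $\operatorname{Var}(X_1\mid H)=p(H)(1-p(H))$.
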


\begin{proof}
Apply the laws of total variance and total covariance.
\end{proof}

\section{Formal three-dimensional reduction}
\label{app:formal_3d}

This section is a formal matched-asymptotic expansion for three-dimensional
spherical targets.

Fix $m$, suppress it where unambiguous, and set
\[
a(s)=s+\gamma_m,\quad
\chi_j(s)=s+\gamma^d_{jm}+\bar\gamma_{jm},\quad
g_j(s)=\frac{s+\bar\gamma_{jm}}{\chi_j(s)},\quad
K_j(s)=\kappa'_{jm}g_j(s).
\]
Let
$\mathcal U_j=B_{\varepsilon\ell_j}(\mathbf x_j)\subset\mathbb R^3$ and
$\kappa_{jm}=\kappa'_{jm}/\varepsilon$, with $O(1)$ separation. The transformed
problem is
\begin{subequations}
\label{eq:app_3d_problem}
\begin{align}
D_m\Delta\widetilde u_m-a(s)\widetilde u_m
&=-f_m^{\mathrm{src}}&&\text{in }\Omega_\varepsilon,\\
D_m\partial_{\mathbf n}\widetilde u_m&=0&&\text{on }\partial\Omega,\\
D_m\partial_{\mathbf n_j}\widetilde u_m
&=\frac{K_j(s)}{\varepsilon}\widetilde u_m
&&\text{on }\partial\mathcal U_j.
\end{align}
\end{subequations}
For $q_{jm,0}=0$,
\begin{equation}
\widetilde{\mathcal J}_{jm}^{\mathrm{full}}(s)
=\frac{\bar\gamma_{jm}\kappa'_{jm}}{\varepsilon\chi_j(s)}
\int_{\partial\mathcal U_j}\widetilde u_m\,\mathrm dS.
\label{eq:app_3d_exact_flux}
\end{equation}

\subsection{Outer monopoles}

Let
\begin{equation}
D_m\Delta_{\mathbf x}G_m-a(s)G_m=-\delta_{\mathbf z},
\qquad \partial_{\mathbf n}G_m=0\quad\text{on }\partial\Omega,
\label{eq:app_3d_green}
\end{equation}
and
\begin{equation}
\Gamma_m(\mathbf x,s)=\int_\Omega
G_m(\mathbf x,\mathbf z;s)f_m^{\mathrm{src}}(\mathbf z,s)
\,\mathrm d\mathbf z.
\label{eq:app_3d_forcing}
\end{equation}
In three dimensions,
\begin{equation}
G_m(\mathbf x,\mathbf x_j;s)=
\frac1{4\pi D_m|\mathbf x-\mathbf x_j|}
+R_m(\mathbf x_j,\mathbf x_j;s)+O(|\mathbf x-\mathbf x_j|).
\label{eq:app_3d_green_local}
\end{equation}
Introduce $Q_{jm}(s)$ through
\begin{equation}
\widetilde u_m^{\mathrm{out}}(\mathbf x,s)=
\Gamma_m(\mathbf x,s)-4\pi D_m\varepsilon
\sum_{k=1}^NQ_{km}(s)G_m(\mathbf x,\mathbf x_k;s).
\label{eq:app_3d_outer}
\end{equation}
For $\mathbf x=\mathbf x_j+\varepsilon\mathbf y$ and $\rho=|\mathbf y|$,
\begin{align}
\widetilde u_m^{\mathrm{out}}
&=-\frac{Q_{jm}}\rho+B_{jm}
+\varepsilon\rho\,\mathbf h_{jm}\cdot\widehat{\mathbf y}
+O(\varepsilon^2\rho^2),
\label{eq:app_3d_outer_local}\\
B_{jm}
&=\Gamma_m(\mathbf x_j,s)-4\pi D_m\varepsilon\left[
R_m(\mathbf x_j,\mathbf x_j;s)Q_{jm}
+\sum_{k\ne j}G_m(\mathbf x_j,\mathbf x_k;s)Q_{km}\right].
\label{eq:app_3d_Bj}
\end{align}

\subsection{Inner Robin capacitance}

The leading inner problem is
\begin{equation}
\Delta_{\mathbf y}U^{(0)}_{jm}=0,qquad
D_m\partial_\rho U^{(0)}_{jm}=K_j(s)U^{(0)}_{jm}
\quad\text{at }\rho=\ell_j,qquad
U^{(0)}_{jm}\to B_{jm}\quad(\rho\to\infty).
\label{eq:app_3d_inner_problem}
\end{equation}
Its solution is
\begin{equation}
U^{(0)}_{jm}(\rho,s)=B_{jm}(s)
\left(1-\frac{\Lambda_{jm}(s)}\rho\right),
\qquad
\Lambda_{jm}(s)=\frac{K_j(s)\ell_j^2}{D_m+K_j(s)\ell_j}.
\label{eq:app_3d_lambda}
\end{equation}
Matching the $\rho^{-1}$ terms gives
\begin{equation}
Q_{jm}(s)=\Lambda_{jm}(s)B_{jm}(s).
\label{eq:app_3d_Q_match}
\end{equation}
The limits are
\begin{align}
\Lambda_{jm}&=\frac{K_j\ell_j^2}{D_m}
-\frac{K_j^2\ell_j^3}{D_m^2}
+O\!\left(\frac{K_j^3\ell_j^4}{D_m^3}\right)
&&\text{if }K_j\ell_j/D_m\ll1,\\
\Lambda_{jm}&=\ell_j-\frac{D_m}{K_j}
+O\!\left(\frac{D_m^2}{K_j^2\ell_j}\right)
&&\text{if }K_j\ell_j/D_m\gg1.
\end{align}

\subsection{Interaction system}

Define
\[
\bm\Lambda_m=\operatorname{diag}(\Lambda_{1m},\ldots,\Lambda_{Nm})
\]
and
\begin{equation}
(\bm{\mathcal G}^{(3)}_m)_{jk}=
\begin{cases}
G_m(\mathbf x_j,\mathbf x_k;s),&j\ne k,\\[1mm]
R_m(\mathbf x_j,\mathbf x_j;s),&j=k.
\end{cases}
\label{eq:app_3d_G_matrix}
\end{equation}
Then
\begin{equation}
\left[\bm\Lambda_m(s)^{-1}
+4\pi D_m\varepsilon\bm{\mathcal G}^{(3)}_m(s)\right]
\mathbf Q_m(s)=\bm\Gamma_m(s).
\label{eq:app_3d_matrix_system}
\end{equation}
Equivalently,
\begin{align}
\mathbf Q_m
&=\bm\Lambda_m\left[
\mathbf I+4\pi D_m\varepsilon\bm{\mathcal G}^{(3)}_m\bm\Lambda_m
\right]^{-1}\bm\Gamma_m,
\label{eq:app_3d_Q_resummed}\\
&=\bm\Lambda_m\bm\Gamma_m
-4\pi D_m\varepsilon\bm\Lambda_m\bm{\mathcal G}^{(3)}_m
\bm\Lambda_m\bm\Gamma_m+O(\varepsilon^2).
\label{eq:app_3d_Q_expansion}
\end{align}

\subsection{Productive internalization flux}

Because $|\partial\mathcal U_j|=4\pi\varepsilon^2\ell_j^2$,
\begin{equation}
\widetilde{\mathcal J}^{\mathrm{red}}_{jm}(s)
\sim4\pi D_m\varepsilon
\frac{\bar\gamma_{jm}}{s+\bar\gamma_{jm}}Q_{jm}(s).
\label{eq:app_3d_productive_flux}
\end{equation}
Also,
\begin{equation}
\int_{\partial\mathcal U_j}D_m\partial_{\mathbf n_j}
\widetilde u_m\,\mathrm dS\sim4\pi D_m\varepsilon Q_{jm}.
\label{eq:app_3d_net_flux}
\end{equation}

\subsection{First angular correction}

The first harmonic is
\begin{equation}
U^{(1)}_{jm}=(\mathbf h_{jm}\cdot\widehat{\mathbf y})
\left(\rho+c_{jm}(s)\rho^{-2}\right),qquad
c_{jm}(s)=\ell_j^3
\frac{D_m-K_j(s)\ell_j}{2D_m+K_j(s)\ell_j}.
\label{eq:app_3d_dipole}
\end{equation}
Its surface mean and mean radial derivative vanish. Thus it makes no integrated
flux contribution. Formal bookkeeping places the first omitted integrated-flux
term at absolute order $O(\varepsilon^3)$ for spheres; this is not a controlled
remainder.

\subsection{Non-spherical targets}

For a reference shape $\mathcal B_j$, define the Robin capacitance
$\mathcal C^R_{jm}(s)$ by
\begin{subequations}
\begin{align}
\Delta W_{jm}&=0&&\text{in }\mathbb R^3\setminus\overline{\mathcal B_j},\\
D_m\partial_{\mathbf n}W_{jm}&=K_j(s)W_{jm}
&&\text{on }\partial\mathcal B_j,\\
W_{jm}(\mathbf y)&=1-\frac{\mathcal C^R_{jm}(s)}{|\mathbf y|}
+o(|\mathbf y|^{-1})&&\text{as }|\mathbf y|\to\infty,
\end{align}
\end{subequations}
where $\mathbf n$ points from $\mathcal B_j$ into the exterior. Replacing
$\Lambda_{jm}$ by $\mathcal C^R_{jm}$ gives the formal non-spherical reduction.

\section{Stochastically gated accessibility}
\label{app:gating}
Surface availability is a source of partial reactivity distinct from encounter
history. Let $G_{jm}(t)\in\{o,c\}$ switch at rates $\alpha_{jm}$, $\beta_{jm}$
with $\omega^g_{jm}=\alpha_{jm}+\beta_{jm}$ and stationary open fraction
$\pi^g_{o,jm}=\beta_{jm}/\omega^g_{jm}$. Gating acts \emph{before} adsorption
whereas $\sigma_{jm}$ acts after it, so the two must not be merged; in
particular, multiplying an ungated rate by the stationary open fraction is
incorrect, because after a rejected encounter the gate is known to be closed and
the next cycle retains that information. The renewal representation gives the
shift identity
$\widetilde f_{g\mid g_0}(s)=\pi^g_g\widetilde f(s)
+(\delta_{gg_0}-\pi^g_g)\widetilde f(s+\omega^g_{jm})$. Under symmetric
switching, $\alpha_{jm}=\beta_{jm}=\omega$ and
$\omega^g_{jm}=2\omega$, and the associated Dirichlet-to-Neumann combination is
$\mathcal L^g_{m,s,\omega}
=\tfrac12(\mathcal L_{m,s}+\mathcal L_{m,s+2\omega})$. Thus the transformed
shift is governed by the relaxation rate $\alpha_{jm}+\beta_{jm}$, not by an
individual switching rate; this is the form in
\cite{bressloff2026renewal}, and the two singular limits --- a Robin law only
under a fast-switching, rare-opening scaling, and an MFPT that diverges for an
initially closed target while tending to twice the ungated value for an
initially open one --- are given in \cite{bressloff2026renewal}. Composing
gating with the downstream model by replacing $\widetilde{\mathbf J}_m$ with the
successful open-gate transform while discarding gate--residence correlations is
asymptotically justified when the gate re-equilibrates over a residence time,
$\omega^g_{jm}\langle\tau\rangle_{jm}\gg1$; it is not an exact identity at a
finite switching rate. No result in this paper uses
a gated calculation.

\section{Proofs of Propositions}
\label{app:proofs}

Proposition~\ref{prop:red} is derived in Appendix~\ref{app:matched2d}; its
$O(\varepsilon^2)$ remainder has precisely the conditional scope stated in
Proposition~\ref{prop:app_flux_error}.

Proposition~\ref{prop:pt} follows from the zero-mode calculation in
Appendix~\ref{app:zeromode} and the renewal resolvents in
Appendix~\ref{app:renewal_resolvents}.

\begin{proof}[Proof of Corollary~\ref{cor:remix}]
At $\eta=1$, $\mathbf I-\mathbf P^{\mathrm{cont}}\bm\Sigma
=\bm W-\mathbf E\bm\Sigma=(\mathbf I-\mathbf E\bm\Sigma\bm W^{-1})\bm W$, and
$\max_j\sigma_j\le1-\delta$ gives $\|\bm W^{-1}\|_1\le\delta^{-1}$, so
$\bm\pi^{(1)}=(\mathbf I-\mathbf E\bm\Sigma\bm W^{-1})^{-1}\overline{\bm\pi}
=\overline{\bm\pi}+\mathbf E\bm\Sigma\bm W^{-1}\overline{\bm\pi}+O(\nu^2)$; the
constant is controlled by
$\|\mathbf E\bm\Sigma\bm W^{-1}\|_1\le\|\mathbf E\|_1\max_j\sigma_j/(1-\sigma_j)$.
At $\eta=0$, $\mathbf P^{(0)}=\overline{\bm\pi}\mathbf1^\top$ is rank one and
Sherman--Morrison gives
$(\mathbf I-\overline{\bm\pi}\mathbf1^\top\bm\Sigma)^{-1}\overline{\bm\pi}
=\overline{\bm\pi}/(1-\mathbf1^\top\bm\Sigma\overline{\bm\pi})$, whence
\eqref{eq:endpoints}. If $\bm\Sigma=\sigma\mathbf I$ then at $\mathbf E=\mathbf0$
one has $\mathbf P^{(\eta)}\overline{\bm\pi}=\overline{\bm\pi}$ for every
$\eta$, so $(\mathbf I-\sigma\mathbf P^{(\eta)})^{-1}\overline{\bm\pi}
=\overline{\bm\pi}/(1-\sigma)$ and multiplication by $\bm W=(1-\sigma)\mathbf I$
returns $\overline{\bm\pi}$; restoring $\mathbf E=O(\nu)$ changes this by
$O(\nu)$ uniformly in $\eta$. Under Assumption~\ref{ass:moments} one may
differentiate \eqref{eq:cont} at $s=0$; this introduces
$\partial_s\mathbf P^{(\eta)}(0)$, which $\bm W$ does not cancel, giving the
timing statement. The argument shows only that the $\eta$-derivative of the
timing functional is generically nonzero: it is a linear functional of
$\partial_s\mathbf P^{\mathrm{cont}}(0)-\partial_s\mathbf
P^{\mathrm{reset}}(0)$, which can vanish for particular geometries and
residence laws, so exceptional cancellations are not excluded.
\end{proof}

\begin{proof}[Proof of Proposition~\ref{prop:lip}]
Fix $m$ and set
$H=L^2(\Omega_\varepsilon)\times\prod_jL^2(\partial\mathcal U_j)$ and
$V=H^1(\Omega_\varepsilon)\times\prod_jL^2(\partial\mathcal U_j)$.
For $z=(u,(q_j))$ and $w=(v,(r_j))$, the homogeneous problem is generated by
the form
\[
\mathfrak a(z,w)=D_m(\nabla u,\nabla v)+\gamma_m(u,v)
+\sum_j\!\int_{\partial\mathcal U_j}
\{(\kappa_{jm}u-\gamma^d_{jm}q_j)v
-(\kappa_{jm}u-(\gamma^d_{jm}+\bar\gamma_{jm})q_j)r_j\}.
\]
The trace theorem and Young's inequality give
$|\mathfrak a(z,w)|\le C\|z\|_V\|w\|_V$ and
$\mathfrak a(z,z)+\omega\|z\|_H^2\ge c\|z\|_V^2$ for suitable
$c>0$, $\omega\ge0$. The associated operator therefore generates an analytic
semigroup; Duhamel's formula gives the unique mild solution for
$I_m\in L^1(0,T;L^2)$. Testing with the negative parts yields
\[
\frac12\frac{\mathrm d}{\mathrm dt}
\left(\|u_m^-\|_2^2+\sum_j\|q_{jm}^-\|_2^2\right)
\le C\left(\|u_m^-\|_2^2+\sum_j\|q_{jm}^-\|_2^2\right),
\]
because $I_m\ge0$; Gronwall proves positivity. Testing both equations with
constants and adding the internalized amount gives
\[
\frac{\mathrm d}{\mathrm dt}\left(
B_m^{\mathrm{bulk}}+\sum_jB_{jm}^{\mathrm{surf}}
+\sum_jC_{jm}^{\mathrm{int}}\right)
=\int_{\Omega_\varepsilon}I_m
-\gamma_m\int_{\Omega_\varepsilon}u_m,
\]
which is equivalent to \eqref{eq:massbal}.

For the reduction, write
$\mathbf M_m=\mathbf I+2\pi\nu D_m\bm{\mathcal G}_m+\nu\bm\Psi_m$.
The resolvent identity and the assumed inverse bound imply
\[
\|\mathbf A_m(\boldsymbol\theta)-\mathbf A_m(\boldsymbol\theta_0)\|
\le C\|\boldsymbol\theta-\boldsymbol\theta_0\|.
\]
Equation~\eqref{eq:fluxred} transfers this estimate to the productive flux.
The vector field of \eqref{eq:transformed_problem} is Lipschitz on the invariant
set $\mathcal K$, so Gronwall gives the stated output bound. Finally, if
$\boldsymbol\theta_n\to\boldsymbol\theta$ almost surely, output continuity and
\eqref{eq:nullbdry} imply convergence of the acceptance indicators; dominated
convergence proves continuity of their probabilities.
\end{proof}

\section{Estimators and reproducibility ledger}
\label{app:mc}
For LHS each marginal is stratified into $N_{\mathrm{LHS}}$ equiprobable bins
($N_{\mathrm{LHS}}=4096$ here), one point drawn per bin and coordinates permuted
independently. We write $N_{\mathrm{MC}}$ for the separate Sobol base-matrix
size, which is not the same number. PRCC uses ordinary least-squares
residuals of the rank-transformed variables. For Sobol estimation with base
matrices $\mathbf A,\mathbf B$ and hybrids $\mathbf A^{(i)}_{\mathbf B}$ \cite{saltelli2010variance,jansen1999analysis}:
\[
\widehat S_i=\frac{1}{N_{\mathrm{MC}}\widehat V}\sum_n f^n_B(f^n_{AB_i}-f^n_A),
\qquad
\widehat S_{T_i}=\frac{1}{2N_{\mathrm{MC}}\widehat V}\sum_n(f^n_A-f^n_{AB_i})^2;
\]
with $p=7$ and $N_{\mathrm{MC}}=2^{15}$ this is $294\,912$, and $327\,680$ for
the eight-input audit.
The computation is reconstructed in the following order. (i) Solve the
four-by-four Bessel system and evaluate the reduced counterpart for
Table~\ref{tab:verify}. (ii) Evaluate \eqref{eq:series} and \eqref{eq:ewald},
then \eqref{eq:amp}, keeping conservative and lossy rows separate; for lossy
rows use \eqref{eq:ewaldloss}, or a Bessel image sum with
$\|\mathbf k\|_\infty\gtrsim13\sqrt{D/\gamma}$, and record the cutoff actually
used. (iii) Recover $\mathcal J_j(t)$ and $U_j(T)$. (iv) Integrate \eqref{eq:state} with the
constants and initial state of \eqref{eq:params}. (v) For \S\ref{subsec:retention}, compute both the two-target and the two single-target masses at each $d$ before forming \eqref{eq:comp}. (vi) For
uncertainty, use the ranges, designs and seeds printed in
\S\ref{subsec:sens}. (vii) For the null model, use the layout protocol of
\S\ref{subsec:nullspread} verbatim --- centre law, boundary clearance $0.10$,
hard-core radius $0.12$ applied to the whole configuration, independent port law,
PCG64 seed $2024$, draw order (centres, rejections, port), $2000$ draws, the source-clearance resampling at $d_{\mathrm{src}}=0.10$, and $\Psi=0.75$ common to both dosing
geometries --- and normalize the uniform source to the same total injection rate
as the port before rescaling both flux vectors to unit mean; bootstrap with
$4000$ resamples at seed $7$. The minimum record for every exported curve is
\[
(\text{regime},\gamma,\varepsilon,D,\ell,\kappa',\gamma^d,\bar\gamma,\eta,
\mathbf x_0,\{\mathbf x_j\},a_{\mathrm{dose}},t_{\mathrm d},T,N_b,
s_{\mathrm{line}},s_{\mathrm{batch}},s_{\mathrm{proc}},\text{seed}),
\]
with inapplicable entries recorded as ``not used'' rather than silently
defaulted.

\begin{table}[htbp]\centering\small
\caption{Exact transformed full PDE versus the reduced model; $R=1$, $r_0=0.5$,
$\ell=1$, $D=1$, $\gamma=0.4$, $\kappa'=1.5$, $\gamma^d=0.5$, $\bar\gamma=1$.
Relative error in per cent.}
\label{tab:verify}
\begin{tabular}{@{}c|rrr|rrr|rrr@{}}
\toprule
&\multicolumn{3}{c|}{$s=0$}&\multicolumn{3}{c|}{$s=1$}&\multicolumn{3}{c}{$s=5$}\\
$\varepsilon$&full&reduced&err.&full&reduced&err.&full&reduced&err.\\
\midrule
$0.10$&0.673558&0.672974&$-0.0868$&0.199489&0.199157&$-0.1662$&0.028155&0.028081&$-0.2647$\\
$0.05$&0.616341&0.616206&$-0.0220$&0.167950&0.167876&$-0.0441$&0.021683&0.021665&$-0.0840$\\
$0.02$&0.554407&0.554387&$-0.0036$&0.139022&0.139012&$-0.0074$&0.016642&0.016639&$-0.0158$\\
\bottomrule
\end{tabular}
\end{table}

\end{appendices}

\bibliography{1references}

\end{document}